\documentclass[11pt]{article}
\usepackage[left=3.5cm,right=3.5cm,top=3cm,bottom=3cm]{geometry}
\usepackage{amsmath}
\usepackage{amssymb}
\usepackage{amsthm}
\usepackage{bm}
\usepackage{enumerate}
\usepackage{mathrsfs}
\usepackage{stmaryrd}

\usepackage{color}

\theoremstyle{definition}
\newtheorem{definition}{Definition}[section]
\newtheorem{notation}[definition]{Notation}
\newtheorem{example}[definition]{Example}
\newtheorem{assumption}[definition]{Assumption}

\theoremstyle{plain}
\newtheorem{proposition}[definition]{Proposition}
\newtheorem{theorem}[definition]{Theorem}
\newtheorem{lemma}[definition]{Lemma}
\newtheorem{corollary}[definition]{Corollary}

\theoremstyle{remark}
\newtheorem{remark}[definition]{Remark}

\numberwithin{equation}{section}

\newcommand{\E}{\mathbb{E}}
\newcommand{\N}{\mathbb{N}}
\renewcommand{\P}{\mathbb{P}}
\newcommand{\Q}{\mathbb{Q}}
\newcommand{\R}{\mathbb{R}}

\newcommand{\cB}{\mathcal{B}}
\newcommand{\cC}{\mathcal{C}}

\newcommand{\cE}{\mathcal{E}}
\newcommand{\cF}{\mathcal{F}}

\newcommand{\cH}{\mathcal{H}}

\newcommand{\cL}{\mathcal{L}}
\newcommand{\cM}{\mathcal{M}}
\newcommand{\cP}{\mathcal{P}}
\newcommand{\cQ}{\mathcal{Q}}
\newcommand{\cS}{\mathcal{S}}
\newcommand{\cU}{\mathcal{U}}

\newcommand{\cY}{\mathcal{Y}}

\newcommand{\rd}{\mathrm{d}}
\newcommand{\vp}{\varphi}
\newcommand{\pa}{\partial}
\newcommand{\bY}{\mathbf{Y}}
\newcommand{\sC}{\mathscr{C}}
\newcommand{\sD}{\mathscr{D}}
\newcommand{\bmz}{\bm{\zeta}}
\newcommand{\bme}{\bm{\eta}}
\newcommand{\frf}{\mathfrak{f}}
\newcommand{\tr}{\textrm{\normalfont{tr}}}
\newcommand{\lmin}{\lambda_{\mathrm{min}}}
\newcommand{\lmax}{\lambda_{\mathrm{max}}}
\newcommand{\Sm}{\cS_+^m}
\newcommand{\deta}{\dot{\eta}}
\newcommand{\Lipb}{\textrm{\textnormal{Lip}}_b}
\newcommand{\ver}[1]{{\left\vert\kern-0.25ex\left\vert\kern-0.25ex\left\vert #1 \right\vert\kern-0.25ex\right\vert\kern-0.25ex\right\vert}}
\newcommand{\simplex}{\Delta_{[0,T]}}

\newcommand{\opHol}{\frac{1}{p}\textrm{\textnormal{-H\"ol}}}
\newcommand{\tpHol}{\frac{2}{p}\textrm{\textnormal{-H\"ol}}}
\newcommand{\pvar}{p}
\newcommand{\ptvar}{\frac{p}{2}}
\newcommand{\pvarst}{p;[s,t]}
\newcommand{\pvartT}{p;[t,T]}
\newcommand{\pvarzT}{p;[0,T]}
\newcommand{\pvarzt}{p;[0,t]}
\newcommand{\pvarhr}{p;[h,r]}
\newcommand{\ptvarst}{\frac{p}{2};[s,t]}
\newcommand{\ptvartT}{\frac{p}{2};[t,T]}
\newcommand{\ptvarzT}{\frac{p}{2};[0,T]}
\newcommand{\ptvarzt}{\frac{p}{2};[0,t]}
\newcommand{\ptvarrt}{\frac{p}{2};[r,t]}
\newcommand{\onevartT}{1;[t,T]}
\newcommand{\onevarzt}{1;[0,t]}
\newcommand{\onevarhr}{1;[h,r]}
\newcommand{\onevarst}{1;[s,t]}
\newcommand{\onevarrt}{1;[r,t]}
\newcommand{\Cpvar}{\cC^{p\textrm{\textnormal{-var}}}}
\newcommand{\Cptvar}{\cC^{\frac{p}{2}\textrm{\textnormal{-var}}}}
\newcommand{\Cqtvar}{\cC^{\frac{q}{2}\textrm{\textnormal{-var}}}}
\newcommand{\gCpvar}{\cC^{0\textrm{\textnormal{,}}p\textrm{\textnormal{-var}}}}
\newcommand{\gCptvar}{\cC^{0\textrm{\textnormal{,}}\frac{p}{2}\textrm{\textnormal{-var}}}}

\DeclareMathOperator*{\esssup}{ess\,sup}

\DeclareMathOperator*{\argmin}{arg\,min}
\DeclareMathOperator*{\sgn}{sgn}

\begin{document}

\title{Pathwise Stochastic Control with Applications to Robust Filtering}
\author{Andrew L. Allan and Samuel N. Cohen\\
Mathematical Institute, University of Oxford\\
andrew.allan\hspace{-0.6pt}{\fontfamily{ptm}\selectfont @}\hspace{-0.4pt}maths.ox.ac.uk\\
samuel.cohen\hspace{-0.6pt}{\fontfamily{ptm}\selectfont @}\hspace{-0.4pt}maths.ox.ac.uk}
\date{\today}
\maketitle

\vspace{-12pt}

\begin{abstract}
We study the problem of pathwise stochastic optimal control, where the optimization is performed for each fixed realisation of the driving noise, by phrasing the problem in terms of the optimal control of rough differential equations. We investigate the degeneracy phenomenon induced by directly controlling the coefficient of the noise term, and propose a simple procedure to resolve this degeneracy whilst retaining dynamic programming. As an application, we use pathwise stochastic control in the context of stochastic filtering to construct filters which are robust to parameter uncertainty, demonstrating an original application of rough path theory to statistics.

\vspace{5pt}
Keywords: stochastic control, rough paths, rough HJB equation, stochastic filtering, parameter uncertainty.

MSC 2010: 60H99, 93E20, 93E11.
\end{abstract}

\section{Introduction}

Stochastic optimal control is a classical optimization problem with numerous applications, from optimal liquidation and portfolio selection in mathematical finance to various problems in production planning, engineering and biology. Here one typically has a stochastic differential equation (SDE) of the form
\begin{equation}\label{eq:introcontSDE}
\rd X_s = b(X_s,\gamma_s)\,\rd s + \sigma(X_s,\gamma_s)\,\rd W_s, \qquad s \in [t,T],
\end{equation}
with an initial condition $X_t = x$, where $W$ is a Brownian motion and $\gamma$ is an adapted control process, and the goal is to minimize (resp.~maximize) a cost (resp.~reward) functional of the form
$$J(t,x;\gamma) = \E\bigg[\int_t^Tf(X_s,\gamma_s)\,\rd s + g(X_T)\bigg]$$
over all possible choices of the control $\gamma$. The resolution of this problem is by now well understood---two primary approaches being that of the Pontryagin stochastic maximum principle and Bellman's principle of optimality (or dynamic programming principle), which allows one to characterise the value function of the control problem, defined by $v(t,x) = \inf_\gamma J(t,x;\gamma)$, as the unique solution of a Hamilton--Jacobi--Bellman (HJB) partial differential equation (PDE).

In their 1998 paper \cite{LionsSouganidis1998}, Lions and Souganidis considered a variant of this problem, known as `pathwise stochastic control', where the optimization is performed pathwise. In other words, one considers controlling the solution of an equation of the form \eqref{eq:introcontSDE} for each individual realisation of the Brownian motion $W$. Moreover, they suggest that in this case the value function should satisty a `stochastic HJB equation'. Indeed, at least in the case when $\sigma$ does not depend on the control $\gamma$, if we pretend for the moment that the paths of $W$ were smooth, then, at least formally, the classical theory leads one to derive a stochastic PDE of the form
\begin{equation}\label{eq:stochHJBintro}
-\rd v - \inf_\gamma\big\{b\cdot\nabla v + f\big\}\,\rd t - \sigma\cdot\nabla v\,\rd W = 0,
\end{equation}
with $v(T,\cdot) = g$.

The notion of pathwise stochastic control actually goes back at least as far as the work of Davis and Burstein \cite{DavisBurstein1991,DavisBurstein1992}, who note that pathwise control is actually equivalent to the classical stochastic control setting if one allows for anticipative controls, leading to the conclusion that the difference between classical and pathwise control boils down to nonanticipativity of the controls. In this view, pathwise control can be thought of as performing optimal control with the benefit of complete knowledge of both the past and future realisations of the stochastic noise. On the other hand, as shown for instance by Rogers \cite{Rogers2007}, pathwise control can also be used to obtain duality results for classical (nonanticipative) stochastic control, thus providing an alternative approach for numerical computations.

Since pathwise control entails the optimization of a stochastic system path by path, it is natural to fix such an (arbitrary) path and proceed to analyse the resulting deterministic problem. This invites a pathwise interpretation of the stochastic integral appearing in the controlled dynamics. The strategy followed by Buckdahn and Ma \cite{BuckdahnMa2007} circumnavigates such a technical requirement, by instead employing a Doss--Sussmann-style transformation to convert the problem into a more standard setting of `wider-sense control problem', allowing them to establish their value function as the unique stochastic viscosity solution of the associated HJB equation. A more direct approach, avoiding such an ad hoc change of variables, requires one to utilize a pathwise approach to stochastic integration.

One such deterministic approach to integration against paths of low regularity is provided by rough path theory, introduced by Lyons \cite{Lyons1998}. The basic idea here is that the notion of integration can be extended in a consistent way to paths of lower regularity such that strong stability results concerning continuity of the integration map with respect to the driving `rough path' hold, but one requires extra information about the driving signal than is expressed in the path alone. Such paths, $\zeta$ say, must therefore be `enhanced' by a suitable `second order' process $\zeta^{(2)}$ which captures this missing information. The addition of the process $\zeta^{(2)}$ is equivalent to considering the L\'evy area of the path $\zeta$, and corresponds to the addition of the iterated integral $\int_0^\cdot\int_0^r\rd\zeta_s\otimes\rd\zeta_r$, but since this integral does not exist in the classical sense, its value must be postulated, rather than being uniquely determined by the original path $\zeta$.

We note in particular the more recent work of Diehl, Friz and Gassiat \cite{DiehlFrizGassiat2017}, which appears to be the first attempt to apply rough path theory to optimal control, in which the authors consider controlled dynamics of the form
\begin{equation}\label{eq:contdynzetaintro}
\rd X_s = b(X_s,\gamma_s)\,\rd s + \lambda(X_s)\,\rd\bmz_s,
\end{equation}
driven by a geometric rough path $\bmz$. They proceed to both obtain a version of Pontryagin's maximum principle and establish their value function as the unique solution of a `rough HJB equation', and moreover obtain a duality result for the corresponding nonanticipative stochastic control problem. Their results suggest that rough path theory is an ideal tool for the study of pathwise stochastic control---a notion that we will echo in the present work.

Notably however, the existing literature on pathwise control invariably focuses only on the case where the control process appears in the drift term, but does not appear in the coefficient of the noise term (or rough path). That is, the controlled dynamics considered are typically of the general form in \eqref{eq:contdynzetaintro}, where $\lambda$ is not allowed to depend on the control $\gamma$. As observed in Diehl et al.~\cite{DiehlFrizGassiat2017}, the pathwise control problem with full dynamic control (particularly control in the coefficient of the noise term), when stated in the obvious way, turns out to be degenerate, which explains the lack of results in this direction. An indication of this arises when one reruns the formal derivation that led to \eqref{eq:stochHJBintro} in the case where $\lambda$ depends on $\gamma$. In this case the resulting equation exhibits the Brownian motion $W$ \emph{inside} the infimum. At least heuristically, this corresponds to the ability to perfectly optimize over the path of the noise term, which, as we will see, is the source of the degeneracy.

\begin{example}[Insider trading]\label{naiveexample}
Let us suppose that an agent is trading a stock with price composed of a diffusive term with volatility $\sigma$, representing usual market uncertainty, and a deterministic path $\zeta$ which represents some additional information known only to the agent. We suppose that $\zeta \colon [0,T] \to \R$ is a continuous path which has infinite variation on any interval. Denoting the size of the agent's investment by $\gamma$ and their wealth process by $X$, we have the controlled dynamics\footnote{Let us suppose for the moment that we have employed a suitable notion of integration such that the integral against $\zeta$ is well-posed.}
\begin{equation}\label{eq:exampledynamics}
\rd X^{t,x,\gamma}_s = \gamma_s(\sigma\,\rd W_s + \rd\zeta_s), \qquad s \in [t,T],
\end{equation}
with $X^{t,x,\gamma}_t = x$, where $W$ is a standard Brownian motion and $x$ is the agent's initial wealth. We assume that at each time the agent can only hold a finite amount of stock. More precisely, we impose that $\gamma$ takes values in the finite interval $[-\varepsilon,\varepsilon]$, for some $\varepsilon > 0$. The agent's expected terminal wealth is given by the value function
\begin{equation}\label{eq:examplevaluefunc}
v(t,x) = \sup_{\gamma}\E\big[X^{t,x,\gamma}_T\big],
\end{equation}
where the supremum is taken over the collection of progressively measurable $[-\varepsilon,\varepsilon]$-valued processes. In order to study the nature of this problem, let us approximate $\zeta$ by a smooth function $\eta$. The dynamics \eqref{eq:exampledynamics} are then approximated by
$$\rd X^{t,x,\gamma,\eta}_s = \gamma_s(\sigma\,\rd W_s + \deta_s\,\rd s),$$
where $\deta$ denotes the derivative of $\eta$. The resulting control problem is classical. The associated HJB equation is given by
$$\frac{\pa v^\eta}{\pa t} + \sup_{\gamma \in [-\varepsilon,\varepsilon]}\bigg\{\frac{1}{2}\gamma^2\sigma^2\frac{\pa^2v^\eta}{\pa x^2} + \gamma\deta_t\frac{\pa v^\eta}{\pa x}\bigg\} = 0,$$
with terminal condition $v^\eta(T,x) = x$. The solution $v^\eta$ is seen to be
\begin{equation}\label{eq:exHJBsoln}
v^\eta(t,x) = x + \varepsilon\int_t^T|\deta_s|\,\rd s,
\end{equation}
and we infer that the optimal control $\gamma^\ast$ is given by
\begin{equation}\label{eq:exoptcontrol}
\gamma^\ast_t = \varepsilon\sgn(\deta_t).
\end{equation}
Suppose now that we were to repeatedly refine the approximation $\eta$ so that it better captures the fast fluctuations of $\zeta$. Even without giving a precise definition of what we might mean by the limit as $\eta \to \zeta$, it is clear that the solution \eqref{eq:exHJBsoln} should diverge to infinity in this limit. In other words, the original value function, as defined in \eqref{eq:examplevaluefunc}, should be simply given by $v(t,x) = \infty$ whenever $t < T$. Thus, we infer that our original control problem, with the infinite variation signal $\zeta$, is degenerate.
\end{example}

The phenomenon exhibited here is typical for such control problems, where we attempt to control the coefficient of the infinite variation term in the controlled dynamics. The problem in the previous example is that, in contrast to a classical stochastic setting, since the controller can `see' the path $\zeta$ in advance, they can choose controls $\gamma$ with very small, but extremely quick, fluctuations, which allow the solution $X$ to take full advantage of the infinite variation of $\zeta$. Indeed, notice that the sign of the optimal control in \eqref{eq:exoptcontrol} changes at the same rate as that of $\deta$, which varies `infinitely quickly' in the limit as $\eta \to \zeta$.

To paraphrase Diehl et al.~\cite{DiehlFrizGassiat2017}, if the coefficient of the driving signal has enough dependence on the control, and this signal has unbounded variation on any interval, then the controller can drive the solution to reach any point instantly whilst incurring an arbitrarily low cost.

In the current work we investigate this degeneracy phenomenon in more detail, and see how it may be resolved by introducing an artificial cost to penalise the variation of the controls. We will see how this cost may be chosen to ensure that a dynamic programming principle is retained, thus allowing one to recover a setting comparable to that of \cite{DiehlFrizGassiat2017}. As an extension of that paper, we proceed to consider pathwise control with unbounded cost functions and, by obtaining locally uniform bounds on the controls, establish the value function as the unique solution of a rough HJB equation.

Stochastic filtering concerns the problem of estimating the current state of a hidden process from noisy observations, and itself has widespread and important applications, from finance and biology to engineering, defence and aerospace. In their paper \cite{CrisanDiehlFrizOberhauser2013}, Crisan, Diehl, Friz and Oberhauser used rough path theory to resolve an existing open problem in the theory of `robust' stochastic filtering, by establishing continuity of a large class of stochastic filters with respect to the observation path, by first enhancing it by its L\'evy area. The second contribution of the current work is to consider an application of pathwise control to another kind of `robust' filtering, namely robustness of the filter with respect to model uncertainty.

Although classical filters are generally known to perform well under perfect knowledge of the system dynamics, they are typically very sensitive to modelling errors. Thus, the problem of robust filtering, in this sense, has attracted a great deal of interest; see the discussion at the beginning of Section~\ref{SecRobustFiltViaNonlinExp}. In \cite{AllanCohen2019} the authors constructed such a robust filter, the calculation of which involves the derivation of a pathwise stochastic control problem. In that setting the control terms did not appear (in any crucial way) in the coefficient of the driving noise in the controlled dynamics. Similarly to the approach in Buckdahn and Ma \cite{BuckdahnMa2007}, a change of variables could therefore be used to `hide' the rough noise term in the drift coefficient, thus recovering a more classical optimal control setting. In the current work we aim to significantly extend the theoretical results of \cite{AllanCohen2019}, which will require us to be able to handle full control of the dynamics. As in Crisan et al.~\cite{CrisanDiehlFrizOberhauser2013}, it will be useful to consider the observation process as a rough path, by first enhancing it by its L\'evy area.

It is our hope that the following exposition will be of interest to readers familiar with rough path analysis, but also accessible to those without a working knowledge of the subject. Accordingly, we begin Section~\ref{SecRoughPathPrelim} with a brief recall of the necessary technical preliminaries, and then present some new results for rough differential equations in the setting of optimal control. In Section~\ref{SecPathwiseControl} we discuss some alternative reformulations of the pathwise control problem with the aim to resolve the degeneracy issue. We provide a rigorous treatment of the resulting unbounded control problem, and illustrate the ideas with some simple examples. In Section~\ref{SecApplicFilter} we turn our attention to robust stochastic filtering. Our approach leads naturally to a pathwise optimal control problem and, despite the nonlinearities inherited from the classical filtering equations, we will proceed to characterise the associated value function as the solution of a rough HJB equation.

\section{Rough path preliminaries}\label{SecRoughPathPrelim}

We would like to consider an $\R^m$-valued process $X$ which, for each choice of control $\gamma \colon [0,T] \to \R^k$, satisfies an equation of the form
\begin{equation}\label{eq:roughdynamicsintro}
\rd X_s = b(X_s,\gamma_s)\,\rd s + \lambda(X_s,\gamma_s)\,\rd \zeta_s, \qquad s \in [0,T],
\end{equation}
where $\zeta$ is a continuous (deterministic) $\R^d$-valued path of infinite variation.

Suppose that $b$ and $\lambda$ are Lipschitz continuous. In the case when $\gamma$ is of finite variation and $\lambda$ does not depend on the solution $X$, the integral against $\zeta$ then exists in the classical Riemann--Stieltjes sense by integration by parts (see e.g.~Theorem~1.2.3 in Stroock \cite{Stroock2011}). The equation \eqref{eq:roughdynamicsintro} then has a unique solution, and moreover the solution map from the driver $\zeta$ to the corresponding solution $X^\zeta$ is continuous with respect to the supremum norm. In fact, in this case all the results of the next section can be reproduced without any reference to rough path theory, or any other such sophisticated machinery.

On the other hand, in the general case when $\lambda$ depends on $X$, the integral against $\zeta$ in \eqref{eq:roughdynamicsintro} does not even exist in the Riemann--Stieltjes sense. Moreover, even if $\zeta$ were smooth, the solution map $\zeta \mapsto X^\zeta$ is known to lack continuity, which later would be fatal to the derivation of our HJB equation.

A deterministic approach to integration against very general classes of signals is provided by rough path theory, which moreover allows the continuity property mentioned above to be recovered. As mentioned in the introduction, the key here is, rather than to simply integrate against the path $\zeta$, to first enhance $\zeta$ by a suitable `second order' process $\zeta^{(2)}$, which contains the missing information required to construct the so-called `rough integral' against the enhanced path $\bmz := (\zeta,\zeta^{(2)})$. There are by now a number of monographs on this subject, such as Friz and Hairer \cite{FrizHairer2014} and Friz and Victoir \cite{FrizVictoir2010}.

The language of rough path theory is typically written either in terms of the $\frac{1}{p}$-H\"older regularity of paths, or in terms of their $p$-variation. When working only with continuous paths (as we shall), these two notions of regularity are more or less equivalent (see Chapter~5 in \cite{FrizVictoir2010} for precise details), and the theory may be built up in an almost identical fashion using either notion. In the current work it will turn out to be necessary to work primarily with $p$-variation norms. On the other hand, in the proof of Proposition~\ref{proproughestimates} below we will make use of the marginally better control on the regularity of paths over small time intervals provided by restricting to $\frac{1}{p}$-H\"older rough paths. We shall therefore make use of both these notions of regularity.

\subsection{Notation}

Throughout, we will consider a finite time interval $[0,T]$, and write $\simplex := \{(s,t) : 0 \leq s \leq t \leq T\}$ for the standard 2-simplex. For any path $\zeta$ on $[0,T]$ we define the path increment $\zeta_{s,t} := \zeta_t - \zeta_s$, and write $\|\zeta\|_\infty := \sup_{s \in [0,T]}|\zeta_s|$ for the supremum norm. We will also make use of the following function spaces. We write
\begin{itemize}
\item $\cL(\R^d;\R^m)$ for the space of linear maps from $\R^d$ to $\R^m$,
\item $\Lipb$ for the space of bounded Lipschitz functions $b \colon \R^m \times \R^k \to \R^m$,
\item $C^n_b$ ($n \in \N$) for the space of $n$ times continuously differentiable (in the Fr\'echet sense) functions $\lambda \colon \R^m \times \R^k \to \cL(\R^d;\R^m)$ such that $\lambda$ and all its derivatives up to order $n$ are uniformly bounded,
\item $\Cpvar = \Cpvar([0,T];\R^k)$ for the space of $\R^k$-valued continuous paths of finite $p$-variation, that is, continuous paths $\gamma$ such that the seminorm
$$\|\gamma\|_{\pvar} := \bigg(\sup_{\cP}\sum_{[s,t] \in \cP}|\gamma_{s,t}|^p\bigg)^{\hspace{-2pt}\frac{1}{p}} < \infty,$$
where the supremum is taken over all partitions $\cP$ of the interval $[0,T]$,
\item $\gCpvar = \gCpvar([0,T];\R^k)$ for the closure of smooth paths from $[0,T] \to \R^k$ with respect to the $p$-variation seminorm.
\end{itemize}

For $p \in [2,3)$ we write $\sC^p = \sC^p([0,T];\R^d)$ for the space of $\R^d$-valued $\frac{1}{p}$-H\"older rough paths, that is, pairs $\bmz = (\zeta,\zeta^{(2)})$, where the path $\zeta \colon [0,T] \to \R^d$ and its `enhancement' $\zeta^{(2)} \colon \simplex \to \R^d \otimes \R^d$ satisfy certain algebraic and analytical constraints, namely Chen's relation\footnote{Here $\otimes$ is just the standard tensor product from $\R^d \times \R^d$ to $\R^d \otimes \R^d \simeq \R^{d \times d}$.},
\begin{equation*}
\zeta^{(2)}_{s,t} = \zeta^{(2)}_{s,r} + \zeta^{(2)}_{r,t} + \zeta_{s,r} \otimes \zeta_{r,t},
\end{equation*}
which is assumed to hold for all times $s \leq r \leq t$, as well as the condition that
\begin{gather*}
\ver{\bmz}_{\opHol} := \|\zeta\|_{\opHol} + \big\|\zeta^{(2)}\big\|_{\tpHol} < \infty,\\
\text{where} \qquad \|\zeta\|_{\opHol} := \sup_{s \neq t \in [0,T]}\frac{|\zeta_{s,t}|}{|t - s|^{\frac{1}{p}}} \qquad \text{and} \qquad \big\|\zeta^{(2)}\big\|_{\tpHol} := \sup_{s \neq t \in [0,T]}\frac{\big|\zeta^{(2)}_{s,t}\big|}{|t - s|^{\frac{2}{p}}}.
\end{gather*}
The enhanced path $\bmz$ is sometimes referred to as the `lift' of $\zeta$. We also define
\begin{align*}
\big\|\zeta^{(2)}\big\|_{\ptvar} &:= \bigg(\sup_{\cP}\sum_{[s,t] \in \cP}\big|\zeta^{(2)}_{s,t}\big|^{\frac{p}{2}}\bigg)^{\hspace{-2pt}\frac{2}{p}},\\
\ver{\bmz}_p &:= \|\zeta\|_{p} + \big\|\zeta^{(2)}\big\|_{\frac{p}{2}}.
\end{align*}
We will sometimes write e.g.~$\|\zeta\|_{p;[s,t]}$ for the $p$-variation of $\zeta$ over the subinterval $[s,t]$.

As we are working on the time interval $[0,T]$, it is straightforward to see that any rough path $\bmz = (\zeta,\zeta^{(2)}) \in \sC^p$ satisfies $\|\zeta\|_{\pvar} \leq \|\zeta\|_{\opHol}T^{\frac{1}{p}}$ and $\|\zeta^{(2)}\|_{\ptvar} \leq \|\zeta^{(2)}\|_{\tpHol}T^{\frac{2}{p}}$, which in particular implies that $\ver{\bmz}_p < \infty$ for any $\bmz \in \sC^p$.

We introduce the induced rough path metrics\footnote{The `metrics' $\varrho_{\opHol}$, $\varrho_p$ do not distinguish between constants, but $\sC^p$ does become a complete metric space when endowed with the metric $(\bme,\bmz) \mapsto |\eta_0 - \zeta_0| + \varrho_{\opHol}(\bme,\bmz)$.} given, for rough paths $\bme = (\eta,\eta^{(2)})$ and $\bmz = (\zeta,\zeta^{(2)})$, by
\begin{align*}
\varrho_{\opHol}(\bme,\bmz) &:= \|\eta - \zeta\|_{\opHol} + \big\|\eta^{(2)} - \zeta^{(2)}\big\|_{\tpHol},\\
\varrho_p(\bme,\bmz) &:= \|\eta - \zeta\|_{\pvar} + \big\|\eta^{(2)} - \zeta^{(2)}\big\|_{\ptvar}.
\end{align*}

As can be readily checked, any \emph{smooth} path $\zeta \colon [0,T] \to \R^d$ can be `lifted' in a canonical way to a rough path $\bmz = (\zeta,\zeta^{(2)})$ by enhancing it with the integral
\begin{equation}\label{eq:iterintegrals}
\zeta^{(2)}_{s,t} = \int_s^t\zeta_{s,r}\otimes\rd\zeta_r.
\end{equation}
On the other hand, for a general $\frac{1}{p}$-H\"older continuous path $\zeta$, the integral in \eqref{eq:iterintegrals} does not exist in the classical sense. In this case the value of this integral is postulated by the enhancement $\zeta^{(2)}$, which in practice is often constructed using stochastic integration.

Later we will also consider the space of \emph{geometric} rough paths $\sC_g^{0,p} \subset \sC^p$, defined as the closure of canonical lifts of smooth paths with respect to $\varrho_{\opHol}$. For example, when $\zeta$ is a semimartingale and the integral in \eqref{eq:iterintegrals} is defined using Stratonovich integration, the resulting lift turns out to be a (random) geometric rough path. This property of being well approximated by smooth paths allows one to make sense of solutions to a wide class of rough ODEs and PDEs---we will see an example of this in Definition~\ref{defnsolnroughHJB} below.

\subsection{Rough integration}

We now define a suitable class of integrands for rough integration. Given a rough path $\bmz \in \sC^p$, we define the space of controlled rough paths (in the sense of Gubinelli \cite{Gubinelli2004}), which we denote by $\sD_\zeta^{p} = \sD_\zeta^{p}([0,T];\R^m)$, consisting of pairs of paths
$$(X,X') \in \Cpvar([0,T];\R^m) \times \Cpvar([0,T];\cL(\R^d;\R^m))$$
such that the remainder term $R^X$, given by
$$R^X_{s,t} := X_{s,t} - X'_s\zeta_{s,t},$$
satisfies $\|R^X\|_{\frac{p}{2}} < \infty$. Here $X'$ is called the Gubinelli derivative of $X$ (with respect to $\zeta$). Equipped with the norm $(X,X') \mapsto |X_0| + |X'_0| + \|X'\|_p + \|R^X\|_{\frac{p}{2}}$, the space $\sD_\zeta^{p}$ is a Banach space.

\begin{remark}
As our main interest is in the optimal control of the solution $X$ to \eqref{eq:roughdynamicsintro}, the notion that $X$ is `controlled' by $\zeta$ introduces a possible source of confusion, but our use of the term should always be clear from the context.
\end{remark}

\begin{proposition}[Proposition~2.6 in \cite{FrizZhang2018}]\label{proproughintegral}
Let $\bmz = (\zeta,\zeta^{(2)}) \in \sC^p([0,T];\R^d)$, and let $(X,X') \in \sD^p_\zeta([0,T];\cL(\R^d;\R^m))$ be a controlled rough path. Then the limit
\begin{equation*}
\int_0^TX_r\,\rd\bmz_r := \lim_{|\cP| \to 0}\sum_{[s,t] \in \cP}X_s\zeta_{s,t} + X'_s\zeta^{(2)}_{s,t}
\end{equation*}
exists\footnote{Strictly speaking, in making precise sense of the product $X'_s\zeta^{(2)}_{s,t}$, we use the natural identification of $\cL(\R^d;\cL(\R^d;\R^m))$ with $\cL(\R^d \otimes \R^d;\R^m)$.}, where the limit is taken over any sequence of partitions $\cP$ of the interval $[0,T]$ such that the mesh size $|\cP| \to 0$. This limit (which does not depend on the choice of sequence of partitions) is called the rough integral of $X$ against $\bmz$.

Moreover, for any $0 \leq s < t \leq T$, we have the estimate
\begin{align}
\bigg|\int_s^tX_r\,\rd\bmz_r& - X_s\zeta_{s,t} - X'_s\zeta^{(2)}_{s,t}\bigg|\nonumber\\
&\leq C_p\Big(\big\|R^X\big\|_{\ptvarst}\|\zeta\|_{\pvarst} + \|X'\|_{\pvarst}\big\|\zeta^{(2)}\big\|_{\ptvarst}\Big),\label{eq:roughintbound}
\end{align}
where the constant $C_p$ depends only on $p$.
\end{proposition}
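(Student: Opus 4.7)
The plan is to apply the variation-norm sewing lemma (in the spirit of Friz--Zhang, which is cited) to the two-parameter germ
\[
\Xi_{s,t} := X_s\zeta_{s,t} + X'_s\zeta^{(2)}_{s,t},
\]
so that the rough integral is recovered as the associated sewing limit $\lim_{|\cP|\to 0}\sum_{[s,t]\in\cP}\Xi_{s,t}$. The whole proof reduces to two tasks: (i) a direct algebraic identity for the discrepancy $\delta\Xi_{s,r,t} := \Xi_{s,t} - \Xi_{s,r} - \Xi_{r,t}$, and (ii) a sewing-type argument that uses this discrepancy to control the oscillation of Riemann sums under partition refinement.

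First I would compute $\delta\Xi_{s,r,t}$ by hand. Splitting $\zeta_{s,t}=\zeta_{s,r}+\zeta_{r,t}$ gives $X_s\zeta_{s,t}-X_s\zeta_{s,r}-X_r\zeta_{r,t} = -X_{s,r}\zeta_{r,t}$, and Chen's relation $\zeta^{(2)}_{s,t} = \zeta^{(2)}_{s,r} + \zeta^{(2)}_{r,t} + \zeta_{s,r}\otimes\zeta_{r,t}$ gives
\[
X'_s\zeta^{(2)}_{s,t} - X'_s\zeta^{(2)}_{s,r} - X'_r\zeta^{(2)}_{r,t} = -X'_{s,r}\zeta^{(2)}_{r,t} + X'_s(\zeta_{s,r}\otimes\zeta_{r,t}).
\]
Adding these and using the definition $X_{s,r}=X'_s\zeta_{s,r}+R^X_{s,r}$ together with the identification $\cL(\R^d;\cL(\R^d;\R^m))\simeq\cL(\R^d\otimes\R^d;\R^m)$ produces the clean cancellation
\[
\delta\Xi_{s,r,t} = -R^X_{s,r}\zeta_{r,t} - X'_{s,r}\zeta^{(2)}_{r,t}.
\]
This is the step I expect to require the most care, as one has to be pedantic about the tensorial conventions so that the $X'_s\zeta_{s,r}\otimes\zeta_{r,t}$ term really does cancel the corresponding contribution from $X_{s,r}\zeta_{r,t}$.

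Next I would promote this pointwise identity into variation-type control. Writing $\omega_R(s,t):=\|R^X\|_{\ptvarst}^{p/2}$, $\omega_{X'}(s,t):=\|X'\|_{\pvarst}^p$, $\omega_\zeta(s,t):=\|\zeta\|_{\pvarst}^p$ and $\omega_2(s,t):=\|\zeta^{(2)}\|_{\ptvarst}^{p/2}$, each of these is a superadditive control on $\simplex$, and the identity above yields
\[
|\delta\Xi_{s,r,t}| \leq \omega_R(s,t)^{2/p}\omega_\zeta(s,t)^{1/p} + \omega_{X'}(s,t)^{1/p}\omega_2(s,t)^{2/p}.
\]
Since $p\in[2,3)$, the exponent on each product is $3/p>1$, so the variation-norm sewing lemma applies to each piece. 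It yields existence of the limit, independence of the approximating sequence of partitions, and the quantitative estimate
\[
\Bigl|\int_s^t X_r\,\rd\bmz_r - \Xi_{s,t}\Bigr| \leq C_p\bigl(\omega_R(s,t)^{2/p}\omega_\zeta(s,t)^{1/p} + \omega_{X'}(s,t)^{1/p}\omega_2(s,t)^{2/p}\bigr),
\]
which is precisely \eqref{eq:roughintbound}. The constant $C_p$ arises from the sewing argument's geometric series in the exponent $3/p-1>0$ and depends only on $p$.

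Granted the cited sewing-lemma statement, nothing else is required; the core content is the algebraic identity for $\delta\Xi$, and my only real concern during writing would be to make the tensor identification of $X'_s$ as a map $\R^d\otimes\R^d\to\R^m$ completely explicit so the reader can verify the cancellation without ambiguity.
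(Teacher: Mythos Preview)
Your argument is correct and is the standard sewing-lemma proof of this result. Note, however, that the paper does not actually give its own proof of this proposition: it is stated with attribution to Friz--Zhang \cite{FrizZhang2018} and used as a black box, so there is no in-paper proof to compare against. Your approach is precisely the one taken in the cited reference.
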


\subsection{Rough differential equations with controls}

For a given $p \in [2,3)$, rough path $\bmz = (\zeta,\zeta^{(2)}) \in \sC^p([0,T];\R^d)$ and control function $\gamma \in \Cptvar([0,T];\R^k)$, we consider the rough differential equation (RDE)
\begin{equation}\label{eq:RDEforX}
\rd X_s = b(X_s,\gamma_s)\,\rd s + \lambda(X_s,\gamma_s)\,\rd \bmz_s, \qquad s \in [0,T],
\end{equation}
controlled (in the sense of optimal control) by $\gamma$, with $X_0 = x \in \R^m$, where the second term on the right-hand side is interpreted as a rough integral against $\bmz$.

The main element that takes us outside the standard RDE setting is the appearance of the control $\gamma$ in the coefficients. Note however that, since $\gamma \in \Cptvar$, it is immediately controlled by $\zeta$ with $\gamma' = 0$, so that $(\gamma,0) \in \sD_\zeta^{p}([0,T];\R^k)$. Then, provided that $\lambda \in C^2_b$, for any $(X,X') \in \sD_\zeta^{p}([0,T];\R^m)$, the composition $\lambda(X,\gamma)$ can also be interpreted as being controlled by $\zeta$, with Gubinelli derivative given by
\begin{equation}\label{eq:GubderivlambdaXgamma}
\lambda(X,\gamma)' = D_x\lambda(X,\gamma)X',
\end{equation}
where $D_x\lambda$ is the Fr\'echet derivative of $\lambda$ in its first argument.

\begin{lemma}\label{lemmapvarpartitionbound}
For some $n \geq 1$, let $0 = t_0 < t_1 < \ldots < t_{n-1} < t_n = T$, be a partition of the interval $[0,T]$. Then, for any path $X$, one has that
$$\|X\|_{\pvarzT} \leq n\bigg(\sum_{i=1}^n\|X\|_{p;[t_{i-1},t_i]}^p\bigg)^{\hspace{-2pt}\frac{1}{p}}.$$
\end{lemma}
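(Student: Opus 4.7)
The plan is to restrict the supremum in the definition of $\|X\|_{\pvarzT}$ to partitions that refine $\{t_0,\ldots,t_n\}$, and then to apply a discrete power-mean inequality to split each increment across the prescribed breakpoints. Concretely, I would fix an arbitrary partition $\cP$ of $[0,T]$ and, for each $[u,v]\in\cP$, insert those $t_i$'s lying in $(u,v)$; this decomposes $[u,v]$ into at most $n$ consecutive subintervals $[s_0,s_1],\ldots,[s_{k-1},s_k]$ with $s_0=u$, $s_k=v$, and $k\leq n$.

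Writing $X_{u,v}=\sum_{j=1}^{k}X_{s_{j-1},s_j}$ and combining the triangle inequality with the power-mean bound $\bigl(\sum_{j=1}^{k}|a_j|\bigr)^{p}\leq k^{p-1}\sum_{j=1}^{k}|a_j|^{p}\leq n^{p-1}\sum_{j=1}^{k}|a_j|^{p}$, I obtain $|X_{u,v}|^{p}\leq n^{p-1}\sum_{j=1}^{k}|X_{s_{j-1},s_j}|^{p}$. Summing over $[u,v]\in\cP$, the right-hand side becomes $n^{p-1}$ times a sum of $p$-th-power increments over the refined partition $\cP\cup\{t_0,\ldots,t_n\}$. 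Each refined subinterval lies in exactly one $[t_{i-1},t_i]$ (since no $t_j$ can sit strictly inside one of them), so grouping by $i$ and bounding each group by the corresponding $p$-variation yields $\sum_{[u,v]\in\cP}|X_{u,v}|^{p}\leq n^{p-1}\sum_{i=1}^{n}\|X\|_{p;[t_{i-1},t_i]}^{p}$.

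Finally, taking the supremum over $\cP$ and extracting a $p$-th root gives $\|X\|_{\pvarzT}\leq n^{(p-1)/p}\bigl(\sum_{i=1}^{n}\|X\|_{p;[t_{i-1},t_i]}^{p}\bigr)^{1/p}$, which is already sharper than the stated bound since $n^{(p-1)/p}\leq n$. The argument is purely combinatorial and poses no real obstacle; one only needs to observe that if $[u,v]$ already sits inside some $[t_{i-1},t_i]$, the decomposition is trivial ($k=1$) and the bound is immediate.
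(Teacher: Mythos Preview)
Your proof is correct and follows essentially the same strategy as the paper: refine an arbitrary partition by the fixed points $\{t_i\}$, bound each $|X_{u,v}|^p$ by a sum over the inserted subintervals, and regroup by the intervals $[t_{i-1},t_i]$. The only difference is that you invoke the sharp power-mean inequality $(\sum_{j=1}^k |a_j|)^p \leq k^{p-1}\sum_j |a_j|^p$, yielding the constant $n^{(p-1)/p}$, whereas the paper uses the cruder bound $(\sum_{j=1}^k |a_j|)^p \leq n^p \sum_j |a_j|^p$, giving the constant $n$ stated in the lemma; your version is a strict improvement but the argument is otherwise identical.
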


\begin{proof}
Let $0 = s_0 < s_1 < \ldots < s_{N-1} < s_N = T$ be another partition of the interval $[0,T]$. We can label the union of these two partitions in two different ways as follows. We can either write
\begin{align*}
s_{j-1} = t^j_0 < t^j_1 < \ldots < t^j_{n_j} = s_j \quad &\text{for each} \quad j = 1, \ldots, N,\\
\text{or} \quad t_{i-1} = s^i_0 < s^i_1 < \ldots < s^i_{N_i} = t_i \quad &\text{for each} \quad i = 1, \ldots, n,
\end{align*}
where, crucially, $n_j \leq n$ for every $j$. We have
\begin{align*}
\sum_{j=1}^N\big|X_{s_j} - X_{s_{j-1}}\hspace{-1pt}\big|^p &\leq \sum_{j=1}^N\bigg(\sum_{i=1}^{n_j}\big|X_{t^j_i} - X_{t^j_{i-1}}\hspace{-1pt}\big|\bigg)^{\hspace{-2pt}p} \leq n^p\sum_{j=1}^N\sum_{i=1}^{n_j}\big|X_{t^j_i} - X_{t^j_{i-1}}\hspace{-1pt}\big|^p\\
&= n^p\sum_{i=1}^{n}\sum_{j=1}^{N_i}\big|X_{s^i_j} - X_{s^i_{j-1}}\hspace{-1pt}\big|^p \leq n^p\sum_{i=1}^{n}\|X\|_{p;[t_{i-1},t_i]}^p.
\end{align*}
The result then follows from taking the supremum over all possible partitions $s_0 < s_1 < \ldots < s_N$ of the interval $[0,T]$.
\end{proof}

\begin{proposition}\label{proproughestimates}
Let $b \in \text{Lip}_b$ and $\lambda,\psi \in C^2_b$. For some $p \in [2,3)$ and $L > 0$, let $\bmz \in \sC^p$ such that $\ver{\bmz}_{\opHol} \leq L$, and suppose that $X$ satisfies the RDE \eqref{eq:RDEforX} with $X' = \lambda(X,\gamma)$, for some $\gamma \in \Cptvar$. We have the following estimates:
\begin{enumerate}[(i)]
\item $\|\psi(X,\gamma)'\|_p \leq C_{\lambda,\psi,p}\big(\|X\|_p + \|\gamma\|_{\frac{p}{2}}\big)$,
\item $\big\|R^{\psi(X,\gamma)}\big\|_{\frac{p}{2}} \leq C_{\psi,p}\Big(\|X\|_p^2 + \big\|R^X\big\|_{\frac{p}{2}} + \|\gamma\|_{\frac{p}{2}}\Big)$,
\item $\|X\|_p \leq C_{b,\lambda,p,T,L}\big(1 + \|\gamma\|_{\frac{p}{2}}^{1 + p}\big)$,
\item $\big\|R^X\big\|_{\frac{p}{2}} \leq C_{b,\lambda,p,T,L}\big(1 + \|\gamma\|_{\frac{p}{2}}^{2 + p}\big)$,
\end{enumerate}
where in each case the constant $C$ depends only on the variables indicated.
\end{proposition}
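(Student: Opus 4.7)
The plan is to establish (i) and (ii) as local Taylor/chain-rule estimates in $p$-variation, and then combine them with the rough-integral bound \eqref{eq:roughintbound} on short intervals to drive a contraction-type bootstrap, with Lemma~\ref{lemmapvarpartitionbound} used to glue the local estimates into the global statements (iii) and (iv). For (i), I would start from the chain-rule identity $\psi(X,\gamma)' = D_x\psi(X,\gamma)\lambda(X,\gamma)$ (analogous to \eqref{eq:GubderivlambdaXgamma}). Both $D_x\psi$ and $\lambda$ are uniformly bounded and Lipschitz in $(x,\gamma)$ since $\psi,\lambda \in C^2_b$, so the product rule $\|fg\|_p \leq \|f\|_\infty\|g\|_p + \|g\|_\infty\|f\|_p$ together with the composition estimate $\|F(X,\gamma)\|_p \leq C(\|X\|_p + \|\gamma\|_p)$ gives $\|\psi(X,\gamma)'\|_p \leq C(\|X\|_p + \|\gamma\|_p)$, and the sequence-space embedding $\ell^{p/2} \hookrightarrow \ell^p$ (valid since $p \geq p/2$) gives $\|\gamma\|_p \leq \|\gamma\|_{p/2}$, finishing (i).

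For (ii), a Taylor expansion of $\psi$ around $(X_s,\gamma_s)$ (performed first in $x$ and then in $\gamma$ to avoid a $|\gamma_{s,t}|^2$ remainder), together with substitution of $X_{s,t} = \lambda(X_s,\gamma_s)\zeta_{s,t} + R^X_{s,t}$ into the resulting linear-in-$X_{s,t}$ term, yields
\begin{equation*}
R^{\psi(X,\gamma)}_{s,t} = D_x\psi(X_s,\gamma_s)R^X_{s,t} + D_\gamma\psi(X_s,\tilde\gamma_s)\gamma_{s,t} + O\big(|X_{s,t}|^2 + |X_{s,t}||\gamma_{s,t}|\big),
\end{equation*}
for some intermediate point $\tilde\gamma_s$ between $\gamma_s$ and $\gamma_t$. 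Taking $p/2$-variation then yields the three contributions $\|R^X\|_{p/2}$ (from the first term), $\|\gamma\|_{p/2}$ (from the second), and $\|X\|_p^2$ (from the quadratic remainder), with the cross term $\|X\|_p\|\gamma\|_p$ controlled via Cauchy--Schwarz and Young's inequality and then reduced using $\|\gamma\|_p \leq \|\gamma\|_{p/2}$.

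For (iii) and (iv), using the hypothesis $\ver{\bmz}_{\opHol} \leq L$, I would choose a partition $0 = t_0 < \ldots < t_n = T$ on which $L(t_i - t_{i-1})^{1/p} + L^2(t_i - t_{i-1})^{2/p} \leq \delta$ for some fixed small $\delta > 0$; the number $n$ is then polynomial in $L, T, \delta^{-1}$ but independent of $\gamma$. Feeding (i) and (ii) (with $\psi = \lambda$) into the error bound \eqref{eq:roughintbound} on each $[t_{i-1},t_i]$ produces a self-improving inequality of the schematic form
\begin{equation*}
\|R^X\|_{\ptvarst} + \|X\|_{\pvarst} \leq C\delta\big(\|R^X\|_{\ptvarst} + \|X\|_{\pvarst}^2\big) + C\big(1 + \|\gamma\|_{\ptvarst}\big),
\end{equation*}
and for $\delta$ sufficiently small the linear term is absorbed on the left while the quadratic $\|X\|_{p;[s,t]}^2$ is tamed by a Davie-type local continuity bootstrap (see Chapter~8 of \cite{FrizHairer2014}) exploiting $\|X\|_{p;[s,t]} \leq \|\lambda\|_\infty\|\zeta\|_{p;[s,t]} + \|R^X\|_{p/2;[s,t]}$. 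This gives a local bound $\|X\|_{p;[t_{i-1},t_i]} + \|R^X\|_{p/2;[t_{i-1},t_i]} \leq C(1 + \|\gamma\|_{p/2;[t_{i-1},t_i]})$, and then Lemma~\ref{lemmapvarpartitionbound} together with the super-additivity estimate $\sum_i \|\gamma\|_{p/2;[t_{i-1},t_i]}^p \leq \|\gamma\|_{p/2}^p$ glues these into (iii); substituting (iii) back into the right-hand side of the $R^X$-bound through the quadratic $\|X\|_p^2$ coming from (ii) finally yields (iv).

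The main technical obstacle is the short-interval bootstrap: $\delta$ must be chosen small enough both to absorb the linear $\|R^X\|_{p/2;[s,t]}$-term on the left-hand side and to tame the quadratic $\|X\|_{p;[s,t]}^2$ via the continuity bootstrap, and the exact exponents $1+p$ and $2+p$ then require careful bookkeeping of how the factor $n$ from Lemma~\ref{lemmapvarpartitionbound} combines with the local linear-in-$\|\gamma\|_{p/2;[s,t]}$ dependence and with the quadratic feedback from iterating (ii) into the $R^X$-equation.
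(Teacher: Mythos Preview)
Your treatment of (i) and (ii) is fine and matches the paper's remark that these ``follow from standard arguments''. The gap is in your strategy for (iii) and (iv).

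You propose to fix a partition with $n$ pieces depending only on $L,T,\delta$ (and \emph{not} on $\gamma$), and then run a Davie-type continuity bootstrap on each piece to obtain the local bound $\|X\|_{p;[t_{i-1},t_i]} + \|R^X\|_{p/2;[t_{i-1},t_i]} \leq C(1+\|\gamma\|_{p/2;[t_{i-1},t_i]})$. This local bound is not available. The continuity bootstrap for an inequality of the form $A \leq C\delta A^2 + C(1+G)$ only traps $A$ in the small branch provided the small-branch value $2C(1+G)$ is itself below the threshold $\sim 1/(C\delta)$ at which the two branches meet. Since $\gamma$ is only of finite $p/2$-variation (not H\"older), $G = \|\gamma\|_{p/2;[t_{i-1},t_i]}$ can be arbitrarily large on any fixed subinterval, so for large $G$ the bootstrap does not exclude the large branch and you cannot conclude $A \leq 2C(1+G)$. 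Indeed, if your local bound did hold on a $\gamma$-independent partition, gluing via Lemma~\ref{lemmapvarpartitionbound} and super-additivity of $\|\gamma\|_{p/2}^{p/2}$ would yield $\|X\|_p \lesssim 1 + \|\gamma\|_{p/2}$, i.e.\ a \emph{linear} bound, strictly stronger than the claimed exponent $1+p$; this should already signal that something is off.

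The paper handles this differently: rather than fixing the partition and bounding $\|X\|_{p;I}$ by $C(1+\|\gamma\|_{p/2;I})$, it fixes the local bound on $\|X\|_{p;I}$ to a \emph{constant} (namely $1/(2C_2)$) and lets the \emph{partition} depend on $\gamma$. Concretely, one defines $t^\ast = \sup\{t : C_2\|X\|_{p;I} \leq 1/2 \text{ whenever } |I| \leq t\}$, and at a critical interval where equality is attained, the inequality $\|X\|_{p;I} \leq 2C_2(1+\|\gamma\|_{p/2})(\|\zeta\|_{p;I} + \|\zeta^{(2)}\|_{p/2;I} + |I|)$ forces $1/t^\ast \lesssim 1 + \|\gamma\|_{p/2}^p$. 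The number of subintervals is then $n \lesssim 1 + \|\gamma\|_{p/2}^p$, and Lemma~\ref{lemmapvarpartitionbound} gives $\|X\|_p \lesssim n^{1+1/p}$, which produces exactly the exponent $1+p$. The same scheme applied to $\|R^X\|_{p/2}$ yields (iv). So the missing idea is that the $\gamma$-dependence must enter through the \emph{number} of subintervals, not through the size of the local bound.
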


\begin{proof}
The first two estimates follow from standard arguments, noting that the Gubinelli derivative of $\psi(X,\gamma)$ is given by $\psi(X,\gamma)' = D_x\psi(X,\gamma)\lambda(X,\gamma)$. Let us therefore turn our attention to the proof of (iii). In the following the symbol $\lesssim$ shall denote inequality up to a multiplicative constant depending only on $b,\lambda,p,T$ and $L$.

Let $[s,t] \subseteq [0,T]$. We then have
\begin{align*}
\big|R^X_{s,t}\big| &= \big|X_{s,t} - X'_s\zeta_{s,t}\big|\\
&\leq \bigg|\int_s^t\lambda(X_r,\gamma_r)\,\rd\bmz_r - \lambda(X_s,\gamma_s)\zeta_{s,t} - \lambda(X,\gamma)'_s\zeta^{(2)}_{s,t}\bigg|\\
&\qquad + \bigg|\int_s^tb(X_r,\gamma_r)\,\rd r\bigg| + \big|\lambda(X,\gamma)'_s\zeta^{(2)}_{s,t}\big|\\
&\lesssim \big\|R^{\lambda(X,\gamma)}\big\|_{\frac{p}{2};[s,t]}\|\zeta\|_{p;[s,t]} + \|\lambda(X,\gamma)'\|_{p;[s,t]}\big\|\zeta^{(2)}\big\|_{\frac{p}{2};[s,t]} + |t - s| + \big|\zeta^{(2)}_{s,t}\big|,
\end{align*}
where we applied \eqref{eq:roughintbound} to obtain the last line. It follows that for a given interval $I \subseteq [0,T]$ of length $|I|$,
$$\big\|R^X\big\|_{\frac{p}{2};I} \lesssim \big\|R^{\lambda(X,\gamma)}\big\|_{\frac{p}{2};I}\|\zeta\|_{p;I} + \|\lambda(X,\gamma)'\|_{p;I}\big\|\zeta^{(2)}\big\|_{\frac{p}{2};I} + |I| + \big\|\zeta^{(2)}\big\|_{\frac{p}{2};I}.$$
Applying the estimates in (i) and (ii) with $\psi = \lambda$, we obtain
\begin{align*}
\big\|R^X\big\|_{\frac{p}{2};I} \leq C_1\Big(&\big(\|X\|_{p;I}^2 + \big\|R^X\big\|_{\frac{p}{2};I} + \|\gamma\|_{\frac{p}{2};I}\big)\|\zeta\|_{p;I}\\
&\quad + \big(1 + \|X\|_{p;I}^2 + \|\gamma\|_{\frac{p}{2};I}\big)\big\|\zeta^{(2)}\big\|_{\frac{p}{2};I} + |I| + \big\|\zeta^{(2)}\big\|_{\frac{p}{2};I}\Big)
\end{align*}
for some constant $C_1$ (which only depends on $b,\lambda$ and $p$).

Since $\|\zeta\|_{p;I} \leq \|\zeta\|_{\opHol}|I|^{\frac{1}{p}} \leq L|I|^{\frac{1}{p}}$, there exists some $r > 0$ (depending only on $p,L$ and $C_1$) sufficiently small such that
\begin{equation}\label{eq:C1zetalesshalf}
C_1\|\zeta\|_{p;I} \leq \frac{1}{2}
\end{equation}
whenever $|I| \leq r$. It is enough to prove the result for $T \leq r$, since one can then extend the result to any larger $T$ using Lemma~\ref{lemmapvarpartitionbound}. We will therefore assume that $T \leq r$, so that \eqref{eq:C1zetalesshalf} holds for all intervals $I$ under consideration. We then deduce that
\begin{equation}\label{eq:RXboundX2gamma}
\big\|R^X\big\|_{\frac{p}{2};I} \lesssim \big(\|X\|_{p;I}^2 + \|\gamma\|_{\frac{p}{2};I}\big)\|\zeta\|_{p;I} + \big(1 + \|X\|_{p;I}^2 + \|\gamma\|_{\frac{p}{2};I}\big)\big\|\zeta^{(2)}\big\|_{\frac{p}{2};I} + |I|.
\end{equation}
From the basic estimate
\begin{equation}\label{eq:XboundzetaRX}
\|X\|_{p;I} \lesssim \|\zeta\|_{p,I} + \|R^X\|_{\frac{p}{2};I},
\end{equation}
we then have that
\begin{equation}\label{eq:XboundC2X2}
\|X\|_{p;I} \leq C_2\big(1 + \|\gamma\|_{\frac{p}{2};I}\big)\Big(\|\zeta\|_{p;I} + \big\|\zeta^{(2)}\big\|_{\frac{p}{2};I} + |I|\Big) + C_2\|X\|_{p;I}^2
\end{equation}
for some constant $C_2$ (depending on $b,\lambda,p$ and $L$). From here, we aim to infer an estimate which holds on small subintervals, and then use Lemma~\ref{lemmapvarpartitionbound} to paste such subintervals together to obtain an estimate which holds on the entire interval $[0,T]$.

It follows from above that, if $C_2\|X\|_{p;I} \leq \frac{1}{2}$, then
\begin{equation*}
\|X\|_{p;I} \leq 2C_2\big(1 + \|\gamma\|_{\frac{p}{2};I}\big)\Big(\|\zeta\|_{p;I} + \big\|\zeta^{(2)}\big\|_{\frac{p}{2};I} + |I|\Big).
\end{equation*}
Let $t^\ast = \sup\{t \in [0,T] : C_2\|X\|_{p;I} \leq \frac{1}{2} \text{ whenever } |I| \leq t\}$. If $t^\ast = T$ then we are done. Otherwise, let $I$ be an interval such that $|I| = t^\ast$ and $C_2\|X\|_{p;I} = \frac{1}{2}$. Then
\begin{align*}
\frac{1}{2C_2} = \|X\|_{p;I} &\leq 2C_2\big(1 + \|\gamma\|_{\frac{p}{2};I}\big)\Big(\|\zeta\|_{p;I} + \big\|\zeta^{(2)}\big\|_{\frac{p}{2};I} + |I|\Big)\\
&\leq 2C_2\big(1 + \|\gamma\|_{\frac{p}{2};[0,T]}\big)\Big(\|\zeta\|_{\opHol}(t^\ast)^{\frac{1}{p}} + \big\|\zeta^{(2)}\big\|_{\tpHol}(t^\ast)^{\frac{2}{p}} + t^\ast\Big),
\end{align*}
and we deduce that
$$\frac{1}{t^\ast} \lesssim 1 + \|\gamma\|_{\frac{p}{2};[0,T]}^p.$$
The interval $[0,T]$ can be partitioned into $n := \lceil T/t^\ast \rceil$ subintervals $I$ of length at most $t^\ast$, on each of which we have $\|X\|_{p;I} \leq \frac{1}{2C_2}$. From Lemma~\ref{lemmapvarpartitionbound}, we obtain the bound $\|X\|_{p;[0,T]} \lesssim n^{1 + \frac{1}{p}}$, where $n \leq 1 + T/t^\ast \lesssim 1 + \|\gamma\|_{\frac{p}{2};[0,T]}^p$, and the estimate in (iii) follows.

Substituting \eqref{eq:XboundzetaRX} into \eqref{eq:RXboundX2gamma}, we have
$$\big\|R^X\big\|_{\frac{p}{2};I} \leq C_3\big(1 + \|\gamma\|_{\frac{p}{2};I}\big)\Big(\|\zeta\|_{p;I} + \big\|\zeta^{(2)}\big\|_{\frac{p}{2};I} + |I|\Big) + C_3\big\|R^X\big\|_{\frac{p}{2};I}^2$$
for some new constant $C_3$. This equation is of the same form as \eqref{eq:XboundC2X2}. We can thus apply exactly the same argument as above to deduce the estimate in (iv).
\end{proof}

The results of Theorem~\ref{thmsolnofRDE} and Proposition~\ref{proproughstability} below are new in this setting due to the inclusion of the control function $\gamma$, particularly in the controlled path setting of Gubinelli with path regularity measured in $p$-variation, but they are based upon standard results, so we shall postpone their proofs to the Appendix.

\begin{theorem}\label{thmsolnofRDE}
Let $b \in \Lipb$, $\lambda \in C^3_b$ and $\bmz \in \sC^p$. For any $x \in \R^m$ and any $\gamma \in \Cptvar$, there exists a unique solution $(X,X') \in \sD_\zeta^{p}$ to the RDE
\begin{equation}\label{eq:RDEforXthm}
X_t = x + \int_0^tb(X_s,\gamma_s)\,\rd s + \int_0^t\lambda(X_s,\gamma_s)\,\rd \bmz_s, \qquad t \in [0,T],
\end{equation}
such that $X' = \lambda(X,\gamma)$, where $\lambda(X,\gamma)$ is interpreted as a controlled rough path with Gubinelli derivative given by \eqref{eq:GubderivlambdaXgamma}.
\end{theorem}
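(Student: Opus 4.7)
The plan is to prove existence and uniqueness via a Banach fixed-point argument on the Gubinelli space $\sD^p_\zeta$, carried out first on a sufficiently short subinterval and then iterated across $[0,T]$ using the a priori estimates of Proposition~\ref{proproughestimates} and the partition bound of Lemma~\ref{lemmapvarpartitionbound}.

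For $\tau \in (0,T]$ to be chosen, I would set up the map $\cM \colon \sD^p_\zeta([0,\tau];\R^m) \to \sD^p_\zeta([0,\tau];\R^m)$ defined by $\cM(Y,Y') := (X,X')$, where
$$X_t = x + \int_0^tb(Y_s,\gamma_s)\,\rd s + \int_0^t\lambda(Y_s,\gamma_s)\,\rd\bmz_s, \qquad X'_t := \lambda(Y_t,\gamma_t).$$
To show $\cM$ is well-defined, note first that $\gamma \in \Cptvar \subset \Cpvar$ embeds trivially in $\sD^p_\zeta$ as $(\gamma,0)$. Since $\lambda \in C^3_b$, the standard chain rule for controlled rough paths applied jointly to $(Y,Y')$ and $(\gamma,0)$ produces $(\lambda(Y,\gamma),D_x\lambda(Y,\gamma)Y') \in \sD^p_\zeta$, so Proposition~\ref{proproughintegral} makes sense of the rough integral, and the resulting $(X,X')$ indeed lies in $\sD^p_\zeta$ with $X' = \lambda(Y,\gamma)$.

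Next, I would verify that for $\tau$ small enough $\cM$ preserves a suitable closed ball in $\sD^p_\zeta([0,\tau];\R^m)$ and is a strict contraction there. Invariance follows directly from Proposition~\ref{proproughestimates}(iii)--(iv); contraction follows from applying the integral bound~\eqref{eq:roughintbound} together with parts (i)--(ii) of the same proposition to the difference $\cM(Y^1) - \cM(Y^2)$, and from the observation that on short intervals the factors $\|\zeta\|_{\pvarst}$, $\|\zeta^{(2)}\|_{\ptvarst}$ and $\|\gamma\|_{\ptvarst}$ can all be made arbitrarily small (the H\"older estimate $\|\zeta\|_{\pvarst} \leq \ver{\bmz}_{\opHol}(t-s)^{1/p}$ being the key tool, as in the proof of Proposition~\ref{proproughestimates}). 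The Banach fixed-point theorem then yields a unique local solution on $[0,\tau]$.

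The final step is to extend this local solution to all of $[0,T]$. Because $\tau$ can be chosen to depend only on $\ver{\bmz}_{\opHol}$, on $\|\gamma\|_{\ptvarzT}$ and on the coefficient norms, finitely many such intervals cover $[0,T]$; stitching the resulting solutions and invoking Lemma~\ref{lemmapvarpartitionbound} together with Proposition~\ref{proproughestimates} ensures global finiteness of $\|X\|_{\pvarzT}$ and $\|R^X\|_{\ptvarzT}$, while uniqueness on $[0,T]$ is immediate from local uniqueness. The step that I expect to demand the most care is the chain-rule calculation for $\lambda(Y,\gamma)$ as a controlled rough path with joint $Y$- and $\gamma$-dependence: one must track how the increment of $\gamma$ (whose $p/2$-variation is tame relative to $\zeta$) enters $R^{\lambda(Y,\gamma)}$, but since $\gamma$ has zero Gubinelli derivative these contributions feed only into the remainder term and are bounded, as in Proposition~\ref{proproughestimates}(ii), by $\|\gamma\|_{\ptvarst}$, which is exactly the form required for the contraction argument to close.
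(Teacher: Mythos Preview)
Your overall strategy---a Banach fixed-point argument on $\sD^p_\zeta$ over a short interval, followed by pasting---is exactly what the paper does. However, the specific estimates you invoke do not do the work you need them to.

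The main gap is in your appeal to Proposition~\ref{proproughestimates}. Parts~(iii)--(iv) of that proposition are \emph{a priori} estimates: they assume $X$ already satisfies the RDE~\eqref{eq:RDEforX} with $X' = \lambda(X,\gamma)$. To show invariance of a ball under $\cM$ you need bounds on $\cM(Y,Y')$ for an arbitrary $(Y,Y')$ in the ball, and such a $(Y,Y')$ is not a solution unless it is already the fixed point. So (iii)--(iv) cannot be used to establish invariance---the argument would be circular. Similarly, parts~(i)--(ii) bound $\|\psi(X,\gamma)'\|_p$ and $\|R^{\psi(X,\gamma)}\|_{p/2}$ for a single controlled path; they are not difference estimates, and applying them ``to the difference $\cM(Y^1)-\cM(Y^2)$'' does not yield a Lipschitz bound on $\cM$.

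The paper instead relies on three separate lemmas (adapted from Friz--Zhang \cite{FrizZhang2018}, stated in the appendix as Lemmas~\ref{lemmainvariancebounds}--\ref{lemmacompwithsmoothfuncbounds}): one giving the right bound on $\|\lambda(X,\gamma)\|_p$ and $\|R^{\int\lambda(X,\gamma)\,\rd\bmz}\|_{p/2}$ for invariance, and two giving genuine \emph{difference} estimates on $\lambda(X,\gamma)'-\lambda(Y,\vartheta)'$, $R^{\lambda(X,\gamma)}-R^{\lambda(Y,\vartheta)}$, and the corresponding rough-integral remainders for contraction. A further technical point you omit is that the contraction is obtained not in the natural norm on $\sD^p_\zeta$ but in a weighted norm $\|X'\|_p + \delta\|R^X\|_{p/2}$ with $\delta$ tuned to the constants; without this device the contraction constant does not close below~$1$. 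Once the local solution is built, the paper pastes as you describe, noting that the small time step depends on $\|\gamma\|_{p/2}$ but not on the initial data $(x,\gamma_0)$.
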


\begin{proposition}\label{proproughstability}
Let $b \in \Lipb$, $\lambda \in C^3_b$, $\gamma,\vartheta \in \Cptvar$ and $\bme,\bmz \in \sC^p$ with $\ver{\bme}_{\opHol} \leq L$, $\ver{\bmz}_{\opHol} \leq L$. Let $(X,X') = (X,\lambda(X,\gamma)) \in \sD^p_\eta$ (resp.~$(Y,Y') = (Y,\lambda(Y,\vartheta)) \in \sD^p_\zeta$) be the unique solution of the RDE \eqref{eq:RDEforXthm} controlled by $\gamma$ (resp.~$\vartheta$) and driven by $\bme$ (resp.~$\bmz$) with the initial condition $x$ (resp.~$y$). Suppose that $\|\gamma\|_{\frac{p}{2}},\|\vartheta\|_{\frac{p}{2}} \leq M$ for some $M > 0$. Then
\begin{equation}\label{eq:roughstability}
\|X' - Y'\|_p + \big\|R^X - R^Y\big\|_{\frac{p}{2}} \leq C\Big(|x - y| + \|\gamma - \vartheta\|_\infty + \|\gamma - \vartheta\|_{\frac{p}{2}} + \varrho_p(\bme,\bmz)\Big).
\end{equation}
Moreover, given $\psi \in C^3_b$, we have
\begin{equation}\label{eq:roughstabilityintegral}
\bigg\|\int_0^\cdot\psi(X_s,\gamma_s)\,\rd\bme_s - \int_0^\cdot\psi(Y_s,\vartheta_s)\,\rd\bmz_s\bigg\|_p \leq C'\Big(|x - y| + \|\gamma - \vartheta\|_\infty + \|\gamma - \vartheta\|_{\frac{p}{2}} + \varrho_p(\bme,\bmz)\Big).
\end{equation}
Here the constants $C,C'$ depend on $b,\lambda,p,T,L$ and $M$, and $C'$ also depends on $\psi$.
\end{proposition}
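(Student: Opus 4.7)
The plan is to adapt the classical local stability theorem for rough differential equations to the present setting with controls, using a contraction-style estimate on short subintervals and then pasting via Lemma~\ref{lemmapvarpartitionbound}. Since $\gamma, \vartheta \in \Cptvar$, the pairs $(\gamma,0)$ and $(\vartheta,0)$ are trivially controlled rough paths above $\bme$ and $\bmz$, so the compositions $\lambda(X,\gamma)$ and $\lambda(Y,\vartheta)$ become controlled rough paths with Gubinelli derivatives given by \eqref{eq:GubderivlambdaXgamma}. The uniform bound $\|\gamma\|_{\frac{p}{2}},\|\vartheta\|_{\frac{p}{2}} \leq M$ makes Proposition~\ref{proproughestimates} applicable to both solutions, yielding a priori bounds on $\|X\|_p,\|Y\|_p,\|R^X\|_{\frac{p}{2}},\|R^Y\|_{\frac{p}{2}}$ in terms of $b,\lambda,p,T,L,M$ only.

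The key intermediate step is controlled-rough-path stability of the composed coefficient. Using first- and second-order Taylor expansion of $\lambda \in C^3_b$, I would derive, on any subinterval $I \subseteq [0,T]$, estimates of the shape
\begin{align*}
\|\lambda(X,\gamma)' - \lambda(Y,\vartheta)'\|_{p;I} &\lesssim |x-y| + \|X'-Y'\|_{p;I} + \|\gamma-\vartheta\|_\infty + \|\gamma-\vartheta\|_{\frac{p}{2};I},\\
\big\|R^{\lambda(X,\gamma)} - R^{\lambda(Y,\vartheta)}\big\|_{\frac{p}{2};I} &\lesssim (\text{same RHS}) + \big\|R^X - R^Y\big\|_{\frac{p}{2};I}.
\end{align*}
Both norms of $\gamma - \vartheta$ necessarily appear: $\|\gamma-\vartheta\|_\infty$ from the pointwise mismatch of the derivatives $D\lambda$ evaluated at $(X,\gamma)$ versus $(Y,\vartheta)$, and $\|\gamma-\vartheta\|_{\frac{p}{2}}$ from the increment $\gamma_{s,t} - \vartheta_{s,t}$ coming from the $D_\gamma\lambda$ contribution.

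With these in hand, I would apply the rough integral bound \eqref{eq:roughintbound} to the controlled-rough-path difference above $\bme$, then add and subtract in order to account for the driver difference $\bme - \bmz$ through $\varrho_p(\bme,\bmz)$. Substituting the RDE~\eqref{eq:RDEforXthm} for both $X_{s,t}$ and $Y_{s,t}$ into the identity $R^X_{s,t} - R^Y_{s,t} = X_{s,t} - Y_{s,t} - \lambda(X_s,\gamma_s)\eta_{s,t} + \lambda(Y_s,\vartheta_s)\zeta_{s,t}$ yields, on any subinterval $I$, an inequality of the form
$$\|X'-Y'\|_{p;I} + \big\|R^X - R^Y\big\|_{\frac{p}{2};I} \leq C_\ast \Phi + \varepsilon(|I|)\Big(\|X'-Y'\|_{p;I} + \big\|R^X - R^Y\big\|_{\frac{p}{2};I}\Big),$$
where $\Phi := |x-y| + \|\gamma-\vartheta\|_\infty + \|\gamma-\vartheta\|_{\frac{p}{2}} + \varrho_p(\bme,\bmz)$ and $\varepsilon(|I|) \to 0$ as $|I| \to 0$, thanks to the H\"older controls $\|\eta\|_{\opHol},\|\eta^{(2)}\|_{\tpHol} \leq L$. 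Choosing $r > 0$ small enough that $\varepsilon(r) \leq \tfrac{1}{2}$ absorbs the right-hand contribution, and Lemma~\ref{lemmapvarpartitionbound} pastes the estimate across the $\lceil T/r \rceil$ subintervals covering $[0,T]$ to give \eqref{eq:roughstability}.

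For the second estimate \eqref{eq:roughstabilityintegral}, I would repeat the controlled-rough-path stability argument of the second paragraph with $\lambda$ replaced by $\psi$, combine it with \eqref{eq:roughstability} already in hand, and apply \eqref{eq:roughintbound} once more together with the triangle inequality for the driver difference. The main technical obstacle is executing the Taylor bookkeeping of the second paragraph carefully enough that the final right-hand side contains only $\|\gamma-\vartheta\|_\infty$ and $\|\gamma-\vartheta\|_{\frac{p}{2}}$, and not, say, a full $p$-variation norm of $\gamma - \vartheta$, which would be too strong given that we only assume $\gamma,\vartheta \in \Cptvar$.
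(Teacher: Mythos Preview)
Your proposal follows essentially the same route as the paper's proof: a priori bounds from Proposition~\ref{proproughestimates}, controlled-rough-path stability of the composition $\lambda(X,\gamma)$, a local estimate on short subintervals, and pasting via Lemma~\ref{lemmapvarpartitionbound}. The paper packages the composition stability and rough-integral-difference estimates by citing lemmas from Friz--Zhang \cite{FrizZhang2018}, whereas you propose to derive them by hand via Taylor expansion; this is fine.

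There is one point where your writeup is slightly too optimistic. You claim a local inequality of the form
\[
\|X'-Y'\|_{p;I} + \big\|R^X - R^Y\big\|_{\frac{p}{2};I} \leq C_\ast\Phi + \varepsilon(|I|)\Big(\|X'-Y'\|_{p;I} + \big\|R^X - R^Y\big\|_{\frac{p}{2};I}\Big),
\]
with $\varepsilon(|I|)\to 0$. In fact the two components are asymmetric: since $X' = \lambda(X,\gamma)$, the estimate for $\|X'-Y'\|_{p;I}$ unavoidably contains a term $C\|R^X-R^Y\|_{\frac{p}{2};I}$ with a constant $C$ that does \emph{not} shrink with $|I|$ (it comes from the Lipschitz bound on $\lambda$ applied to $\|X-Y\|_{p;I}$, which in turn is controlled by $\|R^X-R^Y\|_{\frac{p}{2};I}$). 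The paper resolves this by working with the weighted quantity $\|X'-Y'\|_p + \delta\|R^X-R^Y\|_{\frac{p}{2}}$ and choosing $\delta = 2C_0$ so that the offending term is absorbed; equivalently, one can first close the estimate on $\|R^X-R^Y\|_{\frac{p}{2};I}$ alone (where all feedback terms do carry a small factor) and then substitute into the $\|X'-Y'\|_{p;I}$ bound. Either fix is routine, but you should be aware that the naive symmetric absorption as stated will not go through.

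A second minor omission: your displayed composition-stability estimates should also carry a $\varrho_p(\bme,\bmz)$ term on the right, since $(X,X')$ and $(Y,Y')$ live above different drivers (cf.\ the paper's Lemma~\ref{lemmacompwithsmoothfuncbounds}). You do acknowledge this later (``add and subtract\ldots''), but it already enters at the composition level.
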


\section{Pathwise optimal control}\label{SecPathwiseControl}

\subsection{Avoiding degeneracy}

Our set-up is the following. We fix a geometric rough path $\bmz = (\zeta,\zeta^{(2)}) \in \sC_g^{0,p}([0,T];\R^d)$ such that $\ver{\bmz}_{\opHol} \leq L$ for some $p \in [2,3)$ and $L > 0$. We consider, for each $\gamma \in \Cptvar([0,T];\R^k)$, the controlled dynamics
\begin{equation}\label{eq:RDEcontroldynamics}
\rd X^{t,x,\gamma}_s = b(X^{t,x,\gamma}_s,\gamma_s)\,\rd s + \lambda(X^{t,x,\gamma}_s,\gamma_s)\,\rd\bmz_s, \qquad \quad X^{t,x,\gamma}_t = x,
\end{equation}
driven by $\bmz$. We then consider the control problem with value function given by
\begin{equation}\label{eq:valuefuncnaive}
v(t,x) := \inf_{\gamma \in \Cptvar}J(t,x;\gamma)
\end{equation}
for $(t,x) \in [0,T] \times \R^m$, where the cost functional $J$ is defined as
\begin{equation}\label{eq:costfuncJ}
J(t,x;\gamma) := \int_t^Tf(X^{t,x,\gamma}_s,\gamma_s)\,\rd s + \int_t^T\psi(X^{t,x,\gamma}_s,\gamma_s)\,\rd\bmz_s + g(X^{t,x,\gamma}_T).
\end{equation}
Here $f \colon \R^m \times \R^k \to \R$, $\psi \colon \R^m \times \R^k \to \cL(\R^d;\R)$ and $g \colon \R^m \to \R$.

\begin{lemma}\label{lemmapsiintbound}
Suppose that $b \in \Lipb$, $\lambda \in C^3_b$ and $\psi \in C^2_b$. Then, for any $t,x$ and $\gamma$, we have that
\begin{equation}\label{eq:psiintbound}
\bigg|\int_t^T\psi(X^{t,x,\gamma}_s,\gamma_s)\,\rd\bmz_s\bigg| \leq C\Big(1 + \|\gamma\|_{\ptvartT}^{2(1 + p)}\Big),
\end{equation}
where the constant $C$ depends only on $b,\lambda,\psi,p,T$ and $L$.
\end{lemma}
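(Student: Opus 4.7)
\medskip
\noindent\textbf{Proof plan.} The strategy is to apply Proposition~\ref{proproughintegral} to the controlled rough path $(\psi(X,\gamma),\psi(X,\gamma)')$ on the single interval $[t,T]$, and then invoke the four estimates of Proposition~\ref{proproughestimates} to bound each piece in terms of $\|\gamma\|_{\ptvartT}$ alone.

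First I would note that, since $(X,X') = (X,\lambda(X,\gamma))$ is the solution of the RDE~\eqref{eq:RDEforXthm} furnished by Theorem~\ref{thmsolnofRDE} and $\psi \in C^2_b$, the composition $\psi(X,\gamma)$ is controlled by $\zeta$ with Gubinelli derivative $\psi(X,\gamma)' = D_x\psi(X,\gamma)\lambda(X,\gamma)$ (this is the analogue of~\eqref{eq:GubderivlambdaXgamma} with $\lambda$ replaced by $\psi$). Applying the estimate~\eqref{eq:roughintbound} on $[t,T]$ and using the triangle inequality yields
\begin{align*}
\bigg|\int_t^T\psi(X_s,\gamma_s)\,\rd\bmz_s\bigg| &\leq |\psi(X_t,\gamma_t)||\zeta_{t,T}| + \big|\psi(X,\gamma)'_t\zeta^{(2)}_{t,T}\big|\\
&\quad + C_p\Big(\big\|R^{\psi(X,\gamma)}\big\|_{\ptvartT}\|\zeta\|_{\pvartT} + \|\psi(X,\gamma)'\|_{\pvartT}\big\|\zeta^{(2)}\big\|_{\ptvartT}\Big).
\end{align*}
The first two terms on the right-hand side are harmless: $\psi$, $D_x\psi$ and $\lambda$ are all uniformly bounded, and the rough path contributions are controlled by $\|\zeta\|_{\opHol}T^{1/p} \leq LT^{1/p}$ and $\|\zeta^{(2)}\|_{\tpHol}T^{2/p} \leq LT^{2/p}$ respectively. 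So these contribute only to the constant in~\eqref{eq:psiintbound}.

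For the remaining two terms I would apply parts (i) and (ii) of Proposition~\ref{proproughestimates} on the subinterval $[t,T]$ (the estimates are of course valid on any subinterval), yielding
$$\|\psi(X,\gamma)'\|_{\pvartT} \lesssim \|X\|_{\pvartT} + \|\gamma\|_{\ptvartT}, \qquad \big\|R^{\psi(X,\gamma)}\big\|_{\ptvartT} \lesssim \|X\|_{\pvartT}^2 + \big\|R^X\big\|_{\ptvartT} + \|\gamma\|_{\ptvartT}.$$
Substituting in the bounds from parts (iii) and (iv), namely $\|X\|_{\pvartT} \lesssim 1 + \|\gamma\|_{\ptvartT}^{1+p}$ and $\|R^X\|_{\ptvartT} \lesssim 1 + \|\gamma\|_{\ptvartT}^{2+p}$, gives
$$\|\psi(X,\gamma)'\|_{\pvartT} \lesssim 1 + \|\gamma\|_{\ptvartT}^{1+p}, \qquad \big\|R^{\psi(X,\gamma)}\big\|_{\ptvartT} \lesssim 1 + \|\gamma\|_{\ptvartT}^{2(1+p)},$$
where the dominant $2(1+p)$ exponent comes from the $\|X\|_{\pvartT}^2$ contribution. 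Multiplying by $\|\zeta\|_{\pvartT}$ and $\|\zeta^{(2)}\|_{\ptvartT}$ (both bounded by constants depending on $L$, $T$, $p$) and combining everything yields~\eqref{eq:psiintbound}.

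The only mildly nontrivial point is tracking which power of $\|\gamma\|_{\ptvartT}$ is dominant: since $2(1+p) > 2+p > 1+p \geq 1$ for $p \geq 2$, the single term $\|X\|_{\pvartT}^2 \|\zeta\|_{\pvartT}$ inside the remainder estimate dictates the final exponent, and absorbs all other contributions. There is no substantive obstacle beyond careful bookkeeping of the powers that appear in Proposition~\ref{proproughestimates}.
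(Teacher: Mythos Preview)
Your proposal is correct and follows essentially the same approach as the paper: apply the rough integral estimate~\eqref{eq:roughintbound} on $[t,T]$, bound the initial terms using boundedness of $\psi$, $D_x\psi$, $\lambda$ and $\ver{\bmz}_{\opHol} \leq L$, and then invoke the four estimates of Proposition~\ref{proproughestimates} to identify $2(1+p)$ as the dominant exponent. The paper's version is more terse, simply stating ``Applying the estimates in Proposition~\ref{proproughestimates}, we deduce~\eqref{eq:psiintbound}'', whereas you have spelled out the bookkeeping explicitly.
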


\begin{proof}
By Theorem~\ref{thmsolnofRDE}, the RDE \eqref{eq:RDEcontroldynamics} has a unique solution $(X^{t,x,\gamma},\lambda(X^{t,x,\gamma},\gamma)) \in \sD^p_\zeta$ for any $\gamma \in \Cptvar$, and the integral $\int_t^\cdot\psi(X^{t,x,\gamma}_s,\gamma_s)\,\rd\bmz_s$ is then well-defined. By \eqref{eq:roughintbound}, we have that
\begin{align*}
\bigg|\int_t^T\psi(X^{t,x,\gamma}_s,\gamma_s)\,\rd\bmz_s\bigg| \leq C_p\Big(&|\psi(x,\gamma_t)\zeta_{t,T}| + \big\|R^{\psi(X,\gamma)}\big\|_{\ptvartT}\|\zeta\|_{\pvartT}\\
&+ \big|\psi(X^{t,x,\gamma},\gamma)'_t\zeta^{(2)}_{t,T}\big| + \big\|\psi(X,\gamma)'\big\|_{\pvartT}\big\|\zeta^{(2)}\big\|_{\ptvartT}\Big).
\end{align*}
Applying the estimates in Proposition~\ref{proproughestimates}, we deduce \eqref{eq:psiintbound}.
\end{proof}

\begin{remark}
The choice to measure the regularity of the controls using $q$-variation for $q = \frac{p}{2}$ ensures that $\frac{1}{p} + \frac{1}{q} > 1$, so that the corresponding Young integral of $\gamma$ against $\zeta$ always exists. It may be tempting to wonder whether the result of Lemma~\ref{lemmapsiintbound} could still hold using a bound based on the $q$-variation of controls for a larger value of $q$. However, this is not true in general. Indeed, if $\frac{1}{p} + \frac{1}{q} < 1$, then one can construct a sequence $\{(\zeta^n,\gamma^n)\}_{n \geq 1}$ of pairs of bounded variation paths such that $\|\zeta^n\|_p = 1 = \|\gamma^n\|_q$ for all $n \geq 1$, but such that $\int_0^T\gamma^n_s\,\rd\zeta^n_s \to \infty$ as $n \to \infty$, which would contradict \eqref{eq:psiintbound}.
\end{remark}

Preventing degeneracy of this control problem can essentially be thought of as preventing the size of the rough integral above from becoming arbitrarily large. Lemma~\ref{lemmapsiintbound} shows that one can control the size of this integral by the $\frac{p}{2}$-variation of the controls. However, since controls can exhibit arbitrarily large $\frac{p}{2}$-variation whilst remaining uniformly bounded, the cost functional in \eqref{eq:costfuncJ} is not able to adequately penalise this variation. In view of Example~\ref{naiveexample}, for a typical choice of $\psi$, one should expect the value function in \eqref{eq:valuefuncnaive} to be simply given by
$$v(t,x) = -\infty \qquad \text{for all} \quad (t,x) \in [0,T) \times \R^m.$$
We also point out that merely restricting the class of controls $\gamma$ to, say, smooth functions does nothing to resolve this problem.

The estimate in \eqref{eq:psiintbound} implies that one could prevent degeneracy by imposing a uniform bound on the $\frac{p}{2}$-variation of the controls but, as appreciated in Diehl et al.~\cite{DiehlFrizGassiat2017}, this would not be a very natural condition. Instead, we first propose to introduce an artificial cost in order to penalise this variation.

\begin{definition}\label{defnregulariser}
Let $\cS \subseteq \Cptvar$ be a Banach space of functions from $[0,T] \to \R^k$ (with a possibly stronger topology). We shall call a function $\beta \colon \simplex \times \Cptvar \to \R \cup \{+\infty\}$ a \emph{regularising cost on} $\cS$, if it is bounded below, takes the value $+\infty$ on $\simplex \times (\Cptvar \setminus \cS)$, and, for every $0 \leq r < t \leq T$, the map $\beta_{r,t} \colon \cS \to \R$ is continuous, and satisfies
\begin{equation}\label{eq:regulariser}
\frac{\beta_{r,t}(\gamma)}{\|\gamma\|_{\ptvarrt}^{2(1 + p)}}\, \longrightarrow\, \infty \qquad \text{as} \qquad \|\gamma\|_{\ptvarrt}\, \longrightarrow\, \infty.
\end{equation}
\end{definition}

An example of such a cost on $\Cptvar$ is given by
\begin{equation}\label{eq:exampleregularcost}
\beta_{r,t}(\gamma) = \varepsilon\|\gamma\|_{\ptvarrt}^q
\end{equation}
for any $\varepsilon > 0$ and $q > 2(1 + p)$.

\begin{remark}
We point out that the power in the denominator in \eqref{eq:regulariser} is sufficient but by no means necessary. This choice is a result of the estimate in \eqref{eq:psiintbound}, which we do not expect to be sharp.
\end{remark}

Instead of the naive value function in \eqref{eq:valuefuncnaive}, we consider the modified function given by\footnote{The restriction to controls $\gamma \in \gCptvar \subset \Cptvar$ is negligible. Indeed, we recall that $\Cqtvar \subset \gCptvar$ for any $q \in [2,p)$.}
\begin{equation}\label{eq:modifiedvaluefunc}
V(t,x) := \inf_{\gamma \in \gCptvar}\big\{J(t,x;\gamma) + \beta_{t,T}(\gamma)\big\},
\end{equation}
for some regularising cost $\beta$. In practice, the justification of the introduction of this `artificial cost' depends on the application one has in mind; we will see examples of this later in Sections~\ref{SecExamples} and \ref{SecFiltReformulation}.

The following proposition demonstrates the nondegeneracy of this modified control problem.

\begin{proposition}
Under the natural assumption that $f$ and $g$ are bounded below, the same is true of the value function $V$.
\end{proposition}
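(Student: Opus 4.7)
The plan is to show that the $\beta_{t,T}(\gamma)$ term dominates the only potentially unbounded-below component of $J(t,x;\gamma)$, namely the rough integral $\int_t^T\psi(X^{t,x,\gamma}_s,\gamma_s)\,\rd\bmz_s$. The three remaining pieces are controlled immediately: the Lebesgue integral $\int_t^T f(X^{t,x,\gamma}_s,\gamma_s)\,\rd s$ is bounded below by $(T-t)\inf f$, the terminal term $g(X^{t,x,\gamma}_T)$ by $\inf g$, and the regularising cost $\beta_{t,T}(\gamma)$ by its own definition. So the entire argument reduces to absorbing the rough integral into $\beta_{t,T}(\gamma)$.

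To do this, I would invoke Lemma~\ref{lemmapsiintbound}, which yields a constant $C = C(b,\lambda,\psi,p,T,L)$ such that
\begin{equation*}
\int_t^T\psi(X^{t,x,\gamma}_s,\gamma_s)\,\rd\bmz_s \;\geq\; -C\bigl(1 + \|\gamma\|_{\ptvartT}^{2(1+p)}\bigr)
\end{equation*}
for every $\gamma \in \gCptvar$. The defining property \eqref{eq:regulariser} of a regularising cost then gives an $M > 0$ (depending only on $C$ and on $\beta$ at the fixed endpoint pair $(t,T)$) such that
\begin{equation*}
\beta_{t,T}(\gamma) \;\geq\; 2C\,\|\gamma\|_{\ptvartT}^{2(1+p)} \qquad \text{whenever} \quad \|\gamma\|_{\ptvartT} \geq M.
\end{equation*}

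I would then split by the size of $\|\gamma\|_{\ptvartT}$. In the regime $\|\gamma\|_{\ptvartT} \geq M$, the bounds combine to give $\int_t^T\psi\,\rd\bmz + \beta_{t,T}(\gamma) \geq C\|\gamma\|_{\ptvartT}^{2(1+p)} - C \geq -C$. In the regime $\|\gamma\|_{\ptvartT} < M$, the rough integral is bounded below by $-C(1+M^{2(1+p)})$, and $\beta_{t,T}(\gamma)$ is bounded below by assumption (let $\beta_* := \inf_{\simplex \times \gCptvar}\beta$). Adding the trivial lower bounds on $\int f$ and $g$ then yields
\begin{equation*}
J(t,x;\gamma) + \beta_{t,T}(\gamma) \;\geq\; (T-t)\inf f + \inf g + \beta_* - C\bigl(1 + M^{2(1+p)}\bigr),
\end{equation*}
uniformly over $\gamma \in \gCptvar$, so that $V(t,x)$ is bounded below. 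The only subtle point is that the constant $C$ from Lemma~\ref{lemmapsiintbound} depends on $T$ and $L$ but not on $\gamma$, and that the threshold $M$ exists by the limit condition \eqref{eq:regulariser}; there is no genuine obstacle, which is precisely the reason the definition of regularising cost was designed around the exponent $2(1+p)$ appearing in Lemma~\ref{lemmapsiintbound}.
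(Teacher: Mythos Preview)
Your proposal is correct and follows essentially the same approach as the paper: both combine Lemma~\ref{lemmapsiintbound} with the defining growth condition~\eqref{eq:regulariser} of a regularising cost to show that $\beta_{t,T}(\gamma)$ absorbs the rough integral term, leaving only the bounded-below pieces $\int f$, $g$, and the residual part of $\beta$. The only difference is presentational: the paper condenses your explicit case split (large versus small $\|\gamma\|_{\ptvartT}$) into the single line $\big|\int_t^T\psi\,\rd\bmz_s\big| \leq C + \tfrac{1}{2}\beta_{t,T}(\gamma)$ for a new constant $C$, which is exactly what your two regimes together establish.
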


\begin{proof}
It follows from Lemma~\ref{lemmapsiintbound} and \eqref{eq:regulariser} that
$$\bigg|\int_t^T\psi(X^{t,x,\gamma}_s,\gamma_s)\,\rd\bmz_s\bigg| \leq C + \frac{\beta_{t,T}(\gamma)}{2}$$
for some new constant $C$, and hence that
$$J(t,x;\gamma) + \beta_{t,T}(\gamma) \geq \int_t^Tf(X^{t,x,\gamma}_s,\gamma_s)\,\rd s + g(X^{t,x,\gamma}_T) + \frac{\beta_{t,T}(\gamma)}{2} - C.$$
Since the cost functions $f,g$ and $\beta$ are all bounded below, the result follows.
\end{proof}

\subsection{Recovering dynamic programming}

We have seen that one can resolve the degeneracy of the optimal control problem by introducing an artificial cost to penalise the variation of the controls. In Definition~\ref{defnregulariser} we introduced a rather general class of cost functions which provide a sufficient penalisation. The problem with such cost functions, such as the one in \eqref{eq:exampleregularcost}, is that typically they are not additive, in the sense that $\beta_{r,s} + \beta_{s,t} \neq \beta_{r,t}$. A consequence of this is that the corresponding control problem is no longer dynamic. That is, the value function in \eqref{eq:modifiedvaluefunc} is not generally amenable to dynamic programming, and thus one cannot necessarily write down a PDE associated with the control problem. Our next aim will be to demonstrate the existence of an additive regularising cost on a more regular space of controls, which allows dynamic programming to be recovered.

\begin{lemma}
Let $\beta$ be a regularising cost on $\gCptvar$. Let $\cS \subseteq \gCptvar$ be a subset which contains all smooth functions from $[0,T] \to \R^k$. Then the value function defined in \eqref{eq:modifiedvaluefunc} satisfies
\begin{equation}\label{eq:restrictcontrols}
V(t,x) = \inf_{\gamma \in \cS}\big\{J(t,x;\gamma) + \beta_{t,T}(\gamma)\big\}.
\end{equation}
\end{lemma}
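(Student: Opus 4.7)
The plan is to prove the two inequalities separately, with the nontrivial direction being a density argument based on the fact that smooth paths are dense in $\gCptvar$ by definition, combined with the continuity properties assembled in Section~\ref{SecRoughPathPrelim}.

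\smallskip

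\textbf{Easy direction.} Since $\cS \subseteq \gCptvar$, the infimum in \eqref{eq:restrictcontrols} is taken over a smaller collection than in \eqref{eq:modifiedvaluefunc}, so one immediately has $V(t,x) \leq \inf_{\gamma \in \cS}\{J(t,x;\gamma) + \beta_{t,T}(\gamma)\}$.

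\smallskip

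\textbf{Reverse direction via approximation.} Fix an arbitrary $\gamma \in \gCptvar$; we wish to show that $J(t,x;\gamma) + \beta_{t,T}(\gamma)$ can be approached from above by values $J(t,x;\gamma^n) + \beta_{t,T}(\gamma^n)$ with $\gamma^n \in \cS$. By definition of $\gCptvar$ as the closure of smooth paths in the $\frac{p}{2}$-variation topology (together with the initial value), we may pick smooth $\gamma^n \to \gamma$, and these smooth paths lie in $\cS$ by hypothesis. In particular $\|\gamma^n - \gamma\|_{\frac{p}{2}} \to 0$ and $\|\gamma^n - \gamma\|_\infty \to 0$ (the latter since the $\frac{p}{2}$-variation controls increments and the initial values converge), and the sequence $\|\gamma^n\|_{\frac{p}{2}}$ is uniformly bounded by some $M > 0$.

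\smallskip

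\textbf{Continuity of $J$.} Write $X^n := X^{t,x,\gamma^n}$ and $X := X^{t,x,\gamma}$, both driven by the same $\bmz$ from the same initial condition $x$ at time $t$. Proposition~\ref{proproughstability} yields
\[
\|(X^n)' - X'\|_p + \big\|R^{X^n} - R^X\big\|_{\frac{p}{2}} \leq C\big(\|\gamma^n - \gamma\|_\infty + \|\gamma^n - \gamma\|_{\frac{p}{2}}\big) \to 0,
\]
and since $X^n_s - X_s = (X^n)'_t \zeta_{t,s} - X'_t \zeta_{t,s} + (R^{X^n} - R^X)_{t,s}$ with $(X^n)'_t - X'_t = \lambda(x,\gamma^n_t) - \lambda(x,\gamma_t)$, we also get $\|X^n - X\|_\infty \to 0$. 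Continuity of $f$ and $g$ (and boundedness of the sequence $(X^n,\gamma^n)$) then give $\int_t^T f(X^n_s,\gamma^n_s)\,\rd s \to \int_t^T f(X_s,\gamma_s)\,\rd s$ and $g(X^n_T) \to g(X_T)$ by dominated convergence. The rough integral term is handled by \eqref{eq:roughstabilityintegral}, which gives
\[
\bigg|\int_t^T \psi(X^n_s,\gamma^n_s)\,\rd\bmz_s - \int_t^T \psi(X_s,\gamma_s)\,\rd\bmz_s\bigg| \to 0.
\]
Summing the three contributions, $J(t,x;\gamma^n) \to J(t,x;\gamma)$.

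\smallskip

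\textbf{Continuity of $\beta$ and conclusion.} Since $\beta$ is a regularising cost on $\gCptvar$, the map $\beta_{t,T}\colon\gCptvar \to \R$ is continuous at $\gamma$, so $\beta_{t,T}(\gamma^n) \to \beta_{t,T}(\gamma)$. Therefore
\[
\inf_{\vartheta \in \cS}\big\{J(t,x;\vartheta) + \beta_{t,T}(\vartheta)\big\} \leq J(t,x;\gamma^n) + \beta_{t,T}(\gamma^n) \longrightarrow J(t,x;\gamma) + \beta_{t,T}(\gamma),
\]
and taking the infimum over $\gamma \in \gCptvar$ on the right supplies the opposite inequality $\inf_{\vartheta \in \cS}\{\cdots\} \leq V(t,x)$.

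\smallskip

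The main technical point is ensuring that the stability estimate of Proposition~\ref{proproughstability}, which is phrased in the controlled-rough-path norm, transfers to the quantities actually appearing in $J$; this is immediate for the Lebesgue-type terms once one upgrades convergence to $\|\cdot\|_\infty$, and is handled directly by \eqref{eq:roughstabilityintegral} for the rough integral term. No further subtlety arises since the smooth approximants are automatically in $\cS$ by assumption.
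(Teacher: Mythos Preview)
Your proof is correct and follows essentially the same approach as the paper's: approximate an arbitrary $\gamma \in \gCptvar$ by smooth paths (which lie in $\cS$ by assumption), then invoke the continuity of $\beta_{t,T}$ from Definition~\ref{defnregulariser} together with the stability estimates of Proposition~\ref{proproughstability} to conclude that $J(t,x;\cdot) + \beta_{t,T}(\cdot)$ is continuous along this sequence. The paper compresses this into two sentences, whereas you have spelled out the decomposition of $J$ into its three terms and verified convergence of each separately, but the underlying argument is identical.
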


\begin{proof}
By definition, for any $\gamma \in \gCptvar$, there exists a sequence of smooth controls $\{\gamma^n\}_{n \geq 1}$ such that $\|\gamma^n - \gamma\|_\infty + \|\gamma^n - \gamma\|_{\frac{p}{2}} \to 0$ as $n \to \infty$. The result then follows from the continuity of $\beta_{t,T}$ and the stability estimates in Proposition~\ref{proproughstability}.
\end{proof}

In particular, \eqref{eq:restrictcontrols} holds with $\cS = W^{1,q}$ for any $q \geq 1$, where $W^{1,q} = W^{1,q}([0,T];\R^k)$ denotes the usual Sobolev space. We recall the continuous embeddings $W^{1,q} \hookrightarrow \cC^{1\text{-var}} \hookrightarrow \gCptvar$, exhibited by the inequalities
\begin{equation*}
T^{\frac{q - 1}{q}}\bigg(\int_r^t|\dot{\gamma}_s|^q\,\rd s\bigg)^{\hspace{-2pt}\frac{1}{q}} \geq \|\gamma\|_{\onevarrt} \geq \|\gamma\|_{\ptvarrt},
\end{equation*}
where we write $\dot{\gamma}$ for the unique element $\dot{\gamma} \in L^q([0,T];\R^k)$ such that $\rd\gamma_s = \dot{\gamma}_s\,\rd s$. It follows that, for any $\varepsilon > 0$ and $q > 2(1 + p)$, the choice
$$\beta_{r,t}(\gamma) = \varepsilon\int_r^t|\dot{\gamma}_s|^q\,\rd s$$
for $\gamma \in W^{1,q}$ (and $\beta(\gamma) \equiv \infty$ otherwise), defines a regularising cost on $W^{1,q}$. Moreover, $\beta$ is additive, in the sense that $\beta_{r,s} + \beta_{s,t} = \beta_{r,t}$ for all $r \leq s \leq t$; in other words, for each $\gamma \in W^{1,q}$, the two-parameter functional $\beta(\gamma) \colon \simplex \to \R$ is uniquely characterised by the path $t \mapsto \beta_{0,t}(\gamma)$. With this choice of $\beta$, we can now write
\begin{equation*}
V(t,x) = \inf_{a \in \R^k}v(t,x,a)
\end{equation*}
where, for $(t,x,a) \in [0,T] \times \R^m \times \R^k$,
\begin{equation}\label{eq:newvaluefunc}
v(t,x,a) := \inf_{u \in L^q}\bigg\{J(t,x;\gamma^{t,a,u}) + \varepsilon\int_t^T|u_s|^q\,\rd s\bigg\},
\end{equation}
with $\gamma^{t,a,u}_r := a + \int_t^ru_s\,\rd s$ for $r \in [t,T]$. The function $v$ is both nondegenerate, and satisfies the following:

\begin{proposition}[Dynamic programming principle]\label{propDPP}
Let us write $X^{t,x,a,u} := X^{t,x,\gamma^{t,a,u}}$. Then, for any $t,x,a$ and $r \in [t,T]$, with $v$ as in \eqref{eq:newvaluefunc}, we have
\begin{align*}
v(t,x,a) = \inf_{u \in L^q}\bigg\{&v(r,X^{t,x,a,u}_r,\gamma^{t,a,u}_r) + \int_t^rf(X^{t,x,a,u}_s,\gamma^{t,a,u}_s)\,\rd s\\
&\hspace{35pt} + \int_t^r\psi(X^{t,x,a,u}_s,\gamma^{t,a,u}_s)\,\rd\bmz_s + \varepsilon\int_t^r|u_s|^q\,\rd s\bigg\}.
\end{align*}
\end{proposition}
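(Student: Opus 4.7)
The plan is to prove the two inequalities ``$\leq$'' and ``$\geq$'' separately by concatenating and restricting controls, using two preliminary ingredients: a flow property for the controlled RDE \eqref{eq:RDEcontroldynamics}, and additivity of every term of the cost over $[t,r]\cup[r,T]$.

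First I would establish the flow property. Fix $u \in L^q([t,T];\R^k)$, set $\tilde{x} := X^{t,x,a,u}_r$ and $\tilde{a} := \gamma^{t,a,u}_r$, and write $u^- := u|_{[t,r]}$, $u^+ := u|_{[r,T]}$. The restriction of $(X^{t,x,a,u}, \lambda(X^{t,x,a,u},\gamma^{t,a,u}))$ to $[r,T]$ is a controlled rough path in $\sD^p_\zeta([r,T];\R^m)$ satisfying \eqref{eq:RDEforXthm} with driver $\bmz|_{[r,T]}$, control $\gamma^{r,\tilde{a},u^+}$ and initial datum $\tilde{x}$ at time $r$; by uniqueness (Theorem~\ref{thmsolnofRDE}) it coincides with $(X^{r,\tilde{x},\tilde{a},u^+}, \lambda(X^{r,\tilde{x},\tilde{a},u^+},\gamma^{r,\tilde{a},u^+}))$. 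Conversely, given $u^- \in L^q([t,r])$ and $u^+ \in L^q([r,T])$, their concatenation $u := u^- * u^+$ lies in $L^q([t,T])$ and produces an admissible continuous control $\gamma$ (the two pieces agree at $r$ by construction); the same uniqueness argument ensures that the corresponding trajectory restricts correctly to each subinterval. The two Lebesgue integrals in $J$ and the regularising $L^q$ term split trivially, and the rough integral $\int_t^T \psi(X,\gamma)\,\rd\bmz$ splits as $\int_t^r + \int_r^T$ directly from the Riemann-sum definition in Proposition~\ref{proproughintegral}. Together these yield the decomposition
\begin{equation*}
J(t,x;\gamma^{t,a,u}) + \varepsilon\int_t^T|u_s|^q\,\rd s = \Phi(u^-) + \Psi(\tilde{x},\tilde{a};u^+),
\end{equation*}
where $\Phi(u^-)$ collects all cost contributions on $[t,r]$ and $\Psi(\tilde{x},\tilde{a};u^+) := J(r,\tilde{x};\gamma^{r,\tilde{a},u^+}) + \varepsilon\int_r^T|u^+_s|^q\,\rd s$.

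The ``$\geq$'' inequality then follows by observing that $\Psi(\tilde{x},\tilde{a};u^+) \geq v(r,\tilde{x},\tilde{a})$ for every $u^+$, and taking the infimum of both sides of the decomposition over $u \in L^q([t,T])$, which factorises through the joint infimum over $(u^-,u^+)$. For the reverse inequality I would fix $u^- \in L^q([t,r])$ and $\delta > 0$, select $u^+ \in L^q([r,T])$ with $\Psi(\tilde{x},\tilde{a};u^+) \leq v(r,\tilde{x},\tilde{a}) + \delta$ (which exists by definition of $v$), and plug in $u := u^- * u^+$ to obtain $v(t,x,a) \leq \Phi(u^-) + v(r,\tilde{x},\tilde{a}) + \delta$; letting $\delta \downarrow 0$ and taking the infimum over $u^-$ closes the loop.

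The only step requiring any real care is the flow property at the level of controlled rough paths, where one must verify that restriction preserves membership in $\sD^p_\zeta$ with the correct Gubinelli derivative and that concatenation of $L^q$ velocities yields an admissible $W^{1,q}$ control whose associated trajectory matches, via uniqueness, the two separately-solved pieces. No measurable-selection argument is needed, since once $\bmz$ is fixed the problem is entirely deterministic, which notably simplifies matters compared with the classical stochastic DPP.
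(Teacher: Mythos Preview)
Your proposal is correct and follows essentially the same approach as the paper, which simply remarks that the result follows the standard proof of the dynamic programming principle (Theorem~2.1 in \cite[Chapter~4]{YongZhou1999}), noting that the rough integrals cause no additional difficulty. In fact you have supplied more detail than the paper itself, and your observation that no measurable-selection argument is needed because the problem is deterministic once $\bmz$ is fixed is precisely the simplification the paper is alluding to.
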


This result follows the same proof as that of Theorem~2.1 in \cite[Chapter~4]{YongZhou1999}. In particular, the rough integrals appearing in the controlled dynamics and value function do not cause any additional difficulty.

\subsection{A generalised dynamic control problem}

To summarise the previous subsections, we propose to reformulate the naive control problem, given originally by \eqref{eq:RDEcontroldynamics}--\eqref{eq:costfuncJ}, to resolve the degeneracy problem whilst retaining enough dynamic structure to retain dynamic programming, by restricting to a sufficiently regular space of controls, and introducing an additive artificial cost function, written in terms of the derivative of the controls. Rather than merely \eqref{eq:RDEcontroldynamics}, by including $\gamma$ as part of the state trajectory, we instead consider the controlled dynamics
\begin{align}
\rd X^{t,x,a,u}_s &= b(X^{t,x,a,u}_s,\gamma^{t,a,u}_s)\,\rd s + \lambda(X^{t,x,a,u}_s,\gamma^{t,a,u}_s)\,\rd\bmz_s, & X^{t,x,a,u}_t &= x,\label{eq:roughXdynamics}\\
\rd\gamma^{t,a,u}_s &= h(\gamma^{t,a,u}_s,u_s)\,\rd s, & \gamma^{t,a,u}_t &= a.\label{eq:controldynamics}
\end{align}
For generality, we have introduced the function $h \colon \R^k \times U \to \R^k$, where here $(U,\|\cdot\|_U)$ is a finite dimensional Banach space, and the control $u$ belongs to the space $\cU$ of bounded measurable functions $u \colon [0,T] \to U$.

We shall henceforth consider the cost functional
\begin{align*}
J(t,x,a;u) := \int_t^Tf&(X^{t,x,a,u}_s,\gamma^{t,a,u}_s,u_s)\,\rd s\\
&+ \int_t^T\psi(X^{t,x,a,u}_s,\gamma^{t,a,u}_s)\,\rd\bmz_s + g(X^{t,x,a,u}_T,\gamma^{t,a,u}_T)
\end{align*}
and the value function
\begin{equation}\label{eq:roughvaluefunc}
v(t,x,a) := \inf_{u \in \cU}J(t,x,a;u),
\end{equation}
where we have absorbed a regularising cost into the function $f \colon \R^m \times \R^k \times U \to \R$, which crucially is now also allowed to depend on $u$. There is also no harm in allowing the terminal cost $g \colon \R^m \times \R^k \to \R$ to depend on the terminal value of $\gamma$.

\begin{remark}
If the function $h = h(a,u)$ were bounded in $u$, then setting $\tilde{X} = (X,\gamma)$ would now put us into a comparable setting to Diehl et al.~\cite{DiehlFrizGassiat2017}. However, it is more natural here to allow $h$ to be unbounded in $u$, meaning that \cite[Theorem~5]{DiehlFrizGassiat2017} does not directly apply\footnote{This boundedness condition is not stated explicitly in \cite{DiehlFrizGassiat2017}, but is necessary for the application of \cite[Corollary~III.3.6]{BardiCapuzzoDolcetta2008} in the proof of \cite[Theorem~5]{DiehlFrizGassiat2017}; see Assumption (A\textsubscript{1}) in \cite[Chapter~III]{BardiCapuzzoDolcetta2008}.}. Moreover, in \cite{DiehlFrizGassiat2017} the cost functions $f$ and $g$ are assumed to be bounded, but we will relax this assumption in the current work.

The inclusion of the integral $\int\psi(X,\gamma)\,\rd\bmz$ in the value function also takes us outside the setting of \cite{DiehlFrizGassiat2017}. This term could be included in the terminal cost by setting $\tilde{X} = (X,\gamma,Z)$ and $\tilde{g}(x,a,z) = g(x,a) + z$ with $Z_r = z + \int_t^r\psi(X,\gamma)\,\rd\bmz$, albeit with the additional complication that the terminal cost $\tilde{g}$ would then be neither bounded from above nor below.
\end{remark}

\begin{assumption}\label{assumptionblambdamufgh}
We assume that
\begin{itemize}
\item $b \in \Lipb$ and $\lambda,\psi \in C^3_b$,
\item $f = f(x,a,u)$ and $g = g(x,a)$ are continuous, bounded below, and Lipschitz continuous in $(x,a)$, uniformly in $u$,
\item $h = h(a,u)$ is continuous, Lipschitz in $a$, uniformly in $u$, and is bounded in $a$, locally uniformly in $u$, and moreover, for some $\delta \geq 1$, satisfies
\begin{equation}\label{eq:hgrowth}
\sup_{a \in \R^k}\frac{\big|h(a,u)\big|}{\|u\|_U^\delta} \longrightarrow 0 \qquad \text{as} \quad\ \|u\|_U\, \longrightarrow\, \infty,
\end{equation}
\item with the same $\delta$ as in \eqref{eq:hgrowth}, the running cost $f$ satisfies
\begin{equation}\label{eq:fcoercivity}
\inf_{x \in \R^m,\, a \in \R^k}\frac{f(x,a,u)}{\|u\|_U^{2(1 + p)\delta}}\, \longrightarrow\, \infty \qquad \text{as} \quad\ \|u\|_U\, \longrightarrow\, \infty.
\end{equation}
\end{itemize}
\end{assumption}

\begin{remark}
One could in principle also allow the drift coefficient $b$ to depend on the control $u$. In this case it is less straightforward to obtain solutions to the RDE \eqref{eq:roughXdynamics}, but the necessary technical results have already been established in \cite{DiehlFrizGassiat2017}.
\end{remark}

The following lemma demonstrates the nondegeneracy of our newly formulated control problem.

\begin{lemma}\label{lemmapsiintboundCf}
For any $t,x,a$ and $u$, we have that
\begin{equation}\label{eq:psiboundCintf}
\bigg|\int_t^T\psi(X^{t,x,a,u}_s,\gamma^{t,a,u}_s)\,\rd\bmz_s\bigg| \leq C + \frac{1}{2}\int_t^Tf(X^{t,x,a,u}_s,\gamma^{t,a,u}_s,u_s)\,\rd s,
\end{equation}
where the constant $C$ depends only on $b,\lambda,\psi,h,p,T$ and $L$.
\end{lemma}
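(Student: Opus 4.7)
The plan is to chain Lemma~\ref{lemmapsiintbound} with the ODE \eqref{eq:controldynamics} and then exploit the two growth conditions in Assumption~\ref{assumptionblambdamufgh} to absorb everything into $\frac{1}{2}\int f\,\rd s$ plus a deterministic constant.

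First I would apply Lemma~\ref{lemmapsiintbound} directly to the pair $(X^{t,x,a,u},\gamma^{t,a,u})$, which gives
$$\bigg|\int_t^T\psi(X^{t,x,a,u}_s,\gamma^{t,a,u}_s)\,\rd\bmz_s\bigg| \leq C_0\Big(1 + \|\gamma^{t,a,u}\|_{\ptvartT}^{2(1+p)}\Big)$$
for a constant $C_0 = C_0(b,\lambda,\psi,p,T,L)$. Since $\gamma^{t,a,u}$ is absolutely continuous with $\dot{\gamma}^{t,a,u}_s = h(\gamma^{t,a,u}_s,u_s)$, it has finite $1$-variation equal to $\int_t^T|h(\gamma^{t,a,u}_s,u_s)|\,\rd s$, and the elementary inequality $\|\cdot\|_{\frac{p}{2}} \leq \|\cdot\|_1$ (for $p \geq 2$) together with Jensen's inequality yields
$$\|\gamma^{t,a,u}\|_{\ptvartT}^{2(1+p)} \leq T^{2(1+p)-1}\int_t^T\big|h(\gamma^{t,a,u}_s,u_s)\big|^{2(1+p)}\,\rd s.$$

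Next I would use the growth hypothesis \eqref{eq:hgrowth}: given any $\kappa > 0$, there exists a constant $M_\kappa$ such that $|h(a,u)| \leq \kappa\|u\|_U^\delta + M_\kappa$ uniformly in $(a,u)$, from which $|h(a,u)|^{2(1+p)} \leq 2^{2(1+p)-1}\big(\kappa^{2(1+p)}\|u\|_U^{2(1+p)\delta} + M_\kappa^{2(1+p)}\big)$. Similarly, from the coercivity \eqref{eq:fcoercivity} and the lower boundedness of $f$, for any $\kappa' > 0$ there exists a constant $M_{\kappa'}'$ with
$$\|u\|_U^{2(1+p)\delta} \leq \kappa'\, f(x,a,u) + M_{\kappa'}'\qquad\text{for all } (x,a,u)\in\R^m\times\R^k\times U.$$
Substituting the $h$-bound into the Jensen estimate, then substituting the $f$-bound, yields an inequality of the form
$$C_0\|\gamma^{t,a,u}\|_{\ptvartT}^{2(1+p)} \leq K\kappa^{2(1+p)}\kappa'\int_t^Tf(X^{t,x,a,u}_s,\gamma^{t,a,u}_s,u_s)\,\rd s + K',$$
for constants $K,K'$ depending only on $b,\lambda,\psi,h,p,T,L$ and $\kappa,\kappa'$.

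Finally, I choose $\kappa,\kappa'$ small enough that $K\kappa^{2(1+p)}\kappa' \leq \frac{1}{2}$; then the constants $M_\kappa,M_{\kappa'}'$ become fixed quantities depending only on the data listed in the statement, and combining with the additive $C_0$ from step one produces the desired estimate \eqref{eq:psiboundCintf}. The proof is essentially mechanical; the only point that requires a little care is lining up the exponents so that the powers match exactly --- the power $2(1+p)\delta$ in \eqref{eq:fcoercivity} is precisely what is needed to accommodate the $\delta$-growth of $h$ raised to the $2(1+p)$-th power coming from the Jensen step, which is why the two conditions in Assumption~\ref{assumptionblambdamufgh} are calibrated as they are.
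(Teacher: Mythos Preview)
Your proposal is correct and follows essentially the same route as the paper: apply Lemma~\ref{lemmapsiintbound}, pass from $\frac{p}{2}$-variation to $1$-variation via $\dot\gamma = h(\gamma,u)$, use H\"older/Jensen to raise the $L^1$ bound to the $2(1+p)$-th power, and then invoke \eqref{eq:hgrowth} and \eqref{eq:fcoercivity}. The paper compresses your $\kappa,\kappa'$ step into a single sentence, whereas you make the ``choose the small parameters first, then fix the residual constants'' argument explicit; the only cosmetic difference is your use of Jensen where the paper cites H\"older (these give the identical exponent $2(1+p)-1$ on $T$). One small point: your $M_{\kappa'}'$ in fact depends on the coercivity rate and lower bound of $f$, so strictly speaking $f$ should appear in the list of dependencies for $C$ --- the paper's own statement omits it as well, so you are in good company, but it is worth being aware of.
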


\begin{proof}
By \eqref{eq:psiintbound}, \eqref{eq:controldynamics} and H\"older's inequality, we have that
\begin{align*}
&\bigg|\int_t^T\psi(X^{t,x,a,u}_s,\gamma^{t,a,u}_s)\,\rd\bmz_s\bigg| \leq C\Big(1 + \|\gamma^{t,a,u}\|_{\ptvartT}^{2(1 + p)}\Big) \leq C\Big(1 + \|\gamma^{t,a,u}\|_{\onevartT}^{2(1 + p)}\Big)\\
&= C\bigg(1 + \bigg(\int_t^T\big|h(\gamma^{t,a,u}_s,u_s)\big|\,\rd s\bigg)^{\hspace{-2pt}2(1 + p)}\bigg) \leq C\bigg(1 + T^{\frac{2(1 + p)}{p'}}\int_t^T\big|h(\gamma^{t,a,u}_s,u_s)\big|^{2(1 + p)}\,\rd s\bigg)
\end{align*}
where $p'$ is the H\"older conjugate of $2(1 + p)$. Then, by \eqref{eq:hgrowth} and \eqref{eq:fcoercivity} (noting that, since $U$ is finite dimensional, $h$ is uniformly bounded on bounded subsets of $U$), we can ensure that \eqref{eq:psiboundCintf} holds for a new constant $C$.
\end{proof}

\begin{corollary}\label{corollaryrestrctrsoncpt}
Let $K$ be a compact subset of $\R^m \times \R^k$. There exists an $M > 0$ such that, when taking the infimum over $u \in \cU$ in \eqref{eq:roughvaluefunc} for $(t,x,a) \in [0,T] \times K$, one may restrict to controls $u$ satisfying $\|\gamma^{t,a,u}\|_{\frac{p}{2}} \leq M$.
\end{corollary}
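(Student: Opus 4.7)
The strategy has two components: (i) a uniform upper bound $V_0$ on $v$ over $[0,T]\times K$, obtained by exhibiting a suboptimal control whose cost is uniformly bounded; and (ii) a lower bound showing that any control $u$ generating a trajectory $\gamma^{t,a,u}$ of large $\frac{p}{2}$-variation necessarily produces $J(t,x,a;u)>V_0$ and is therefore discardable.

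For (i), I would use the constant control $u\equiv 0$. Since $h$ is bounded in $a$ locally uniformly in $u$, the function $h(\cdot,0)$ is uniformly bounded, so $\gamma^{t,a,0}$ remains in a bounded region depending only on $K$ and $\|h(\cdot,0)\|_\infty$, and in particular $\|\gamma^{t,a,0}\|_{\ptvartT}$ is uniformly bounded. Theorem~\ref{thmsolnofRDE} and Proposition~\ref{proproughestimates} then yield uniform bounds on $X^{t,x,a,0}$ and its associated controlled rough path norms; together with the continuity of $f,g$ and Lemma~\ref{lemmapsiintbound} applied to $\int \psi\,d\bmz$, this produces the desired constant $V_0$.

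For (ii), the key calculation is to bound $\|\gamma^{t,a,u}\|_{\ptvartT}^{2(1+p)}$ by a small multiple of $\int_t^T f\,\rd s$ plus a constant. Using \eqref{eq:controldynamics} together with the trivial inequality $\|\cdot\|_{\ptvartT}\leq\|\cdot\|_{\onevartT}$ and H\"older's inequality,
\[
\|\gamma^{t,a,u}\|_{\ptvartT}^{2(1+p)} \leq T^{2p+1}\int_t^T |h(\gamma^{t,a,u}_s,u_s)|^{2(1+p)}\,\rd s.
\]
For any $\kappa>0$, assumption \eqref{eq:hgrowth} provides a threshold $R$ with $|h(a,u)|^{2(1+p)}\leq\kappa\|u\|_U^{2(1+p)\delta}$ whenever $\|u\|_U>R$, while for $\|u\|_U\leq R$ the local boundedness of $h$ gives a uniform bound. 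Assumption \eqref{eq:fcoercivity} in turn supplies, for any $N>0$, a threshold $R'$ such that $N\|u\|_U^{2(1+p)\delta}\leq f(x,a,u)+c_f$ for $\|u\|_U>R'$, where $-c_f$ is a lower bound on $f$. Choosing $\kappa/N$ small enough, and using $f\geq -c_f$ to dominate the contribution from $\|u\|_U\leq\max(R,R')$, one obtains
\[
\|\gamma^{t,a,u}\|_{\ptvartT}^{2(1+p)} \leq \tfrac{1}{4}\int_t^T f(X^{t,x,a,u}_s,\gamma^{t,a,u}_s,u_s)\,\rd s + C_1
\]
for a constant $C_1$ depending only on $h,f,p,T$.

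Feeding this into Lemma~\ref{lemmapsiintboundCf} and using $g\geq -c_g$ yields
\[
J(t,x,a;u)\geq\tfrac{1}{2}\int_t^T f\,\rd s - C - c_g \geq 2\|\gamma^{t,a,u}\|_{\ptvartT}^{2(1+p)} - C_2
\]
for a constant $C_2$ independent of $(t,x,a,u)$. Setting $M:=\big((V_0+C_2+1)/2\big)^{1/(2(1+p))}$, any control $u$ with $\|\gamma^{t,a,u}\|_{\ptvartT}>M$ satisfies $J(t,x,a;u)>V_0\geq v(t,x,a)$ and is therefore suboptimal. The main technical obstacle is the bookkeeping in step (ii): $\kappa$ must be chosen small enough to absorb both the $T^{2p+1}$ factor and the $\tfrac{1}{2}$ prefactor coming from Lemma~\ref{lemmapsiintboundCf}, so that the large-$\|u\|_U$ contributions to $|h|^{2(1+p)}$ are genuinely dominated by the coercive part of $f$, uniformly in $(x,a)$.
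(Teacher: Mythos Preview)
Your proposal is correct and follows essentially the same approach as the paper. The paper's proof is terser: it uses Lemma~\ref{lemmapsiintboundCf} to obtain $J(t,x,a;u)\geq\tfrac{1}{2}\int_t^T f\,\rd s - C$, picks an arbitrary reference control $u^\ast$ (where you specifically use $u\equiv 0$) to produce the uniform upper bound $\sup_{[0,T]\times K}J(\cdot\,;u^\ast)$, and then simply remarks that ``by the proof of Lemma~\ref{lemmapsiintboundCf}'' a bound on $\int_t^T f\,\rd s$ implies a bound on $\|\gamma^{t,a,u}\|_{\frac{p}{2}}$---which is precisely the chain of inequalities you write out explicitly in your step~(ii). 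Your more detailed treatment of step~(i), verifying that $J(\cdot\,;0)$ is indeed uniformly bounded on $[0,T]\times K$ via Proposition~\ref{proproughestimates} and continuity of $f,g$, is a point the paper leaves implicit.
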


\begin{proof}
By Lemma~\ref{lemmapsiintboundCf} and the assumption that $g$ is bounded below, we have that
$$J(t,x,a;u) \geq \frac{1}{2}\int_t^Tf(X^{t,x,a,u}_s,\gamma^{t,a,u}_s,u_s)\,\rd s - C$$
for some possibly new constant $C$. Let $u^\ast \in \cU$ be an arbitrary control. By the above, we may ignore all controls $u$ such that
$$\frac{1}{2}\int_t^Tf(X^{t,x,a,u}_s,\gamma^{t,a,u}_s,u_s)\,\rd s - C > \sup_{(\hat{t},\hat{x},\hat{a}) \in [0,T] \times K}J(\hat{t},\hat{x},\hat{a};u^\ast).$$
This gives an upper bound on $\int_t^Tf(X^{t,x,a,u}_s,\gamma^{t,a,u}_s,u_s)\,\rd s$, which we observe, by the proof of Lemma~\ref{lemmapsiintboundCf}, also implies an upper bound on $\|\gamma^{t,a,u}\|_{\frac{p}{2}}$.
\end{proof}

\subsection{A smooth noise approximation}\label{SecSmoothApprox}

Although a dynamic programming principle of the form in Proposition~\ref{propDPP} holds for the value function $v$ in \eqref{eq:roughvaluefunc}, the appearance of the rough integrals makes it less straightforward to derive a PDE directly from this result. As in Example~\ref{naiveexample}, we will therefore proceed by first approximating $\zeta$ by a smooth function $\eta$. We then define the corresponding approximate control problem, with dynamics
\begin{equation}
\rd X^{t,x,a,u,\eta}_s = b(X^{t,x,a,u,\eta}_s,\gamma^{t,a,u}_s)\,\rd s + \lambda(X^{t,x,a,u,\eta}_s,\gamma^{t,a,u}_s)\,\rd\eta_s, \qquad X^{t,x,a,u,\eta}_t = x,\label{eq:smoothXdynamics}
\end{equation}
where $\gamma^{t,a,u}$ satisfies \eqref{eq:controldynamics}. Naturally equation \eqref{eq:smoothXdynamics} has a unique $C^1$ solution. However, in the following it will be useful to also embed this solution in rough path space. As $\eta$ is smooth, we can simply enhance it with its iterated integrals in the classical Lebesgue--Stieltjes sense,
\begin{equation}\label{eq:smoothiteratedint}
\eta^{(2)}_{s,t} := \int_s^t\eta_{s,r} \otimes \rd\eta_r,
\end{equation}
so that $\bme = (\eta,\eta^{(2)})$ is itself a rough path. Any continuous path with finite $p$-variation would make a valid candidate for the Gubinelli derivative of $X^\eta$ (with respect to $\eta$), but to be consistent with the genuinely rough case above we insist on the choice $(X^\eta)' = \lambda(X^\eta,\gamma)$. We can then consider $(X^\eta,\lambda(X^\eta,\gamma))$ as the solution of \eqref{eq:smoothXdynamics} in the sense of Theorem~\ref{thmsolnofRDE}.

We also define the corresponding approximate value function $v^\eta$ as
\begin{align*}
v^\eta(t,x,a) := \inf_{u \in \cU}\bigg\{\int_t^T&f(X^{t,x,a,u,\eta}_s,\gamma^{t,a,u}_s,u_s)\,\rd s\\
&+ \int_t^T\psi(X^{t,x,a,u,\eta}_s,\gamma^{t,a,u}_s)\,\rd\eta_s + g(X^{t,x,a,u,\eta}_T,\gamma^{t,a,u}_T)\bigg\}.
\end{align*}

Writing $\deta$ for the derivative of $\eta$, under Assumption~\ref{assumptionblambdamufgh}, we can apply Theorem~3.2 in Bardi and Da Lio \cite{BardiDaLio1997}, to obtain that $v^\eta$ is the unique viscosity solution of the HJB equation
\begin{align}
-\frac{\pa v^\eta}{\pa t}(t,x,a) - b(x,a)\cdot\nabla_xv^\eta(t,x,a&) - \inf_{u \in U}\big\{h(a,u)\cdot\nabla_av^\eta(t,x,a) + f(x,a,u)\big\}\nonumber\\
&-\big(\lambda(x,a)\cdot\nabla_xv^\eta(t,x,a) + \psi(x,a)\big)\deta_t = 0,\label{eq:smoothHJB}
\end{align}
with the terminal condition
\begin{equation}\label{eq:terminalsmoothHJB}
v^\eta(T,x,a) = g(x,a).
\end{equation}
Moreover, by Theorem~2.2 in \cite{BardiDaLio1997}, this solution is locally Lipschitz continuous.

\subsection{A rough HJB equation}

Replacing $\eta$ in \eqref{eq:smoothHJB} with $\bmz$, we formally derive the rough PDE given by
\begin{equation}\label{eq:roughHJB}
-\rd v - b\cdot\nabla_xv\,\rd t - \inf_{u \in U}\big\{h\cdot\nabla_av + f\big\}\hspace{0.3pt}\rd t - \big(\lambda\cdot\nabla_xv + \psi\big)\hspace{0.3pt}\rd\bmz = 0,
\end{equation}
with
\begin{equation}\label{eq:terminalroughHJB}
v(T,x,a) = g(x,a).
\end{equation}
We point out that as written equation \eqref{eq:roughHJB} is only formal, and is given a precise meaning in Definition~\ref{defnsolnroughHJB} below.

The following definition exhibits a standard notion of solution for rough PDEs, used in \cite{DiehlFrizGassiat2017}, as well as for instance by Caruana, Friz and Oberhauser \cite{CaruanaFriz2009,CaruanaFrizOberhauser2011,FrizOberhauser2014} (see also Chapter~12 in \cite{FrizHairer2014}).

\begin{definition}\label{defnsolnroughHJB}
For any smooth function $\eta \colon [0,T] \to \R^d$, write $v^\eta$ for the unique viscosity solution of \eqref{eq:smoothHJB} and \eqref{eq:terminalsmoothHJB}. Moreover, write $\bme$ for the rough path obtained by enhancing $\eta$ with its iterated integrals in the Lebesgue--Stieltjes sense, as in \eqref{eq:smoothiteratedint}. We say that a continuous function $v$ solves \eqref{eq:roughHJB} and \eqref{eq:terminalroughHJB} if
$$v^{\eta^n} \longrightarrow\, v \qquad \text{as} \quad\ \ n\, \longrightarrow\, \infty$$
locally uniformly on $[0,T] \times \R^m \times \R^k$, whenever $(\eta^n)_{n \geq 1}$ is a sequence of smooth paths such that $\bme^n \to \bmz$ with respect to the $\frac{1}{p}$-H\"older rough path metric, i.e.~$\varrho_{\opHol}(\bme^n,\bmz) \to 0$ as $n \to \infty$.
\end{definition}

Note that uniqueness of such a solution is built into the definition. Moreover, note that since we assumed that $\bmz$ is a geometric rough path, there certainly exists such a sequence of smooth paths $(\eta^n)_{n \geq 1}$.

\begin{theorem}\label{thmvaluefuncsolvesroughHJB}
Under Assumption~\ref{assumptionblambdamufgh}, the value function $v$ defined in \eqref{eq:roughvaluefunc} solves \eqref{eq:roughHJB} and \eqref{eq:terminalroughHJB} in the sense of Definition~\ref{defnsolnroughHJB}. Moreover, writing $v = v^\zeta$, the map from $\sC^{0,p}_g([0,T];\R^d) \to \R$ given by $\bmz \mapsto v^\zeta(t,x,a)$ is locally uniformly continuous with respect to each of the rough path metrics $\varrho_p$ and $\varrho_{\opHol}$, locally uniformly in $(t,x,a)$.
\end{theorem}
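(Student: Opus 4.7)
The plan is to prove the local Lipschitz estimate
$$|v^\eta(t,x,a) - v^\zeta(t,x,a)| \leq C\varrho_p(\bme,\bmz),$$
valid uniformly over $(t,x,a) \in [0,T]\times K$ for each compact $K \subset \R^m \times \R^k$, and over $\bme,\bmz$ with $\ver{\bme}_{\opHol}, \ver{\bmz}_{\opHol} \leq L$, with a constant depending only on the data, $L$ and $K$. Both assertions of the theorem then follow: local uniform continuity with respect to $\varrho_p$ is immediate, that with respect to $\varrho_{\opHol}$ follows from the simplex inequalities $\|\cdot\|_p \leq T^{1/p}\|\cdot\|_{\opHol}$ and $\|\cdot\|_{p/2} \leq T^{2/p}\|\cdot\|_{\tpHol}$ noted after the definition of $\varrho_p$, and the solution property in the sense of Definition~\ref{defnsolnroughHJB} is recovered by choosing smooth $\eta^n$ with $\varrho_{\opHol}(\bme^n,\bmz)\to 0$.

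A preliminary observation is that when $\eta$ is smooth, the viscosity solution $v^\eta$ of \eqref{eq:smoothHJB}--\eqref{eq:terminalsmoothHJB} coincides with the value function obtained by interpreting \eqref{eq:smoothXdynamics} as an RDE against the canonical lift $\bme$ with Gubinelli derivative $\lambda(X^\eta,\gamma)$, since the compensator $\lambda(X,\gamma)'_s\eta^{(2)}_{s,t}$ is $O(|t-s|^2)$, and likewise $\int\psi(X,\gamma)\,\rd\eta$ agrees with the rough integral against $\bme$. Thus $v^\eta$ and $v^\zeta$ live within a common rough-path framework, to which the stability results of Section~\ref{SecRoughPathPrelim} apply.

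The main step is to combine Corollary~\ref{corollaryrestrctrsoncpt} with Proposition~\ref{proproughstability}. The corollary furnishes, uniformly over drivers with $\ver{\cdot}_{\opHol}\leq L$ and over $(t,x,a)\in[0,T]\times K$, a single $M=M(L,K)$ such that the infimum defining $v^\eta(t,x,a)$ may be restricted to controls $u$ with $\|\gamma^{t,a,u}\|_{p/2}\leq M$; its proof depends on the driver only through the constant in Lemma~\ref{lemmapsiintbound}, which is controlled by $L$. For such controls, applying Proposition~\ref{proproughstability} with matched $\gamma=\vartheta=\gamma^{t,a,u}$ and identical initial data $x=y$ yields
$$\|X^\eta - X^\zeta\|_\infty + \bigg|\int_t^T\psi(X^\eta,\gamma)\,\rd\bme - \int_t^T\psi(X^\zeta,\gamma)\,\rd\bmz\bigg| \leq C\varrho_p(\bme,\bmz),$$
uniformly in the restricted class; the bound on $\|X^\eta-X^\zeta\|_\infty$ follows from the controlled-path decomposition
$$(X^\eta-X^\zeta)_{s,t} = \big((X^\eta)'-(X^\zeta)'\big)_s\zeta_{s,t} + (X^\eta)'_s(\eta-\zeta)_{s,t} + \big(R^{X^\eta}-R^{X^\zeta}\big)_{s,t},$$
together with \eqref{eq:roughstability} and the elementary inequality $\|\cdot\|_p \leq \|\cdot\|_{p/2}$ on path seminorms. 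Combined with the uniform Lipschitz continuity of $f$ and $g$ in $(x,a)$, this gives $|J^\eta(t,x,a;u) - J^\zeta(t,x,a;u)| \leq C\varrho_p(\bme,\bmz)$ uniformly over the restricted controls, and taking infimum over $u$ delivers the Lipschitz estimate on the value functions.

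The main obstacle is the unboundedness of the control set $\cU$: naively the constants in Proposition~\ref{proproughstability} would blow up as $\|u\|_U\to\infty$, preventing any uniform passage to the limit in the infimum. The role of the coercivity assumption \eqref{eq:fcoercivity}, and the resulting Corollary~\ref{corollaryrestrctrsoncpt}, is precisely to reduce the infimum to a uniformly controlled class of paths $\gamma^{t,a,u}$ on which the rough-path stability applies with a single constant depending only on the data, $L$ and $K$.
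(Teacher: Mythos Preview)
Your proposal is correct and follows essentially the same approach as the paper: restrict controls via Corollary~\ref{corollaryrestrctrsoncpt} (noting, as you do, that the bound $M$ depends on the driver only through $L$, so it applies simultaneously to $\bme$ and $\bmz$), then apply the stability estimate of Proposition~\ref{proproughstability} on this restricted class to bound $|J^\eta - J^\zeta|$ uniformly, and pass to the infimum. Your additional remarks---the consistency of the smooth and rough formulations of $v^\eta$, and the explicit controlled-path decomposition yielding the $\|X^\eta - X^\zeta\|_\infty$ bound---are correct elaborations that the paper leaves implicit.
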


\begin{proof}
Let $K$ be a compact subset of $\R^m \times \R^k$ and let $\bme \in \sC^p$ be another rough path such that $\varrho_{\opHol}(\bme,\bmz) \leq 1$. By possibly replacing $L$ by $L + 1$, we may assume that $\ver{\bme}_{\opHol} \leq L$. Let us write $X^\eta = X^{t,x,a,u,\eta}$ (resp.~$X^\zeta = X^{t,x,a,u,\zeta}$) for the solution of the RDE \eqref{eq:roughXdynamics} driven by $\bme$ (resp.~$\bmz$), and write $v^\eta$ (resp.~$v^\zeta$) for the corresponding value function, as defined in \eqref{eq:roughvaluefunc}.

By Corollary~\ref{corollaryrestrctrsoncpt}, there exists an $M > 0$ such that, for $(t,x,a) \in [0,T] \times K$, we may restrict to controls $u \in \cU^M \subseteq \cU$ satisfying $\|\gamma^{t,a,u}\|_{\frac{p}{2}} \leq M$, so that in particular the hypotheses of Proposition~\ref{proproughstability} are satisfied.

In the following we shall use $\lesssim$ to denote inequality up to a multiplicative constant which may depend on $b,\lambda,\psi,f,g,h,p,T,L$ and $M$. It follows from Proposition~\ref{proproughstability} that
\begin{equation*}
\|X^\eta - X^\zeta\|_\infty \lesssim \varrho_p(\bme,\bmz),
\end{equation*}
and
\begin{equation*}
\bigg\|\int_t^\cdot\psi(X^\eta_s,\gamma_s)\,\rd\bme_s - \int_t^\cdot\psi(X^\zeta_s,\gamma_s)\,\rd\bmz_s\bigg\|_\infty \lesssim \varrho_p(\bme,\bmz).
\end{equation*}
By the Lipschitz assumptions on $f$ and $g$, for any $(t,x,a) \in [0,T] \times K$, we have
\begin{align*}
\big|v^\eta&(t,x,a) - v^\zeta(t,x,a)\big|\\
&\leq \sup_{u \in \cU^M}\bigg|\int_t^T\big(f(X^{u,\eta}_s,\gamma^{u}_s,u_s) - f(X^{u,\zeta}_s,\gamma^{u}_s,u_s)\big)\,\rd s\\
&\qquad \qquad + \int_t^T\psi(X^{u,\eta}_s,\gamma^{u}_s)\,\rd\bme_s - \int_t^T\psi(X^{u,\zeta}_s,\gamma^{u}_s)\,\rd\bmz_s + g(X^{u,\eta}_T,\gamma^{u}_T) - g(X^{u,\zeta}_T,\gamma^{u}_T)\bigg|\\
&\lesssim \sup_{u \in \cU^M}\bigg(\int_t^T|X^{u,\eta}_s - X^{u,\zeta}_s|\,\rd s + \varrho_p(\bme,\bmz) + |X^{u,\eta}_T - X^{u,\zeta}_T|\bigg)\\
&\lesssim \varrho_p(\bme,\bmz) \lesssim \varrho_{\opHol}(\bme,\bmz).
\end{align*}
Taking a sequence of smooth paths $(\eta^n)_{n \geq 1}$ such that $\varrho_{\opHol}(\bme^n,\bmz) \to 0$, the required convergence follows by taking $\bme = \bme^n$ in the above. Since the approximate value functions $v^{\eta^n}$ are continuous, continuity of the function $v$ with respect to $(t,x,a)$ also follows from this convergence. The stated continuity of the value function with respect to the driving rough path is also immediate from the above.
\end{proof}

\begin{remark}
One could also introduce another Brownian motion $W$ and consider as controlled dynamics the hybrid It\^o-rough differential equation $\rd X = b(X,\gamma)\,\rd s + \sigma(X,\gamma)\,\rd W + \lambda(X,\gamma)\,\rd \bmz$. Just as in the classical case, the value function is then defined as the infimum (or supremum) over adapted controls of an expected cost function, and the associated HJB equation is then of second order; see Example~\ref{exampleinsider} below.
\end{remark}

\subsection{Examples}\label{SecExamples}

\begin{example}\label{exBMstratroughpath}
When a Brownian motion $W$ is enhanced with its iterated integrals in the sense of Stratonovich integration, i.e.
\begin{equation}
W^{(2)}_{s,t} := \int_s^tW_{s,r} \otimes \circ\hspace{1pt}\rd W_r,
\end{equation}
then, almost surely, $\mathbf{W} = (W,W^{(2)})$ defines a $\frac{1}{p}$-H\"older geometric rough path for any $p \in (2,3)$. The choice $\bmz = \mathbf{W}$ thus leads to the stochastic PDE
\begin{equation*}
-\rd v - b\cdot\nabla_xv\,\rd t - \inf_{u \in U}\big\{h\cdot\nabla_av + f\big\}\hspace{0.3pt}\rd t - \big(\lambda\cdot\nabla_xv + \psi\big)\hspace{-1.5pt}\circ\hspace{-1pt}\rd\mathbf{W} = 0.
\end{equation*}
\end{example}

\begin{example}[Insider trading revisited]\label{exampleinsider}
Let us return to the setting of Example~\ref{naiveexample}, where we recall that an agent is trading a stock with the benefit of some extra information not available to the rest of the market. We denote the agent's initial investment by $a$, and the rate at which they purchase new stock by $u$. The dynamics of the agent's wealth process $X$ and investment $\gamma$ are given by\footnote{When $\sigma \neq 0$ the inclusion of the Brownian motion takes us outside the class of problems considered above, but there is no conceptual change and we expect all of the analysis to follow with appropriate technical adjustments.}
\begin{align*}
\rd X^{t,x,a,u}_s &= \gamma^{t,a,u}_s(\sigma\,\rd W_s + \rd\zeta_s), & X^{t,x,a,u}_t &= x,\\
\rd \gamma^{t,a,u}_s &= u_s\,\rd s, & \gamma^{t,a,u}_t &= a,
\end{align*}
where $W$ is a Brownian motion and $\zeta$ is an arbitrary continuous path. Note that, since $\zeta$ is continuous and $\gamma^{t,a,u}$ is of finite variation, the integral $\int_t^\cdot\gamma^{t,a,u}_s\,\rd\zeta_s$ exists in the Riemann--Stieltjes sense, so there is no need here to lift $\zeta$ into rough path space.

Let us suppose that the agent must pay a transaction cost of $\varepsilon u^2$. The agent's expected terminal wealth is then given by the value function
$$v(t,x,a) = \sup_{u \in \cU}\E\bigg[X^{t,x,a,u}_T - \int_t^T\varepsilon u_s^2\,\rd s\bigg],$$
where $\cU$ is the space of progressively measurable $\R$-valued processes. In this case the HJB equation \eqref{eq:roughHJB} takes the form
\begin{equation}\label{eq:exampleroughHJB}
-\rd v - \frac{1}{2}a^2\sigma^2\frac{\pa^2v}{\pa x^2}\,\rd t - \frac{1}{4\varepsilon}\bigg(\frac{\pa v}{\pa a}\bigg)^{\hspace{-3pt}2}\rd t - a\frac{\pa v}{\pa x}\,\rd\zeta = 0
\end{equation}
with
\begin{equation}\label{eq:exampleroughterminal}
v(T,x,a) = x.
\end{equation}
Approximating $\zeta$ by a smooth function $\eta$, we obtain the classical HJB equation
\begin{equation*}
-\frac{\pa v^\eta}{\pa t} - \frac{1}{2}a^2\sigma^2\frac{\pa^2v^\eta}{\pa x^2} - \frac{1}{4\varepsilon}\bigg(\frac{\pa v^\eta}{\pa a}\bigg)^{\hspace{-3pt}2} - a\deta\frac{\pa v^\eta}{\pa x} = 0.
\end{equation*}
The solution of this equation along with the terminal condition \eqref{eq:exampleroughterminal} is given by
$$v^\eta(t,x,a) = x + (\eta_T - \eta_t)a + \frac{1}{4\varepsilon}\int_t^T(\eta_T - \eta_s)^2\,\rd s.$$
Recalling Definition~\ref{defnsolnroughHJB}, we obtain the solution of \eqref{eq:exampleroughHJB} and \eqref{eq:exampleroughterminal} as
$$v(t,x,a) = x + (\zeta_T - \zeta_t)a + \frac{1}{4\varepsilon}\int_t^T(\zeta_T - \zeta_s)^2\,\rd s.$$
Note that this quantity remains finite even when $\zeta$ is of infinite variation. Thus, an agent, even with perfect knowledge of the future stock price, subject to sufficient transaction costs, can only make a finite profit.
\end{example}

\section{Robust filtering}\label{SecApplicFilter}

\subsection{The Kalman--Bucy filter}

In this section we turn our attention to the problem of stochastic filtering under model uncertainty. Let us take an underlying filtered space $(\Omega,\cF,(\cF_t)_{t \geq 0})$. We suppose that an $\R^m$-valued signal process $S$ and an $\R^d$-valued observation process $Y$ satisfy the following pair of linear equations
\begin{align*}
\rd S_t &= \alpha_tS_t\,\rd t + \sigma_t\,\rd B^1_t,\\
\rd Y_t &= c_tS_t\,\rd t + \rd B^2_t,
\end{align*}
with the initial conditions $Y_0 = 0$ and $S_0 \sim N(\mu_0,\Sigma_0)$ for some $\mu_0 \in \R^m$ and $\Sigma_0 \in \Sm$, where $\Sm$ denotes the set of symmetric, positive definite $m \times m$-matrices. Here $B^1$ (resp.~$B^2$) is a standard $\R^l$(resp.~$\R^d$)-valued Brownian motion, and $\alpha \colon [0,T] \to \R^{m \times m}$, $\sigma \colon [0,T] \to \R^{m \times l}$ and $c \colon [0,T] \to \R^{d \times m}$ are parameters. Here we include the case when the signal noise and observation noise are correlated; we suppose that their quadratic covariation is given by
$$\rd\langle B^1,B^2\rangle_t = \rho_t\,\rd t,$$
for some correlation matrix $\rho \colon [0,T] \to \R^{l \times d}$. In the scalar case, the correlation should naturally satisfy $\rho^2 \leq 1$. The analogous assumption here is that the matrix $I - \rho\rho^\top$ be positive semi-definite, where $I$ denotes the $l \times l$ identity matrix.

We shall denote by $(\cY_t)_{t \geq 0}$ the (completed) natural filtration generated by $Y$. In short, the filtering problem is concerned with, at each time $t$, determining the best estimate for $S_t$ given $\cY_t$, that is, finding the best estimate for the current value of $S$, given our past observations of $Y$. The mathematical theory underpinning the filtering of stochastic systems is by now well understood; a particularly good exposition is given in Bain and Crisan \cite{BainCrisan2009}. As observed by Kalman and Bucy \cite{Kalman1960,KalmanBucy1961}, and subsequently studied by numerous authors in various contexts, in this setting where, crucially, the underlying dynamics are linear, the conditional distribution of $S_t$ given $\cY_t$ is Gaussian. Moreover, the conditional mean $q_t = \E[S_t\,|\,\cY_t]$ of this distribution satisfies the SDE
\begin{equation}\label{eq:meandynamics}
\rd q_t = \alpha_tq_t\,\rd t + (R_tc^\top_t\hspace{-1pt} + \sigma_t\rho_t)(\rd Y_t - c_tq_t\,\rd t),
\end{equation}
and the conditional variance $R_t = \E[(S_t - q_t)(S_t - q_t)^\top|\,\cY_t]$ satisfies the deterministic matrix Riccati equation
\begin{equation}\label{eq:variancedynamics}
\frac{\rd R_t}{\rd t} = \sigma_t\sigma^\top_t + \alpha_tR_t + R_t\alpha^\top_t - (R_tc^\top_t\hspace{-1pt} + \sigma_t\rho_t)(c_tR_t + \rho_t^\top\hspace{-2pt}\sigma^\top_t).
\end{equation}

The filtering equations above allow one to fully characterise the conditional distribution of the signal. However, this procedure assumes that we know a priori the exact values of the parameters $\alpha,\sigma,c$ and $\rho$. In practice these parameters must be estimated from data, and in adopting these estimates one concedes an additional source of statistical uncertainty. In the present work we are interested in incorporating this uncertainty directly into the construction of the filter. That is, we are interested in stochastic filtering for linear systems which is robust with respect to model uncertainty.

\subsection{Robust filtering via nonlinear expectations}\label{SecRobustFiltViaNonlinExp}

Robust filtering has been studied in various papers, predominantly in the engineering literature. A typical approach is to construct an optimization procedure based on a minimax estimator for the hidden state, whereby one attempts to minimize a maximum expected loss over the space of possible models. See for instance the work of Borisov \cite{Borisov2008,Borisov2011}, Miller and Pankov \cite{MillerPankov2005}, Siemenikhin, Lebedev and Platonov \cite{Siemenikhin2016,SiemenikhinLebedevPlatonov2005} or Verd\'u and Poor \cite{VerduPoor1984}. By design, such estimators take into account a generally large set of models, even though many of them should be considered to be very implausible, thus often sacrificing filter performance under the most statistically reasonable model. Another approach is that of $H_\infty$, as well as hybrid $H_2/H_\infty$ filtering, which examines the energy gain from the noise input to the filtering error and attempts to minimize this energy transfer subject to suitable constraints; see Aliyu and Boukas \cite{AliyuBoukas2009}, Chen and Zhou \cite{ChenZhou2002}, Khargonekar, Rotea and Baeyens \cite{KhargonekarRoteaBaeyens1996}, Xie, de Souza and Fu \cite{XiedeSouzaFu1991} or Yang and Ye \cite{YangYe2009}.

A new approach to filtering in the presence of uncertainty was introduced in \cite{Cohenuncertfilterdiscrete}, which utilises a nonlinear expectation described in terms of a penalty function, which describes how our uncertainty evolves through time. This penalty can be calculated recursively, and can be used to construct robust estimates for any number of nonlinear functionals of the signal process, as well as robust interval estimates analogous to classical confidence/credible intervals.

The first application of this approach in a continuous time setting was presented in \cite{AllanCohen2019}, which studies a similar setting the one described above. In that paper however, the parameter $c$ was assumed to be known, and the signal and observation noises were assumed to be uncorrelated. In the current work we shall relax these assumptions, and also allow a more general penalty, which in particular takes into account the statistical likelihood for different parameter choices. As we will see, this approach will lead to the derivation of a pathwise stochastic control problem, and thus require the central ideas of the previous sections in order to proceed.

We consider \emph{convex expectations}, that is maps $\cE(\,\cdot\,|\,\cY_t) \colon L^\infty(\cF) \to L^\infty(\cY_t)$ satisfying the properties of monotonicity, translation equivariance, normalization and convexity, which additionally satisfy the Fatou property. Equivalently, and more explicitly, we consider maps which admit a representation of the form
\begin{equation}\label{eq:nonlinearexprep}
\cE(\xi\,|\,\cY_t) = \esssup_{\Q \in \cQ_t}\big\{\E^\Q[\xi\,|\,\cY_t] - \beta(\Q\,|\,\cY_t)\big\},
\end{equation}
where $\cQ_t$ is a collection of equivalent probability measures, and $\beta(\hspace{1pt}\cdot\,|\,\cY_t)$ is a nonnegative $\cY_t$-measurable penalty function. See e.g.~F\"ollmer and Schied \cite{FollmerSchied2002,FollmerSchied2004} for a proper exposition of the theory of nonlinear expectations.

As can be inferred from \eqref{eq:nonlinearexprep}, in the context of model uncertainty, i.e.~uncertainty in the underlying probability measure, nonlinear expectations provide an evaluation of random variables which takes into account every admissible measure. In other words, they consider every plausible view of the world, and envisage the worst case scenario. However, in contrast with sublinear expectations, the inclusion of the penalty term $\beta(\hspace{1pt}\cdot\,|\,\cY_t)$ means that we can penalise different measures according to how unreasonable we consider them to be, thus restricting our attention to only those measures which we consider to be realistic. Convex expectations are in this sense less pessimistic than their sublinear counterparts.

In our setting, the class of admissible measures simply corresponds to the family of possible parameters $\alpha,\sigma,c,\rho,\mu_0$ and $\Sigma_0$ of the dynamics of the signal and observation processes. For notational brevity, we shall denote\footnote{One is not obliged to consider all of these parameters as being uncertain, but we will focus on this, the most general case.}
$$\gamma := (\alpha,\sigma,c,\rho),$$
and write
$$\Gamma := \R^{m \times m} \times \R^{m \times l} \times \R^{d \times m} \times \Upsilon$$
for the space in which $\gamma$ takes values, where $\Upsilon$ denotes the space of valid correlation matrices:
\begin{equation}\label{eq:defncorrel}
\Upsilon := \big\{\rho \in \R^{l \times d} : I - \rho\rho^\top \text{\ is positive definite}\big\} = \big\{\rho \in \R^{l \times d} : \lmax(\rho\rho^\top) < 1\big\},
\end{equation}
where $\lmax(\hspace{0.5pt}\cdot\hspace{0.5pt})$ denotes the largest eigenvalue. We write $\P^{\gamma,\mu_0,\Sigma_0}$ for the measure associated with the parameters $\gamma,\mu_0$ and $\Sigma_0$, and write $\E^{\gamma,\mu_0,\Sigma_0}$ for the corresponding expectation.

For a given uncertainty aversion parameter $k_1 > 0$ and exponent $k_2 \geq 1$, we define, for any real-valued bounded measurable function $\vp$, the convex expectation with the representation
\begin{equation}\label{eq:defnnonlinearexp}
\cE(\vp(S_t)\,|\,\cY_t) = \esssup_{\gamma,\mu_0,\Sigma_0}\bigg\{\E^{\gamma,\mu_0,\Sigma_0}[\vp(S_t)\,|\,\cY_t] - \bigg(\frac{1}{k_1}\beta(\gamma,\mu_0,\Sigma_0\,|\,\cY_t)\bigg)^{\hspace{-2pt}k_2}\bigg\}.
\end{equation}
Here the essential supremum is taken over all possible parameters $(\mu_0,\Sigma_0) \in \R^m \times \Sm$ for the initial distribution of the signal, and over all choices of parameters $\gamma$ governing the dynamics of $S$ and $Y$.

In view of the insights of the previous section, we anticipate the eventual need to restrict to a sufficiently regular space of parameters $\gamma$ (which we will later refer to as controls). We consequently make the following assumption.

\begin{assumption}
We shall take the space of possible parameters $\gamma$ to be the family of all absolutely continuous functions $\gamma \colon [0,T] \to \Gamma$ with bounded derivative.
\end{assumption}

The penalty function $\beta$ represents our opinion of how unreasonable different values of the parameters are. We shall discuss this term further in the next subsection. The uncertainty aversion parameters $k_1,k_2$ are included for generality, but will play no significant role in our analysis.

The nonlinear expectation defined above can be used to construct a `robust' point estimate of $\vp(S_t)$, as
$$\argmin_{\xi \in \R}\, \cE\big((\vp(S_t) - \xi)^2\,\big|\,\cY_t\big).$$
Moreover, the nonlinear expectation $\cE(\vp(S_t)\,|\,\cY_t)$ will typically overestimate the true value of $\vp(S_t)$, so one may therefore think of $\cE(\vp(S_t)\,|\,\cY_t)$ as an `upper' expectation. Defining the corresponding `lower' expectation by $-\cE(-\vp(S_t)\,|\,\cY_t)$, one can then construct a robust interval estimate for $\vp(S_t)$ via
$$\big[\hspace{-2pt}-\hspace{-1pt}\cE(-\vp(S_t)\,|\,\cY_t),\,\cE(\vp(S_t)\,|\,\cY_t)\big].$$

\subsection{The penalty function}

In \cite{AllanCohen2019} the penalty $\beta$ was assumed to be fixed a priori, i.e.~it only took our prior beliefs into account. Although the parameters of the underlying system are unknown, as we make new observations we may wish to use these observations to update our opinion of how reasonable different parameter choices are. We shall therefore suppose that this penalty takes the form of a negative log-posterior density. That is, we take
\begin{equation}\label{eq:betaneglogpiL}
\beta_t(\gamma,\mu_0,\Sigma_0\,|\,\cY_t) = -\log\hspace{-1pt}\Big(\pi_t(\gamma,\mu_0,\Sigma_0)L_t(\gamma,\mu_0,\Sigma_0\,|\,\cY_t)\Big),
\end{equation}
where $\pi$ and $L(\hspace{1pt}\cdot\,|\,\cY_t)$ denote the prior and likelihood respectively.

The penalty function in \eqref{eq:betaneglogpiL} is built from the log-likelihood function, a familiar object from classical statistics. Penalties based on log-likelihoods form the basis of the data-driven robust (DR) expectation of \cite{Cohen2017}, which allows the level of penalisation of different parameter choices to be recursively updated through time as we collect new observations. We refer to \cite{Cohen2017} for further discussion. Here we add to this an additional penalty based on our prior beliefs, which may be calibrated accordingly.

We shall assume that the prior takes the form
\begin{equation}\label{eq:priorpi}
-\log\pi_t(\gamma,\mu_0,\Sigma_0) = \int_0^t\frf(q_s,R_s,\gamma_s)\,\rd s + g(\mu_0,\Sigma_0),
\end{equation}
where the functions $\frf$ and $g$ may be calibrated to represent our prior beliefs about the plausibility of different parameter choices. Here $q$ and $R$ are the conditional mean and variance corresponding to the parameters $\gamma,\mu_0$ and $\Sigma_0$, given by the solutions of \eqref{eq:meandynamics} and \eqref{eq:variancedynamics}.

Note that the measures $\P^{\gamma,\mu_0,\Sigma_0}$ for different choices of $\gamma,\mu_0$ and $\Sigma_0$ are all equivalent on $\cY_t$. A natural choice for $L_t(\hspace{1pt}\cdot\,|\,\cY_t)$ is thus the Radon--Nikodym derivative
$$L_t(\gamma,\mu_0,\Sigma_0\,|\,\cY_t) = \bigg(\frac{\rd\P^{\gamma,\mu_0,\Sigma_0}}{\rd\P^{\gamma^\ast,\mu_0^\ast,\Sigma_0^\ast}}\bigg)_{\hspace{-3pt}\cY_t}$$
which is precisely the likelihood ratio of the (arbitrary) parameter choice $\gamma,\mu_0,\Sigma_0$, with respect to a (fixed) choice of reference parameters $\gamma^\ast,\mu_0^\ast,\Sigma_0^\ast$. We will now derive an explicit expression for this likelihood.

Recall (from e.g.~Bain and Crisan \cite[Chapter~2]{BainCrisan2009}) that for a given choice of parameters $\gamma,\mu_0,\Sigma_0$, the \emph{innovation} process $V$, given in this setting by
$$\rd V_s = \rd Y_s - c_sq_s\,\rd s,$$
is a $\cY_t$-adapted Brownian motion under $\P^{\gamma,\mu_0,\Sigma_0}$. Writing $q^\ast$ (resp.~$V^\ast$) for the conditional mean (resp.~innovation process) under the reference measure $\P^{\gamma^\ast,\mu_0^\ast,\Sigma_0^\ast}$,
we have
$$\rd V_s = \rd V^\ast_s - (c_sq_s - c^\ast_sq^\ast_s)\,\rd s.$$
Thus, by Girsanov's theorem (see e.g.~\cite[Chapter~15]{CohenElliott2015}), as $V$ and $V^\ast$ have the predictable representation property under their respective measures, we can represent the likelihood as a stochastic exponential, namely
$$L_t(\gamma,\mu_0,\Sigma_0\,|\,\cY_t) = \exp\bigg(\int_0^t(c_sq_s - c^\ast_sq^\ast_s)\cdot\rd V^\ast_s - \frac{1}{2}\int_0^t\big|c_sq_s - c^\ast_sq^\ast_s\big|^2\,\rd s\bigg).$$
Substituting $\rd V^\ast_s = \rd Y_s - c^\ast_sq^\ast_s\,\rd s$, a short calculation yields
$$-\log L_t(\gamma,\mu_0,\Sigma_0\,|\,\cY_t) = -\int_0^t(c_sq_s - c^\ast_sq^\ast_s)\cdot\rd Y_s + \frac{1}{2}\int_0^t\Big(|c_sq_s|^2 - |c^\ast_sq^\ast_s|^2\Big)\,\rd s.$$

Since the reference parameters are taken to be fixed, they simply amount to an additive constant in the above expression. That is,
\begin{equation}\label{eq:negloglikeliIto}
-\log L_t(\gamma,\mu_0,\Sigma_0\,|\,\cY_t) = -\int_0^tc_sq_s\cdot\rd Y_s + \frac{1}{2}\int_0^t|c_sq_s|^2\,\rd s + \text{const.}
\end{equation}
For simplicity we will henceforth omit this constant from our analysis, conceding that our penalty function is correct up to an additive constant. This constant may be reintroduced upon numerical computation of the nonlinear expectation, chosen to ensure that the penalty function always takes the value zero at its minimum.

It will be useful later to interpret the stochastic integral in \eqref{eq:negloglikeliIto} in the sense of Stratonovich, rather than that of It\^o. We therefore make the transformation
$$-\int_0^tc_sq_s\cdot\rd Y_s = -\int_0^tc_sq_s\circ\rd Y_s + \frac{1}{2}\big\langle cq,Y\big\rangle_t.$$
Recalling \eqref{eq:meandynamics}, and using the fact that $c$ is absolutely continuous and in particular of bounded variation, after some calculation we deduce that the quadratic covariation term is given by
$$\big\langle cq,Y\big\rangle_t = \int_0^t\tr\big(c_s(R_sc_s^\top + \sigma_s\rho_s)\big)\,\rd s,$$
where $\tr(\hspace{1pt}\cdot\hspace{1pt})$ denotes the trace. Note that $c_sR_sc_s^\top$ is positive semi-definite and therefore has nonnegative trace. Substituting back into \eqref{eq:negloglikeliIto}, we obtain
\begin{equation}\label{eq:negloglikeliStrat}
-\log L_t(\gamma,\mu_0,\Sigma_0\,|\,\cY_t) = -\int_0^tc_sq_s\circ\,\rd Y_s + \frac{1}{2}\int_0^t\Big(|c_sq_s|^2 + \tr\big(c_s(R_sc_s^\top + \sigma_s\rho_s)\big)\Big)\rd s.
\end{equation}
For notational consistency with Section~\ref{SecPathwiseControl}, we introduce the functions $f$ and $\psi$, given by
$$f(q,R,\gamma) := \frf(q,R,\gamma) + \frac{1}{2}\Big(|cq|^2 + \tr\big(c(Rc^\top + \sigma\rho)\big)\Big) \qquad \text{and} \qquad \psi(q,\gamma) := -cq,$$
where we recall $\gamma = (\alpha,\sigma,c,\rho)$. Combining \eqref{eq:betaneglogpiL}, \eqref{eq:priorpi} and \eqref{eq:negloglikeliStrat}, and substituting into \eqref{eq:defnnonlinearexp}, we then obtain the following representation.
\begin{align}
\cE(\vp(S_t)\,|\,\cY_t) = \esssup_{\gamma,\mu_0,\Sigma_0}\bigg\{&\E^{\gamma,\mu_0,\Sigma_0}[\vp(S_t)\,|\,\cY_t]\label{eq:nonlinexpStrat}\\
&\hspace{-21pt}- \bigg(\frac{1}{k_1}\bigg(\int_0^tf(q_s,R_s,\gamma_s)\,\rd s + \int_0^t\psi(q_s,\gamma_s)\circ\rd Y_s + g(\mu_0,\Sigma_0)\bigg)\bigg)^{\hspace{-2pt}k_2}\bigg\}.\nonumber
\end{align}

\subsection{Fixing an observation path}

Since the parameters $\alpha,\sigma,c$ and $\rho$ are assumed to be absolutely continuous, $R$ is then the $C^1$ solution of \eqref{eq:variancedynamics}, and it follows from integration by parts (see e.g.~Theorem~1.2.3 in \cite{Stroock2011}) that the It\^o integral against $Y$ in \eqref{eq:meandynamics} can also be interpreted pathwise as a Riemann--Stieltjes integral. Moreover, these two notions of integral coincide almost surely. This can be seen by noting that the corresponding Riemann sums converge almost surely to the Riemann--Stieltjes integral, but also in $L^2$ to the It\^o integral, so these two notions of integral must agree by the uniqueness of limits in probability.

In filtering we make inference based on observations of the process $Y$. Thus, it is natural to restrict our attention to a particular path of $Y$, which we denote by $\zeta$. That is, we define $\zeta \colon [0,T] \to \R^d$ by
\begin{equation*}
\zeta_s := Y_s(\omega) \qquad \text{for} \quad\ s \in [0,T],
\end{equation*}
for some fixed $\omega \in \Omega$. By the previous paragraph, we can then consider the filter dynamics \eqref{eq:meandynamics}--\eqref{eq:variancedynamics} with $Y$ replaced by $\zeta$, namely
\begin{gather}
\rd q_s = \alpha_sq_s\,\rd s + (R_sc^\top_s\hspace{-1pt} + \sigma_s\rho_s)(\rd\zeta_s - c_sq_s\,\rd s),\label{eq:qdynamics}\\
\frac{\rd R_s}{\rd s} = \sigma_s\sigma^\top_s + \alpha_sR_s + R_s\alpha^\top_s - (R_sc^\top_s\hspace{-1pt} + \sigma_s\rho_s)(c_sR_s + \rho_s^\top\hspace{-2pt}\sigma^\top_s),\label{eq:Rdynamics}
\end{gather}

\begin{remark}
Strictly speaking, we a priori only have that the solution $(q,R)$ of \eqref{eq:meandynamics}--\eqref{eq:variancedynamics} exists almost surely for each choice of parameters $\alpha,\sigma,c,\rho,\mu_0,\Sigma_0$. Here we actually wish to consider this solution for \emph{every} choice of parameters, for almost every \emph{fixed} $\omega \in \Omega$. This can be justified by first considering a countable dense collection of parameters, and then appealing to the stability of solutions to Lipschitz SDEs (see e.g.~Chapter~16 in \cite{CohenElliott2015}). Alternatively, having fixed an (arbitrary continuous) path $\zeta$, one can establish existence and uniqueness of solutions of $\eqref{eq:qdynamics}$ directly for any choice of parameters by a classical Picard iterative argument.
\end{remark}

Recall that the representation in \eqref{eq:nonlinexpStrat} for the nonlinear expectation involves the stochastic integral of $\psi(q,\gamma)$ against $Y$. Unlike the stochastic integral in \eqref{eq:meandynamics}, since the paths of $q$ and $Y$ both have Brownian-type regularity, in general this integral does not exist in the pathwise Riemann--Stieltjes sense. As in the previous section, we instead aim to interpret it as a rough integral. Similarly to the setting of Crisan et al.~\cite{CrisanDiehlFrizOberhauser2013}, this requires us to lift the observation process $Y$ into rough path space.

In the previous section we were able to solve optimal control problems where the driving noise was a geometric rough path. However, since It\^o integration does not satisfy first order calculus---that is, it does not satisfy the classical integration by parts/chain rule---when enhancements are defined using iterated It\^o integrals the resulting rough paths are in general not geometric. It was for this reason that we insisted on transforming the It\^o integral in \eqref{eq:negloglikeliIto} into the Stratonovich integral in \eqref{eq:negloglikeliStrat}. Similarly to Example~\ref{exBMstratroughpath}, by setting
$$Y^{(2)}_{s,t} = \int_s^tY_{s,r}\otimes\circ\,\rd Y_r,$$
we have that, almost surely, $\bY = (Y,Y^{(2)})$ defines a $\frac{1}{p}$-H\"older geometric rough path for any $p \in (2,3)$. Recalling that we defined $\zeta = Y(\omega)$ for a given $\omega \in \Omega$, we can now consider $\zeta$ as a rough path by defining its lift as
$$\bmz := \bY(\omega) \in \sC^{0,p}_g$$
for the same $\omega$.

It remains to establish $\psi(q,\gamma)$ as being controlled (in the sense of Gubinelli) by $\zeta$. The Gubinelli derivative of $q$ with respect to $\zeta$ can be inferred by simply inspecting \eqref{eq:qdynamics}. Indeed, recalling the notation $\zeta_{s,t} := \zeta_t - \zeta_s$, we have that
$$q_{s,t} = \int_s^t(R_rc_r^\top\hspace{-1pt} + \sigma_r\rho_r)\,\rd\zeta_r  + \text{O}\big(|t-s|\big) = (R_sc_s^\top\hspace{-1pt} + \sigma_s\rho_s)\zeta_{s,t} + \text{O}\big(|t-s|\big).$$
Since $c$ is of bounded variation, it is trivially controlled by $\zeta$ with derivative zero, and we conclude (from e.g.~Corollary~7.4 in \cite{FrizHairer2014}) that $\psi(q,\gamma) = -cq$ is indeed controlled by $\zeta$ with Gubinelli derivative $\psi(q,\gamma)' = -c(Rc^\top + \sigma\rho)$. Thus, almost surely,
$$\int_0^\cdot\psi(q_s,\gamma_s)\,\rd\bmz_s$$
exists as a rough integral and, moreover, coincides with the Stratonovich integral in \eqref{eq:nonlinexpStrat}.

\subsection{Reformulation as an optimal control problem}\label{SecFiltReformulation}

Writing $\gamma = (\alpha,\sigma,c,\rho)$ as usual, consider the functional $\kappa_t \colon \R^m \times \Sm \to \R$ defined by
\begin{equation}\label{eq:defnkappat}
\kappa_t(\mu,\Sigma) := \inf\bigg\{\int_0^tf(q_s,R_s,\gamma_s)\,\rd s + \int_0^t\psi(q_s,\gamma_s)\,\rd\bmz_s + g(q_0,R_0)\,\bigg|\, \genfrac{}{}{0pt}{}{\gamma,q_0,R_0 \hspace{8pt} \text{such that}}{(q_t,R_t) = (\mu,\Sigma)}\bigg\},
\end{equation}
where $q$ and $R$ satisfy \eqref{eq:qdynamics}--\eqref{eq:Rdynamics} with the \emph{terminal} condition $(q_t,R_t) = (\mu,\Sigma)$. The function $\kappa_t$ is related to the nonlinear expectation \eqref{eq:nonlinexpStrat} by the following lemma.

\begin{lemma}\label{lemmanonlinexpkappa}
Denote by $\Phi(\hspace{1pt}\cdot\,;\mu,\Sigma)$ the distribution function of a $N(\mu,\Sigma)$ distribution. For any rough path $\bmz = (\zeta,\zeta^{(2)}) = \bY(\omega) \in \sC^{0,p}_g$ as defined above, and any bounded measurable function $\vp$, we have the equality
\begin{equation}\label{eq:nonlinexpkappa}
\cE(\vp(S_t)\,|\,\cY_t) = \sup_{(\mu,\Sigma) \in \R^m \times \Sm}\bigg\{\int_{\R^m}\vp(x)\,\rd\Phi(x;\mu,\Sigma) - \bigg(\frac{1}{k_1}\kappa_t(\mu,\Sigma)\bigg)^{\hspace{-3pt}k_2}\bigg\},
\end{equation}
where the expectation on the left-hand side is evaluated on the realisation $Y = \zeta$.
\end{lemma}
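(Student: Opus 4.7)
The strategy is to fix a realisation $\omega \in \Omega$ with $\zeta = Y(\omega)$ and $\bmz = \bY(\omega)$, and to split the essential supremum in the representation \eqref{eq:nonlinexpStrat} according to the terminal filter state $(q_t, R_t)$. The first ingredient is the classical Kalman--Bucy theorem, which asserts that under $\P^{\gamma,\mu_0,\Sigma_0}$ the conditional law of $S_t$ given $\cY_t$ is $N(q_t, R_t)$, so that
$$\E^{\gamma,\mu_0,\Sigma_0}[\vp(S_t) \mid \cY_t] = \int_{\R^m} \vp(x) \, \rd\Phi(x; q_t, R_t).$$
This quantity depends on $(\gamma, \mu_0, \Sigma_0)$ \emph{only} through $(q_t, R_t)$, which is the key structural fact enabling the reparametrization.

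The second ingredient is the identification, established in the preceding subsection, of the Stratonovich integral $\int_0^t \psi(q_s, \gamma_s) \circ \rd Y_s$ with the rough integral $\int_0^t \psi(q_s, \gamma_s) \, \rd\bmz_s$, valid almost surely. Once this is in place, the penalty appearing inside the supremum in \eqref{eq:nonlinexpStrat} matches term-for-term the integrand appearing in the definition \eqref{eq:defnkappat} of $\kappa_t$ (identifying $q_0 = \mu_0$ and $R_0 = \Sigma_0$). I would then split the supremum as
$$\esssup_{(\gamma,\mu_0,\Sigma_0)} = \sup_{(\mu,\Sigma) \in \R^m \times \Sm}\, \sup_{(\gamma,\mu_0,\Sigma_0)\,:\,(q_t,R_t)=(\mu,\Sigma)},$$
noting that the conditional expectation term is constant along each inner fibre, while the inner supremum over the penalty term reduces to $-(k_1^{-1}\kappa_t(\mu,\Sigma))^{k_2}$ by monotonicity of $x \mapsto x^{k_2}$ on $[0, \infty)$ (since $k_2 \geq 1$) together with the nonnegativity of the penalty.

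For this decomposition to cover every $(\mu,\Sigma) \in \R^m \times \Sm$, I need each such pair to be attainable as a terminal filter state, which is immediate by taking $\mu_0 = \mu$, $\Sigma_0 = \Sigma$ and the constant parameter path $\gamma \equiv 0$, keeping the filter constant on $[0,t]$. The main technical obstacle I anticipate is the reconciliation of essential supremum with pointwise supremum: the left-hand side is defined as an essential supremum over an uncountable family of $(\gamma, \mu_0, \Sigma_0)$, whereas the right-hand side of \eqref{eq:nonlinexpkappa} is a genuine pointwise supremum evaluated at the specific $\omega$ at which we fixed $\zeta$. Since our admissible parameters are deterministic and both the map $(\gamma, \mu_0, \Sigma_0) \mapsto (q_t, R_t)$ and the penalty depend continuously on the parameters (via the Lipschitz property of the Kalman--Bucy system together with the rough path stability estimates of Proposition \ref{proproughstability}), the essential supremum admits a version that agrees with the pointwise supremum for almost every $\omega$, which suffices for the identity as stated.
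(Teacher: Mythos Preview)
Your proposal is correct and follows the natural approach. The paper does not give a self-contained proof here, deferring instead to Proposition~2.1 of \cite{AllanCohen2019}, where exactly this decomposition---fibering the supremum over $(\gamma,\mu_0,\Sigma_0)$ by the terminal filter state $(q_t,R_t)$ and invoking the Kalman--Bucy result that the conditional law depends on the parameters only through this pair---is carried out; your handling of the essential-versus-pointwise supremum and of surjectivity via $\gamma\equiv 0$ is slightly more explicit than what is typically written down, but the strategy is the same.
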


The proof of Lemma~\ref{lemmanonlinexpkappa} is the same as that of Proposition~2.1 in \cite{AllanCohen2019}.

The expression for $\kappa$ in \eqref{eq:defnkappat} looks very much like that of the value function of an optimal control problem with state trajectories governed by \eqref{eq:qdynamics}--\eqref{eq:Rdynamics}. To make this exact we should write $\kappa$ as an infimum over the `control' $\gamma$ alone. This is easy, but one should note that, for certain choices of control $\gamma$ and terminal condition $(\mu,\Sigma) \in \R^m \times \Sm$, there will not actually exist a corresponding initial value $(q_0,R_0) \in \R^m \times \Sm$. This can happen for one of two reasons. First, due to the term $\sigma_s\sigma^\top_s$, the solution to \eqref{eq:Rdynamics} may no longer be positive semi-definite, so that $R_0$ does not correspond to a covariance matrix. Second, the solution to \eqref{eq:Rdynamics} may `blow up' in finite time, due to the quadratic term (in $R$) in the final term on the right-hand side of \eqref{eq:Rdynamics}. An example of such behaviour is exhibited in \cite[Section~3]{AllanCohen2019}.

Heuristically, the Kalman--Bucy filter is well behaved when run forwards in time from an initial condition, but here we instead fix a terminal condition and run the filtering equations backwards in time, which introduces the abnormalities described above. To prevent this unphysical behaviour we simply prescribe the value $g(\mu_0,\Sigma_0) = \infty$ for any initial value $(\mu_0,\Sigma_0) \notin \R^m \times \Sm$ and, although we don't actually obtain a physical initial value for solutions which `blow up' in a finite time, we assign an infinite `initial' cost to all such trajectories.

We can now write
\begin{equation}\label{eq:kappatinfovergamma}
\kappa_t(\mu,\Sigma) = \inf_{\gamma}\bigg\{\int_0^tf(q^{t,\mu,\Sigma,\gamma}_s,R^{t,\Sigma,\gamma}_s,\gamma_s)\,\rd s + \int_0^t\psi(q^{t,\mu,\Sigma,\gamma}_s,\gamma_s)\,\rd\bmz_s + g\big(q^{t,\mu,\Sigma,\gamma}_0,R^{t,\Sigma,\gamma}_0\big)\bigg\}
\end{equation}
where $q^{t,\mu,\Sigma,\gamma},R^{t,\Sigma,\gamma}$ satisfy \eqref{eq:qdynamics}--\eqref{eq:Rdynamics} with the terminal condition
$$\big(q^{t,\mu,\Sigma,\gamma}_t,R^{t,\Sigma,\gamma}_t\big) = (\mu,\Sigma),$$
noting that trajectories with the undesired behaviour described above will never be considered when taking the infimum in \eqref{eq:kappatinfovergamma}.

We have derived an optimal control problem, with the controlled dynamics \eqref{eq:qdynamics}--\eqref{eq:Rdynamics}, and the value function defined in \eqref{eq:kappatinfovergamma}. Moreover, the appearance of the `Brownian-like' path $\zeta$ in \eqref{eq:qdynamics}, and indeed the rough path $\bmz$ in \eqref{eq:kappatinfovergamma}, puts us back into the setting of pathwise stochastic control. In the case where the parameter $c$ is known, the signal and observation noises are uncorrelated (so that $\rho \equiv 0$), and if we omit the likelihood term in the penalty of our nonlinear expectation, then we are not directly controlling the coefficient of the rough term $\zeta$. This was the case in the setting of \cite{AllanCohen2019}, where a change of variables was then used to completely isolate the observation path from the controlled terms.

In the current setting however we cannot escape the need to control the coefficient of $\zeta$. As described in Section~\ref{SecPathwiseControl}, if the variation of the controls $\gamma$ is not sufficiently penalised then the control problem degenerates. The physical interpretation here is the following: even if we suppose that the parameters $\alpha,\sigma,c$ and $\rho$ are able to fluctuate at the same rate as the observation path $\zeta$, it is not reasonable to suppose that we should be able to calibrate these parameters over time scales that are so small that our observations are dominated by measurement noise.

Accordingly, we employ the strategy introduced in the previous section of introducing a regularising cost, and rewriting the problem in terms of an abstract control process $u$. We consider the dynamics \eqref{eq:qdynamics}--\eqref{eq:Rdynamics} along with
\begin{equation*}
\rd\gamma^{t,a,u}_s = h(\gamma^{t,a,u}_s,u_s)\,\rd s,
\end{equation*}
for some function $h \colon \Gamma \times U \to U$, where $u$ belongs to the class $\cU$ of bounded measurable functions $u \colon [0,T] \to U := \R^{m \times m} \times \R^{m \times l} \times \R^{d \times m} \times \R^{l \times d}$. The terminal condition is now given by
\begin{equation*}
\big(q^{t,\mu,\Sigma,a,u}_t,R^{t,\Sigma,a,u}_t,\gamma^{t,a,u}_t\big) = (\mu,\Sigma,a) \in \R^m \times \Sm \times \Gamma.
\end{equation*}
Allowing $f$ to depend on $u$, and $g$ to depend on $\gamma_0$ (which makes no difference to the proof of Lemma~\ref{lemmanonlinexpkappa}), and writing $\tilde{\kappa}$ for the regularised version of $\kappa$, we can write
\begin{equation}\label{eq:kappainfv}
\tilde{\kappa}_t(\mu,\Sigma) = \inf_{a \in \Gamma}\hspace{1pt}v(t,\mu,\Sigma,a),
\end{equation}
where
\begin{align}
v(t,\mu,\Sigma,a) := \inf_{u \in \cU}\bigg\{\int_0^t&f(q^{t,\mu,\Sigma,a,u}_s,R^{t,\Sigma,a,u}_s,\gamma^{t,a,u}_s,u_s)\,\rd s\label{eq:defnvrobust}\\
&+ \int_0^t\psi(q^{t,\mu,\Sigma,a,u}_s,\gamma^{t,a,u}_s)\,\rd\bmz_s + g\big(q^{t,\mu,\Sigma,a,u}_0,R^{t,\Sigma,a,u}_0,\gamma^{t,a,u}_0\big)\bigg\}\nonumber
\end{align}
is the value function of our new control problem. As before, to avoid unphysical trajectories, we assign an infinite cost to any controls $u$ such that $(\mu_0,\Sigma_0) \notin \R^m \times \Sm$ or such that the solution to \eqref{eq:Rdynamics} `blows up' in a finite time. Moreover, we assign an infinite cost to those controls which lead to $\rho$ leaving the space of valid correlation matrices $\Upsilon$ (as defined in \eqref{eq:defncorrel}).

Our uncertainty is thus represented by the function $v$. Once the value of this function has been determined, one can use \eqref{eq:kappainfv} and then \eqref{eq:nonlinexpkappa} to evaluate arbitrary functions of the signal process $S$ under the nonlinear expectation $\cE(\hspace{1pt}\cdot\hspace{2pt}|\cY_t)$.

\subsection{A nonlinear backward control problem}

It remains to characterise the value function $v$ defined in \eqref{eq:defnvrobust} as the unique solution of a rough HJB equation. For convenience, we rewrite the controlled dynamics in full as
\begin{align*}
\rd q^{t,\mu,\Sigma,a,u}_s &= b_\mu(q^{t,\mu,\Sigma,a,u}_s,R^{t,\Sigma,a,u}_s,\gamma^{t,a,u}_s)\,\rd s + \lambda(R^{t,\Sigma,a,u}_s,\gamma^{t,a,u}_s)\,\rd\zeta_s, & q^{t,\mu,\Sigma,a,u}_t &= \mu,\\
\rd R^{t,\Sigma,a,u}_s &= b_\Sigma(R^{t,\Sigma,a,u}_s,\gamma^{t,a,u}_s)\,\rd s, & R^{t,\Sigma,a,u}_t &= \Sigma,\\
\rd\gamma^{t,a,u}_s &= h(\gamma^{t,a,u}_s,u_s)\,\rd s, & \gamma^{t,a,u}_t &= a,
\end{align*}
where $\gamma = (\alpha,\sigma,c,\rho)$, and we define
\begin{align*}
b_\mu(q,R,\gamma) &= \alpha q - (Rc^\top + \sigma\rho)cq,\\
b_\Sigma(R,\gamma) &= \sigma\sigma^\top + \alpha R + R\alpha^\top - (Rc^\top + \sigma\rho)(cR + \rho^\top\hspace{-2pt}\sigma^\top),\\
\lambda(R,\gamma) &= Rc^\top + \sigma\rho.
\end{align*}

We note that this is a `backward' control problem in the sense that, in contrast to the classical setting of optimal control, here we prescribe a terminal condition for the state trajectories, and consider a cost associated with their initial value. More significantly, we note that $b_\mu,b_\Sigma \notin \Lipb$ and $\lambda,\psi \notin C^3_b$, so we cannot immediately apply the results of the previous section. Nevertheless, as we will see, the desired results can be recovered with some modifications.

\begin{notation}
In the following we write $|\cdot|$ for the usual Euclidean norm, and $\|A\|$ for the Frobenius norm of a given matrix $A$, i.e.~$\|A\|^2 = \tr(A^\top\hspace{-1pt}A)$. Given an element $\gamma = (\alpha,\sigma,c,\rho)$ of $U = \R^{m \times m} \times \R^{m \times l} \times \R^{d \times m} \times \R^{l \times d}$, we write $\|\gamma\| = \max\big\{\|\alpha\|,\|\sigma\|,\|c\|,\|\rho\|\big\}$. We point out however that, since the space $\Upsilon$ of correlation matrices is uniformly bounded\footnote{Indeed, one can show that $\|\rho\| \leq \sqrt{l}$ for every $\rho \in \Upsilon$.}, the dependence on $\|\rho\|$ is not particularly crucial.

If $A \in \Sm$, so that in particular $A$ is symmetric and positive definite, we write $\lmin(A)$ (resp.~$\lmax(A)$) for the smallest (resp.~largest) eigenvalue of $A$.

Where there is no risk of ambiguity, we will omit the superscripts from the state variables $q,R$ and $\gamma$. Finally, in this section we will use the symbol $\lesssim$ to denote inequality up to a multiplicative constant which may depend on any of the dimensions $d,l,m$, the functions $f,g,h$, the measure of regularity $p$, the terminal time $T$, and the bound $L$, where as usual $L > 0$ is chosen such that $\ver{\bmz}_{\opHol} \leq L$.
\end{notation}

\begin{assumption}\label{filtassumptions}
We assume that
\begin{itemize}
\item $f = f(q,R,\gamma,u)$ and $g = g(q,R,\gamma)$ are continuous, bounded below, and locally Lipschitz in $(q,R,\gamma)$, uniformly in $u$,
\item $h = h(\gamma,u)$ is continuous, surjective in $u$, i.e.~$\{h(\gamma,u) : u \in U\} = U$ for every $\gamma \in \Gamma$, Lipschitz in $\gamma$, uniformly in $u$, and bounded in $\gamma$, locally uniformly in $u$, and moreover, for some $\delta_1 \geq 1$, satisfies
\begin{equation}\label{eq:filthgrowth}
\sup_{\gamma \in \Gamma}\frac{\big\|h(\gamma,u)\big\|}{\|u\|^{\delta_1}}\, \longrightarrow\, 0 \qquad \text{as} \quad \|u\|\, \longrightarrow\, \infty,
\end{equation}
\item for some $\delta_2 > \delta_1$, the running cost $f$ satisfies the asymptotic growth condition:
\begin{equation}\label{eq:fgrowth}
\frac{f(q,R,\gamma,u)}{\big(1 + |q| + \|R\|^2 + \|\gamma\|^2\big)\|u\|^{\delta_2} + \big(1 + |q|^2 + \|R\|^2\big)\big(1 + \|\gamma\|^4\big)}\, \longrightarrow\, \infty
\end{equation}
as $|q| + \|R\| + \|\gamma\| + \|u\| \to \infty$,
\item and the initial cost $g$ satisfies:
\begin{align}
\frac{g(q,R,\gamma)}{|q|^2 + \big(1 + \|R\|\big)\big(1 + \|\gamma\|^2\big)}\, \longrightarrow\, \infty \qquad &\text{as} \qquad |q| + \|R\| + \|\gamma\|\, \longrightarrow\, \infty,\label{eq:ggrowth}\\
\inf_{(q,\gamma) \in \R^m \times \Gamma}g(q,R,\gamma)\, \longrightarrow\, \infty \qquad &\text{as} \qquad \lmin(R)\, \longrightarrow\, 0.\label{eq:ggrowtheigR}
\end{align}
\end{itemize}
If the correlation $\rho$ is known, then one can simply take, for example, $h(\gamma,u) = u$. If $\rho$ is uncertain then, for mostly technical reasons, one must take a little extra care to ensure that correlations close to the boundary of $\Upsilon$ (where $\lmax(\rho\rho^\top) = 1$) are sufficiently penalised. In this case we assume in addition that
\begin{itemize}
\item \begin{equation}\label{eq:ggrowthinrho}
\inf_{q,R,\alpha,\sigma,c}g(q,R,\gamma) \,\longrightarrow\, \infty \qquad \text{as} \quad \lmax(\rho\rho^\top) \,\longrightarrow\, 1,
\end{equation}
\begin{equation}\label{eq:hboundinrho}
\big\|h(\gamma,u)\big\| \leq (1 - \lmax(\rho\rho^\top))\|u\| \qquad \text{for all} \quad (\gamma,u) \in \Gamma \times U.
\end{equation}
\end{itemize}
\end{assumption}

\begin{remark}
The surjectivity of $h$ in $u$ is assumed to ensure that, no matter the choice of terminal condition $(t,\mu,\Sigma,a)$, there always exists a choice of control $u$ such that the state trajectories remain inside their respective domains, so that in particular $R_0 \in \Sm$ and $\rho_0 \in \Upsilon$. This guarantees that the value function $v$ is finite-valued.
\end{remark}

The result of Lemma~\ref{lemmapsiintboundCf} can be recovered in the current setting as follows.

\begin{lemma}\label{lemmafiltintpsibound}
Under Assumption~\ref{filtassumptions}, for any terminal condition $(t,\mu,\Sigma,a)$ and control $u$, we have that
\begin{equation*}
\bigg|\int_0^t\psi(q_s,\gamma_s)\,\rd\bmz_s\bigg| \leq C + \frac{1}{2}\bigg(\int_0^tf(q_s,R_s,\gamma_s,u_s)\,\rd s + g(q_0,R_0,\gamma_0)\bigg),
\end{equation*}
where the constant $C$ depends on $d, l, m, f, g, h, p, T$ and $L$.
\end{lemma}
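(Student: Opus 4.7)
Following the strategy of Lemma~\ref{lemmapsiintboundCf}, the plan is to bound $|\int_0^t \psi(q,\gamma) \, \rd\bmz|$ by a polynomial expression in various norms of the trajectories $q, R, \gamma$ and the control $u$, then absorb this into the right-hand side via the coercivity conditions \eqref{eq:fgrowth}, \eqref{eq:ggrowth}, and \eqref{eq:ggrowtheigR}. The key new difficulty (compared to Lemma~\ref{lemmapsiintboundCf}) is that $\lambda(R,\gamma) = Rc^\top + \sigma\rho$ and $\psi(q,\gamma) = -cq$ are only \emph{linear} in the state rather than bounded, so Proposition~\ref{proproughestimates} and Lemma~\ref{lemmapsiintbound} do not apply directly and the state trajectories themselves enter the bound.

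I would first assume that the trajectory is valid on $[0,t]$, i.e., $R \in \Sm$ throughout $[0,t]$ with $R_0 \in \Sm$ and $\rho_0 \in \Upsilon$; otherwise $g(q_0, R_0, \gamma_0) = +\infty$ by the infinite-cost conventions immediately preceding the statement, and the inequality is trivial. Since $\psi(q,\gamma)$ is controlled by $\zeta$ with Gubinelli derivative $\psi(q,\gamma)' = -c(Rc^\top + \sigma\rho)$, Proposition~\ref{proproughintegral} together with $\ver{\bmz}_{\opHol} \leq L$ yields
\begin{equation*}
\bigg|\int_0^t \psi(q,\gamma) \, \rd\bmz\bigg| \lesssim |\psi(q_0,\gamma_0)| + |\psi(q,\gamma)'_0| + \|\psi(q,\gamma)'\|_p + \|R^{\psi(q,\gamma)}\|_{\frac{p}{2}}.
\end{equation*}
Using the algebraic identity $R^{\psi(q,\gamma)}_{s,t} = -c_s R^q_{s,t} - c_{s,t} q_s - c_{s,t} q_{s,t}$, splitting $q_s = q_0 + q_{0,s}$ (and noting that $|q_{0,s}| \leq \|q\|_p$ for any $s$), and applying the product rule for $p$-variation, each term on the right is bounded by a polynomial in $|q_0|, \|R_0\|, \|\gamma_0\|$, the suprema $\|q\|_\infty, \|R\|_\infty, \|\gamma\|_\infty$, and the $1$-variation seminorms $\|R\|_1, \|\gamma\|_1$ (since $R$ and $\gamma$ are absolutely continuous, so $\|\cdot\|_p, \|\cdot\|_{\frac{p}{2}} \leq \|\cdot\|_1$).

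The bounded-variation norms are then controlled using the coercivity of $f$: from the Riccati ODE one has $\|\dot R\| \lesssim (1+\|\gamma\|)^2(1+\|R\|)^2$, whose mixed monomial $\|R\|^2\|\gamma\|^2$ can be handled via Young's inequality $\|R\|^2\|\gamma\|^2 \leq \frac{1}{2}(\|R\|^2\|\gamma\|^4 + \|R\|^2)$, so that each contribution is pointwise $\leq C_K + f/K$ by \eqref{eq:fgrowth}; hence $\|R\|_1 \leq C_K + \frac{1}{K}\int_0^t f \, \rd s$. Likewise, $\dot\gamma = h(\gamma,u)$ combined with \eqref{eq:filthgrowth} and $\delta_1 < \delta_2$ gives $\|\gamma\|_1 \leq C_K + \frac{1}{K}\int_0^t f \, \rd s$. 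The initial data $|q_0|^2, \|R_0\|, \|\gamma_0\|^2$ are controlled by $g$ via \eqref{eq:ggrowth}, and the $L^\infty$ norms $\|R\|_\infty$, $\|\gamma\|_\infty$ follow from adding initial values to $1$-variations. The corresponding estimates for $\|q\|_p$ and $\|R^q\|_{\frac{p}{2}}$ are obtained from the linear-in-$q$ RDE for $q$ via a subinterval Gr\"onwall argument in the style of Proposition~\ref{proproughestimates}(iii), using the partitioning device of Lemma~\ref{lemmapvarpartitionbound} to combine local estimates.

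The principal obstacle is the final combination step: the bound on $|\int \psi \, \rd\bmz|$ contains products such as $\|\gamma\|_\infty^2\|R\|_1$, $\|\gamma\|_\infty\|R^q\|_{\frac{p}{2}}$, and $(|q_0|+\|q\|_p)\|\gamma\|_1$, which must be absorbed \emph{linearly} into $\int_0^t f + g$. Each such product is to be split via Young's inequality and the factors controlled individually; the $\|\gamma\|^4$ factor in \eqref{eq:fgrowth}, the super-quadratic growth of $g$ in \eqref{eq:ggrowth}, and the strict inequality $\delta_1 < \delta_2$ are precisely what provide the room needed to dominate the higher-order mixed combinations. Choosing $K$ large enough that all accumulated coefficients are at most $\frac{1}{2}$ then delivers the desired inequality.
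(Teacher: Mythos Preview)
Your starting point---bounding the rough integral via Proposition~\ref{proproughintegral} by the four quantities $|\psi(q_0,\gamma_0)|$, $|\psi(q,\gamma)'_0|$, $\|\psi(q,\gamma)'\|_p$ and $\|R^{\psi(q,\gamma)}\|_{\frac{p}{2}}$---is exactly what the paper does. The divergence, and the source of the ``principal obstacle'' you identify, is the decision to bound the last two by \emph{products of global norms} (sup norms times $1$-variations) and then attempt to reassemble. That detour is what creates the problem: once you have a term like $\|\gamma\|_\infty^2\|R\|_1$ or $\|\gamma\|_\infty\|R\|_\infty\|\gamma\|_1$, the suprema may be attained at different times, and the coercivity condition \eqref{eq:fgrowth} only dominates the specific pointwise combination $(1+|q_s|+\|R_s\|^2+\|\gamma_s\|^2)\|u_s\|^{\delta_2}+(1+|q_s|^2+\|R_s\|^2)(1+\|\gamma_s\|^4)$, not arbitrary products of sup norms. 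In particular, bounding $\|\gamma\|_1\leq C_K+\tfrac{1}{K}\int f$ and then multiplying by $\|\gamma\|_\infty^2$ reproduces a factor of $\|\gamma\|_\infty^2$ in front of $\int f$, which is not a priori bounded independently of the terminal condition; your Young-inequality sketch does not close this loop.

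The paper avoids all of this by keeping the estimates \emph{inside} the time integral throughout. Since $R$ and $\gamma$ are absolutely continuous, one has $\|\psi(q,\gamma)'\|_p\leq\|\psi(q,\gamma)'\|_1=\int_0^t|\tfrac{\rd}{\rd s}[c_s(R_sc_s^\top+\sigma_s\rho_s)]|\,\rd s$, and expanding the derivative and estimating pointwise gives directly
\[
\|\psi(q,\gamma)'\|_p \lesssim \int_0^t\big(1+\|R_s\|^2+\|\gamma_s\|^2\big)\|h(\gamma_s,u_s)\| + \big(1+\|R_s\|^2\big)\big(1+\|\gamma_s\|^4\big)\,\rd s.
\]
For the remainder term the paper does \emph{not} pass through $\|R^q\|_{\frac{p}{2}}$ at all (so no Gr\"onwall on $q$ is needed). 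Instead one writes $-R^{\psi(q,\gamma)}_{h,r}=c_h(\int_h^r\rd q_s-(R_hc_h^\top+\sigma_h\rho_h)\zeta_{h,r})+\int_h^r\dot c_s q_r\,\rd s$, applies the Young--L\'oeve inequality to the first bracket to replace the rough increment by a $1$-variation, and then passes to $\|R^{\psi(q,\gamma)}\|_1$ as a pointwise time integral of the same form as above (now also involving $|q_s|$). The resulting integrands match the denominator of \eqref{eq:fgrowth} exactly, so absorption into $\tfrac12\int f$ is immediate and pointwise; the initial-time terms \eqref{eq:psipsidashtbound} are absorbed into $\tfrac12 g$ via \eqref{eq:ggrowth}. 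No products of global norms, no auxiliary Gr\"onwall for $\|q\|_p$, and no Young-inequality recombination are required.
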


\begin{proof}
From the estimate \eqref{eq:roughintbound} in Proposition~\ref{proproughintegral}, we have that
\begin{equation}\label{eq:intpsidzetabound}
\bigg|\int_0^t\psi(q_s,\gamma_s)\,\rd\bmz_s\bigg| \lesssim \big|\psi(q_0,\gamma_0)\big| + \big\|\psi(q,\gamma)'_0\big\| + \big\|R^{\psi(q,\gamma)}\big\|_{\ptvarzt} + \big\|\psi(q,\gamma)'\big\|_{\pvarzt}.
\end{equation}
We aim to bound each of the terms on the right-hand side. Recalling that $\psi(q,\gamma) = -cq$ and $\psi(q,\gamma)' = -c(Rc^\top + \sigma\rho)$, we have that
\begin{align}
\big|\psi(q_0,\gamma_0)\big| + \big\|\psi(q,\gamma)'_0\big\| &\lesssim \|c_0\||q_0| + \|c_0\|\big\|R_0c^\top_0\hspace{-1pt} + \sigma_0\rho_0\big\|\nonumber\\
&\lesssim |q_0|^2 + \big(1 + \|R_0\|\big)\|\gamma_0\|^2.\label{eq:psipsidashtbound}
\end{align}
Writing $(\dot{\alpha},\dot{\sigma},\dot{c},\dot{\rho}) = \dot{\gamma} = h(\gamma,u)$, we have
\begin{align}
\big\|\psi(q,\gamma)'\big\|_{\pvarzt} &\leq \big\|\psi(q,\gamma)'\big\|_{\onevarzt} = \big\|c(Rc^\top + \sigma\rho)\big\|_{\onevarzt}\nonumber\\
&= \int_0^t\Big|\dot{c}_s\big(R_sc^\top_s + \sigma_s\rho_s\big) + c_s\big(b_\Sigma(R_s,\gamma_s)c^\top_s + R_s\dot{c}^\top_s + \dot{\sigma}_s\rho_s + \sigma_s\dot{\rho}_s\big)\Big|\,\rd s\nonumber\\
&\lesssim \int_0^t\big(1 + \|R_s\|^2 + \|\gamma_s\|^2\big)\big\|h(\gamma_s,u_s)\big\| + \big(1 + \|R_s\|^2\big)\big(1 + \|\gamma_s\|^4\big)\,\rd s.\label{eq:psidashbound}
\end{align}
By the Young--L\'oeve inequality (see e.g.~Theorem~6.8 in \cite{FrizVictoir2010}), we have
\begin{equation}\label{eq:YoungLove}
\bigg|\int_h^r(R_sc^\top_s + \sigma_s\rho_s)\,\rd\zeta_s - (R_hc^\top_h + \sigma_h\rho_h)\zeta_{h,r}\bigg| \leq \frac{1}{1 - 2^{-\frac{1}{p}}}\big\|Rc^\top + \sigma\rho\big\|_{\onevarhr}\|\zeta\|_{\pvarhr}
\end{equation}
for any interval $[h,r] \subset [0,t]$. We calculate
\begin{align*}
-R^{\psi(q,\gamma)}_{h,r} &= -\psi(q_r,\gamma_r) + \psi(q_h,\gamma_h) + \psi(q,\gamma)'_h\zeta_{h,r}\\
&= c_hq_{h,r} + c_{h,r}q_r + \psi(q,\gamma)'_h\zeta_{h,r}\\
&= c_h\bigg(\int_h^r \rd q_s - (R_hc^\top_h\hspace{-1pt} + \sigma_h\rho_h)\zeta_{h,r}\bigg) + \int_h^r\dot{c}_sq_r\,\rd s.
\end{align*}
Recalling \eqref{eq:qdynamics} and using \eqref{eq:YoungLove}, we have
\begin{align*}
\big|R^{\psi(q,\gamma)}_{h,r}\big| &\lesssim \|c_h\|\big\|Rc^\top\hspace{-1pt} + \sigma\rho\big\|_{\onevarhr} + \bigg|\int_h^r c_h\big(\alpha_sq_s - (R_sc^\top_s\hspace{-1pt} + \sigma_s\rho_s)c_sq_s\big) + \dot{c}_sq_r\,\rd s\bigg|\\
&\lesssim \int_h^r\Big(\|c_h\|\big|b_\Sigma(R_s,\gamma_s)c^\top_s + R_s\dot{c}^\top_s + \dot{\sigma}_s\rho_s + \sigma_s\dot{\rho}_s\big|\\
&\hspace{50pt} + \|c_h\|\big|\alpha_sq_s - (R_sc^\top_s\hspace{-1pt} + \sigma_s\rho_s)c_sq_s\big| + \|\dot{c}_s\||q_r|\Big)\,\rd s.
\end{align*}
We then obtain
\begin{align}
\big\|&R^{\psi(q,\gamma)}\big\|_{\ptvarzt} \leq \big\|R^{\psi(q,\gamma)}\big\|_{\onevarzt} = \lim_{|\cP| \to 0}\sum_{[h,r] \in \cP}\big|R^{\psi(q,\gamma)}_{h,r}\big|\nonumber\\
&\lesssim \int_0^t\Big(\|c_s\|\big|b_\Sigma(R_s,\gamma_s)c^\top_s + R_s\dot{c}^\top_s + \dot{\sigma}_s\rho_s + \sigma_s\dot{\rho}_s\big|\nonumber\\
&\hspace{50pt} + \|c_s\|\big|\alpha_sq_s - (R_sc^\top_s\hspace{-1pt} + \sigma_s\rho_s)c_sq_s\big| + \|\dot{c}_s\||q_s|\Big)\,\rd s\nonumber\\
&\lesssim \int_0^t\big(1 + |q_s| + \|R_s\|^2 + \|\gamma_s\|^2\big)\big\|h(\gamma_s,u_s)\big\| + \big(1 + |q_s|^2 + \|R_s\|^2\big)\big(1 + \|\gamma_s\|^4\big)\,\rd s,\label{eq:Rpsibound}
\end{align}
where the limit in the above is taken over any sequence of partitions of the interval $[0,t]$ with mesh size tending to zero. Substituting \eqref{eq:psipsidashtbound}, \eqref{eq:psidashbound} and \eqref{eq:Rpsibound} into \eqref{eq:intpsidzetabound}, and using the growth conditions \eqref{eq:filthgrowth}--\eqref{eq:ggrowth} in Assumption~\ref{filtassumptions}, we deduce the result.
\end{proof}

\begin{corollary}\label{corofiltrestrctrsoncpt}
Let $K$ be a compact subset of $\R^m \times \Sm \times \Gamma$. There exists an $M > 0$ such that, when taking the infimum over $u \in \cU$ in \eqref{eq:defnvrobust} for $(t,\mu,\Sigma,a) \in [0,T] \times K$, one may restrict to controls $u$ such that the norms
$$\|q\|_\infty,\ \|R\|_\infty,\ \|\gamma\|_\infty,\ \|R\|_{\onevarzt},\ \|\gamma\|_{\onevarzt}$$
are all bounded by $M$.
\end{corollary}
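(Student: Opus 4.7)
The plan is to mirror the proof of Corollary~\ref{corollaryrestrctrsoncpt}, but with extra care taken because the filtering problem is run backwards in time and the coefficients of \eqref{eq:qdynamics}--\eqref{eq:Rdynamics} are unbounded. I would proceed in three steps: first obtain a uniform upper bound on the value function $v$ over $[0,T] \times K$, then convert this into an integral bound on $\int_0^t f \, \rd s$ together with a pointwise bound on $g(q_0, R_0, \gamma_0)$ via Lemma~\ref{lemmafiltintpsibound}, and finally propagate these bounds to the trajectories $(q, R, \gamma)$ on $[0, t]$.

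For the first step, unlike in Corollary~\ref{corollaryrestrctrsoncpt}, a single arbitrary reference control $u^\ast \in \cU$ will not generally yield finite cost at every $(t, \mu, \Sigma, a) \in [0, T] \times K$, because the backward Riccati equation \eqref{eq:Rdynamics} may drive $R_0$ out of $\Sm$ or blow up in finite time. However the surjectivity of $h$ in $u$ from Assumption~\ref{filtassumptions} allows us to construct, for each terminal datum, a piecewise smooth control $u$ which steers $(q, R, \gamma)$ to a fixed reference point deep inside $\R^m \times \Sm \times \Gamma$ while keeping the trajectory in a bounded subset of the valid domain. Continuity of the resulting cost in $(t, \mu, \Sigma, a)$ together with compactness of $K$ then furnishes a uniform upper bound $v(t, \mu, \Sigma, a) \leq N$.

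Restricting attention to controls with $J(t, \mu, \Sigma, a; u) \leq N + 1$, Lemma~\ref{lemmafiltintpsibound} yields
\[
\int_0^t f(q_s, R_s, \gamma_s, u_s) \, \rd s + g(q_0, R_0, \gamma_0) \leq 2(N + 1 + C).
\]
The growth conditions \eqref{eq:ggrowth} and \eqref{eq:ggrowtheigR} on $g$ (supplemented by \eqref{eq:ggrowthinrho} when $\rho$ is uncertain) then give uniform bounds on $|q_0|, \|R_0\|, \|\gamma_0\|$ together with a uniform positive lower bound on $\lmin(R_0)$ and a uniform upper bound on $\lmax(\rho_0 \rho_0^\top)$ strictly less than $1$. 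The coercivity of $f$ in $u$ from \eqref{eq:fgrowth} yields $\int_0^t \|u_s\|^{\delta_2} \, \rd s \leq C_1$; since $\delta_2 > \delta_1$, H\"older's inequality and \eqref{eq:filthgrowth} then give
\[
\|\gamma\|_{\onevarzt} = \int_0^t \|h(\gamma_s, u_s)\| \, \rd s \leq C_2,
\]
and combining this with $\|\gamma_t\| = \|a\|$ bounded on $K$ provides the uniform bound on $\|\gamma\|_\infty$.

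For the third step, with $\gamma$ uniformly bounded on $[0,t]$ and $R_0 \in \Sm$ uniformly bounded and uniformly away from the boundary of $\Sm$, I would integrate \eqref{eq:Rdynamics} \emph{forward} from $s = 0$; this is the classical forward Kalman--Bucy Riccati equation, whose solutions stay in $\Sm$ and are uniformly bounded on $[0, T]$ by standard Lyapunov estimates, giving $\|R\|_\infty \leq M_1$ and hence $\|R\|_{\onevarzt} = \int_0^t \|b_\Sigma(R_s, \gamma_s)\| \, \rd s \leq M_2$. For $q$, the RDE \eqref{eq:qdynamics} has drift linear in $q$ and rough integrand $\lambda(R, \gamma)$ which is now uniformly bounded together with its Gubinelli derivative, so a standard Gronwall-type argument combining Proposition~\ref{proproughintegral} with the initial bound on $|q_0|$ yields $\|q\|_\infty \leq M_3$. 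Taking $M$ larger than all these constants completes the proof. The principal obstacle is the first step: constructing a family of reference controls realising a uniform upper bound on $v$ despite the potential degeneracy of the backward Riccati dynamics, which is precisely the motivation for imposing the surjectivity of $h$ in Assumption~\ref{filtassumptions}; the subsequent steps are comparatively routine given the growth conditions.
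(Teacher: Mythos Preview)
Your proposal is correct and follows essentially the same strategy as the paper: bound the cost from above, apply Lemma~\ref{lemmafiltintpsibound} to bound $\int_0^t f\,\rd s + g(q_0,R_0,\gamma_0)$, and then propagate to the trajectory norms. The execution differs in a few minor but noteworthy ways.

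First, you are more careful than the paper about the upper bound on $v$. The paper simply invokes ``a similar argument to that in the proof of Corollary~\ref{corollaryrestrctrsoncpt}'', implicitly relying on the remark after Assumption~\ref{filtassumptions} that surjectivity of $h$ ensures $v$ is finite-valued. You correctly identify that a single reference control need not give uniformly finite cost over $K$ (because of backward Riccati degeneracy), and make the compactness/continuity argument explicit. This is a genuine refinement.

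Second, the order and mechanism of the trajectory bounds differ slightly. The paper first bounds $\|\gamma\|_{1;[0,t]}$ and $\|\gamma\|_\infty$, then uses \eqref{eq:ggrowth} only to bound $R_0$, and obtains $\|R\|_\infty$ by observing that \emph{both} endpoints $R_0$ and $R_t=\Sigma$ lie in a bounded set and ``inspecting the ODE'' \eqref{eq:Rdynamics}. You instead extract all endpoint bounds from $g$ at once and then integrate the Riccati equation forward from $R_0$, appealing to standard forward Kalman--Bucy estimates. Both routes work; yours is arguably cleaner since it avoids the somewhat vague ``inspecting the ODE'' step. For $\|q\|_\infty$, the paper uses the Young--L\'oeve estimate \eqref{eq:YoungLove} directly, while you use a Gronwall argument combined with Proposition~\ref{proproughintegral}; again both are valid, and in fact your Gronwall argument implicitly needs the same Young-type bound on the rough integral of $\lambda(R,\gamma)$.
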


\begin{proof}
One can obtain a bound for $\|\gamma\|_{\onevarzt}$ by a similar argument to that in the proof of Corollary~\ref{corollaryrestrctrsoncpt}. Since $\gamma_t = a$ lives in a compact set, we immediately also have a bound for $\|\gamma\|_\infty$.

We infer from \eqref{eq:ggrowth} that both the terminal value $R_t = \Sigma$ and initial value $R_0$ of $R$ must lie in some bounded set, and by inspecting the ODE \eqref{eq:Rdynamics} satisfied by $R$, we deduce that the entire path of $R$ must also live in a bounded set, giving a bound for $\|R\|_\infty$.

Given the bounds for $\|\gamma\|_\infty$ and $\|R\|_\infty$, a bound for $\|R\|_{\onevarzt}$ follows easily from \eqref{eq:Rdynamics}. Finally, inspecting the equation \eqref{eq:qdynamics} satisfied by $q$, in view of \eqref{eq:YoungLove}, we deduce a bound for $\|q\|_\infty$.
\end{proof}

As in the previous sections, let us approximate the rough path $\bmz$ by a smooth path $\eta$. We then obtain the approximate value function
\begin{align}
v^\eta(t,\mu,\Sigma,a) := \inf_{u \in \cU}\bigg\{\int_0^t&f(q^{t,\mu,\Sigma,a,u,\eta}_s,R^{t,\Sigma,a,u}_s,\gamma^{t,a,u}_s,u_s)\,\rd s\label{eq:defnvrobustsmooth}\\
&+ \int_0^t\psi(q^{t,\mu,\Sigma,a,u,\eta}_s,\gamma^{t,a,u}_s)\,\rd\eta_s + g\big(q^{t,\mu,\Sigma,a,u,\eta}_0,R^{t,\Sigma,a,u}_0,\gamma^{t,a,u}_0\big)\bigg\}.\nonumber
\end{align}

\begin{notation}
In the following, we will write $\nabla_\mu$ for the usual gradient with respect to $\mu$, and write $\nabla_\Sigma$ and $\nabla_a$ for the generalised gradients with respect to each of the components of $\Sigma$ and $a$ respectively. We will also write $A:B$ for the inner product of two elements $A,B$ from the same vector space. In particular, when $A,B$ are matrices, $A:B = \tr(A^\top\hspace{-1pt}B)$ denotes the Frobenius inner product of $A$ and $B$.

We shall denote by $\cH$ the class of functions $\tilde{v} \colon [0,T] \times \R^m \times \Sm \times \Gamma \to \R$ which explode asymptotically; that is, those functions $\tilde{v}$ such that
$$\tilde{v}(t,\mu,\Sigma,a) \,\longrightarrow\, \infty$$
as $|\mu| + \|\Sigma\| + \|a\| \to \infty$, and as $\lmin(\Sigma) \to 0$, and, in the case when $\rho$ is uncertain, as $\lmax(\rho\rho^\top) \to 1$.
\end{notation}

\begin{proposition}\label{propsolnsmoothHJB}
Under Assumption~\ref{filtassumptions}, the approximate value function $v^\eta$, as defined in \eqref{eq:defnvrobustsmooth}, is locally Lipschitz continuous with respect to $(t,\mu,\Sigma,a)$, and is the unique viscosity solution of the HJB equation
\begin{equation}\label{eq:filtsmoothHJB}
\frac{\pa v^\eta}{\pa t} + b_\mu\cdot\nabla_\mu v^\eta + b_\Sigma:\nabla_\Sigma v^\eta + \sup_{u \in U}\big\{h:\nabla_a v^\eta - f\big\} + (\lambda\cdot\nabla_\mu v^\eta - \psi)\dot{\eta}_t = 0
\end{equation}
in the class $\cH$ which satisfies the initial condition $v^\eta(0,\mu,\Sigma,a) = g(\mu,\Sigma,a)$.
\end{proposition}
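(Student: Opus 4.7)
The plan is to execute the standard viscosity-solution programme for the smooth control problem \eqref{eq:defnvrobustsmooth}, the main subtleties being the unboundedness of the control set $U$, the polynomial (Riccati-type) growth of the coefficients in the state variables, and the nonstandard class $\cH$ in which uniqueness is claimed. First I would derive the dynamic programming principle
\begin{align*}
v^\eta(t,\mu,\Sigma,a) = \inf_{u \in \cU}\bigg\{&v^\eta\big(r, q^{t,\mu,\Sigma,a,u,\eta}_r, R^{t,\Sigma,a,u}_r, \gamma^{t,a,u}_r\big)\\
&+ \int_r^t f(q_s,R_s,\gamma_s,u_s)\,\rd s + \int_r^t \psi(q_s, \gamma_s)\dot{\eta}_s\,\rd s\bigg\}
\end{align*}
for any $0 \leq r \leq t$. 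Since $\eta$ is smooth the rough integral reduces to a classical Riemann--Stieltjes integral, and the proof is the same as that of Proposition~\ref{propDPP}, the fact that the dynamics run backwards from the terminal data at time $t$ causing no additional difficulty.

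Next, for local Lipschitz regularity of $v^\eta$ in $(t,\mu,\Sigma,a)$, I would fix a compact set $K \subset \R^m \times \Sm \times \Gamma$ and use Corollary~\ref{corofiltrestrctrsoncpt} to restrict the infimum to controls under which $\|q\|_\infty$, $\|R\|_\infty$, $\|\gamma\|_\infty$, $\|R\|_{\onevarzt}$ and $\|\gamma\|_{\onevarzt}$ are all bounded by a fixed $M$ depending only on $K$. On the corresponding trajectories the otherwise-unbounded coefficients $b_\mu, b_\Sigma, \lambda, h$ and costs $f,\psi,g$ are effectively Lipschitz, so a Gronwall estimate for the classical ODEs \eqref{eq:qdynamics}--\eqref{eq:Rdynamics} (with $\zeta$ replaced by smooth $\eta$) gives Lipschitz dependence of the trajectories, and hence of the cost, on the terminal data. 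From the DPP and this regularity, the viscosity property of \eqref{eq:filtsmoothHJB} is obtained by the classical test-function argument; the supremum over the unbounded set $U$ is well-defined because the coercivity condition \eqref{eq:fgrowth} forces it to be attained on a bounded subset of $U$ at each $(t,\mu,\Sigma,a,\nabla v^\eta)$. To verify $v^\eta \in \cH$, one argues that if $v^\eta(t,\mu,\Sigma,a)$ were to stay bounded along a sequence with $|\mu| + \|\Sigma\| + \|a\| \to \infty$, or $\lmin(\Sigma) \to 0$, or (when applicable) $\lmax(\rho\rho^\top) \to 1$, then by Lemma~\ref{lemmafiltintpsibound} both $\int_0^t f\,\rd s$ and $g(q_0,R_0,\gamma_0)$ would be bounded from above, and the growth conditions \eqref{eq:fgrowth}, \eqref{eq:ggrowth}, \eqref{eq:ggrowtheigR} and \eqref{eq:ggrowthinrho} applied to the corresponding trajectories (together with continuity of the Riccati flow in the latter cases) yield contradictions.

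The main obstacle is uniqueness: the Hamiltonian in \eqref{eq:filtsmoothHJB} has polynomial growth in the state variables and the effective state-space $\R^m \times \Sm \times \Gamma$ is neither Euclidean nor compact, so the Crandall--Ishii--Lions comparison principle does not apply out of the box. The class $\cH$ is exactly what is needed to circumvent this. On any compact subset of the interior of the state-space, i.e.~a set on which $|\mu|, \|\Sigma\|, \|a\|$ are bounded, $\lmin(\Sigma)$ is bounded below, and (when applicable) $1 - \lmax(\rho\rho^\top)$ is bounded below, all coefficients of \eqref{eq:filtsmoothHJB} are Lipschitz, so the classical doubling-of-variables argument gives comparison there. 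Given two solutions $v_1, v_2 \in \cH$ agreeing with the initial condition $v_1(0,\cdot) = v_2(0,\cdot) = g$, the fact that both $v_i$ blow up on the relative boundary of such sets allows one to exhaust the whole state-space by compact sets over which the classical comparison forces $v_1 = v_2$, and hence uniqueness in $\cH$ follows.
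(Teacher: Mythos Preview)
Your outline matches the paper's own (sketched) proof quite closely: DPP $\Rightarrow$ viscosity property, local Lipschitz regularity, membership in $\cH$, and then a comparison/uniqueness argument which exploits the blow-up encoded in $\cH$ to localise to compact subsets of the state space. On the viscosity property and on uniqueness you are essentially doing what the paper does (the latter by reference to \cite[Section~5]{AllanCohen2019}).

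There are two places where your sketch is weaker than the paper's. First, for local Lipschitz continuity you argue via Corollary~\ref{corofiltrestrctrsoncpt} and Gronwall. That corollary bounds the \emph{trajectories} $q,R,\gamma$, but not the control $u$ itself, and this is enough for Lipschitz dependence on $(\mu,\Sigma,a)$ since $f$ is assumed Lipschitz in the state variables uniformly in $u$. For the $t$-variable, however, the DPP forces you to bound $\int_r^t f(q_s,R_s,\gamma_s,u_s)\,\rd s$ by $C|t-r|$ along near-optimal controls, which requires a pointwise bound on $f$, hence on $u$. The paper handles this by invoking Theorem~2.2 of Bardi and Da~Lio \cite{BardiDaLio1997}, and explicitly notes that this is where the \emph{strict} inequality $\delta_2 > \delta_1$ in Assumption~\ref{filtassumptions} is used; your sketch does not flag this ingredient.

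Second, your argument that $v^\eta \in \cH$ is slightly incomplete. From Lemma~\ref{lemmafiltintpsibound} and boundedness of $v^\eta$ along a sequence you correctly deduce that $\int_0^t f\,\rd s$ and $g(q_0,R_0,\gamma_0)$ are bounded, but since $g$ is evaluated at the \emph{initial} point $(q_0,R_0,\gamma_0)$ and not at the terminal data $(\mu,\Sigma,a)$, a contradiction does not follow immediately from \eqref{eq:ggrowth}--\eqref{eq:ggrowthinrho}. The paper's heuristic fills this gap: under \eqref{eq:fgrowth} and \eqref{eq:hboundinrho}, extreme or near-degenerate terminal data force the trajectories either to remain in that regime (so $g$ blows up) or to escape it only via expensive controls (so $\int f$ blows up). Your reference to ``continuity of the Riccati flow'' is the right direction for the $\lmin(\Sigma)\to 0$ case, but the other cases need the same dichotomy made explicit.
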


\begin{proof}
As the path $\eta$ is smooth, the associated PDE \eqref{eq:filtsmoothHJB} is classical, except for the nonlinearities inherited from the filtering equations. As the proof of this result is lengthy, and not intended to be the focus of the current work, we will only give a sketch of the proof.

That $v^\eta$ is a viscosity solution of \eqref{eq:filtsmoothHJB} is a standard application of the dynamic programming principle; we refer to the proof of Proposition~4.9 in \cite{AllanCohen2019} for precise details.

Heuristically, as a result of \eqref{eq:fgrowth} and \eqref{eq:hboundinrho}, for terminal conditions $(\mu,\Sigma,a)$ which take extreme or close to degenerate values, i.e.~when either $|\mu| + \|\Sigma\| + \|a\| \gg 1$ or $\lmin(\Sigma) \approx 0$ or $\lmax(\rho\rho^\top) \approx 1$, it takes very expensive controls to allow the state trajectories $(q,R,\gamma)$ to escape these parts of their domain. It then follows from the growth conditions \eqref{eq:ggrowth}--\eqref{eq:ggrowthinrho} that the value function itself must explode as one approaches these extreme and degenerate values; that is, $v^\eta \in \cH$.

One can prove that $v^\eta$ is locally Lipschitz in all of its arguments by adapting the proof of Theorem~2.2 in \cite{BardiDaLio1997}, which in particular requires the strict inequality $\delta_2 > \delta_1$ in Assumption~\ref{filtassumptions}.

The controlled dynamics do not satisfy the standard Lipschitz condition which would be required to be able to apply a standard uniqueness result for Hamilton--Jacobi equations on unbounded domains, as in e.g.~Yong and Zhou \cite[Chapter~4]{YongZhou1999}. Nevertheless, uniqueness for an equation of the same form as \eqref{eq:filtsmoothHJB} was established in \cite[Section~5]{AllanCohen2019}, and an analogous argument may be used here. The main insight of this result is that the extra condition one should impose to obtain uniqueness is that solutions belong to the space $\cH$; that is, one should restrict to solutions which explode as they approach the boundary.
\end{proof}

The main result of this section is given by the following theorem.

\begin{theorem}
Under Assumption~\ref{filtassumptions}, the value function $v$, as defined in \eqref{eq:defnvrobust}, solves the rough HJB equation
\begin{equation*}
\rd v + \big(b_\mu\cdot\nabla_\mu v + b_\Sigma:\nabla_\Sigma v\big)\hspace{1pt}\rd t + \sup_{u \in U}\big\{h:\nabla_a v - f\big\}\hspace{0.3pt}\rd t + \lambda\cdot\nabla_\mu v\,\rd\zeta - \psi\,\rd \bmz = 0
\end{equation*}
with
$$v(0,\mu,\Sigma,a) = g(\mu,\Sigma,a)$$
in the sense of Definition~\ref{defnsolnroughHJB}. Moreover, writing $v = v^\zeta$, the map from $\sC^{0,p}_g([0,T];\R^d) \to \R$ given by $\bmz \mapsto v^\zeta(t,\mu,\Sigma,a)$ is locally uniformly continuous with respect to each of the rough path metrics $\varrho_p$ and $\varrho_{\opHol}$, locally uniformly in $(t,\mu,\Sigma,a)$.
\end{theorem}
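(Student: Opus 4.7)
The plan is to follow the blueprint of Theorem~\ref{thmvaluefuncsolvesroughHJB}, but localised carefully to account for the fact that in the filtering setting the coefficients $b_\mu, b_\Sigma, \lambda, \psi$ are not globally Lipschitz/bounded, and the state trajectories $(q,R,\gamma)$ live in a constrained domain $\R^m\times\Sm\times\Gamma$. The first step is to fix a compact set $K\subset\R^m\times\Sm\times\Gamma$ and apply Corollary~\ref{corofiltrestrctrsoncpt} to obtain a constant $M>0$ such that, for terminal data $(t,\mu,\Sigma,a)\in[0,T]\times K$, the infimum defining $v$ (and similarly $v^\eta$) may be restricted to controls $u\in\cU^M\subseteq\cU$ for which the associated trajectories satisfy $\|q\|_\infty,\|R\|_\infty,\|\gamma\|_\infty\leq M$ and $\|R\|_1,\|\gamma\|_1\leq M$. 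In particular, $R$ stays a uniform distance away from the boundary of $\Sm$ and, when $\rho$ is uncertain, $\rho$ stays away from the boundary of $\Upsilon$, so on the effective state domain the coefficients $b_\mu,b_\Sigma,\lambda,\psi$ and the costs $f,g$ can be truncated/modified outside a compact set to agree with $\Lipb$ and $C^3_b$ functions respectively, without changing the value function on $[0,T]\times K$.

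Next, after this localisation, Theorem~\ref{thmsolnofRDE} and Proposition~\ref{proproughstability} apply to the (truncated) dynamics. Let $\bme,\bmz\in\sC_g^{0,p}$ with $\ver{\bme}_{\opHol},\ver{\bmz}_{\opHol}\leq L$, and denote by $q^\eta,q^\zeta$ the components of the solutions driven by $\bme,\bmz$ respectively. Noting that $\|\gamma\|_{p/2}\leq T^{1-2/p}\|\gamma\|_1\leq T^{1-2/p}M$, the stability estimate \eqref{eq:roughstability} yields
\begin{equation*}
\|q^\eta-q^\zeta\|_\infty \lesssim \varrho_p(\bme,\bmz),
\end{equation*}
and, by \eqref{eq:roughstabilityintegral} applied to $\psi$,
\begin{equation*}
\bigg\|\int_0^\cdot\psi(q^\eta_s,\gamma_s)\,\rd\bme_s-\int_0^\cdot\psi(q^\zeta_s,\gamma_s)\,\rd\bmz_s\bigg\|_\infty \lesssim \varrho_p(\bme,\bmz).
\end{equation*}
Combining these with the (local) Lipschitz bounds on $f$ and $g$ on the set where the trajectories are confined, and taking suprema over $u\in\cU^M$, gives
\begin{equation*}
\sup_{(t,\mu,\Sigma,a)\in[0,T]\times K}\big|v^\eta(t,\mu,\Sigma,a)-v^\zeta(t,\mu,\Sigma,a)\big|\lesssim \varrho_p(\bme,\bmz)\lesssim \varrho_{\opHol}(\bme,\bmz).
\end{equation*}

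Choosing a sequence $(\eta^n)_{n\geq 1}$ of smooth paths with $\varrho_{\opHol}(\bme^n,\bmz)\to 0$ (which exists because $\bmz\in\sC_g^{0,p}$) produces the locally uniform convergence $v^{\eta^n}\to v$ required by Definition~\ref{defnsolnroughHJB}, and the continuity of $v^{\eta^n}$ coming from Proposition~\ref{propsolnsmoothHJB} passes to $v$ under uniform limits. The local uniform continuity of $\bmz\mapsto v^\zeta(t,\mu,\Sigma,a)$ in both rough path metrics is the exactly same estimate applied between two arbitrary geometric rough paths $\bme,\bmz$ with bounded H\"older norm.

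The main obstacle is the first step: verifying that the truncation outside the compact region traced out by the optimal trajectories does not affect $v$ on $[0,T]\times K$. This requires that Corollary~\ref{corofiltrestrctrsoncpt} genuinely rules out all controls whose trajectories exit the region of size $M$, which in turn relies on the growth conditions \eqref{eq:filthgrowth}--\eqref{eq:ggrowthinrho} in Assumption~\ref{filtassumptions} and the improved integral bound of Lemma~\ref{lemmafiltintpsibound}. Once this localisation is in place, the remainder of the argument is a direct adaptation of Theorem~\ref{thmvaluefuncsolvesroughHJB}; the rough-path machinery plays exactly the same role, and the filtering-specific nonlinearities (in particular the quadratic term in $R$) are handled entirely by the a priori bounds from Corollary~\ref{corofiltrestrctrsoncpt}.
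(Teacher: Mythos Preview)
Your proposal is correct and takes essentially the same approach as the paper: localise via Corollary~\ref{corofiltrestrctrsoncpt}, truncate the (polynomial) coefficients outside the region where the trajectories are confined so that Proposition~\ref{proproughstability} applies, and conclude as in Theorem~\ref{thmvaluefuncsolvesroughHJB}. The paper differs only in one minor technical point: it derives $\|q^\eta - q^\zeta\|_\infty \lesssim \|\eta - \zeta\|_p$ directly from a Young--L\'oeve estimate plus Gr\"onwall (exploiting that $\lambda(R,\gamma) = Rc^\top + \sigma\rho$ does not depend on $q$, so the $\zeta$-integral is a Young integral of a bounded-variation integrand) before invoking Proposition~\ref{proproughstability} for the $\psi$-integral, whereas you apply Proposition~\ref{proproughstability} for both estimates after truncation; either route works.
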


\begin{proof}
Let $K$ be a compact subset of $\R^m \times \Sm \times \Gamma$ and let $\bme \in \sC^p$ be another rough path such that $\varrho_{\opHol}(\bme,\bmz) \leq 1$. By possibly replacing $L$ by $L + 1$, we may assume that $\ver{\bme}_{\opHol} \leq L$. Let us write $q^\eta$ (resp.~$q^\zeta$) for the solution of \eqref{eq:qdynamics} driven by $\eta$ (resp.~$\zeta$), and write $v^\eta$ (resp.~$v^\zeta$) for the corresponding value function, as defined in \eqref{eq:defnvrobust}.

By Corollary~\ref{corofiltrestrctrsoncpt}, there exists a constant $M > 0$ such that, for terminal conditions $(t,\mu,\Sigma,a) \in [0,T] \times K$, we may restrict to controls $u \in \cU^M \subseteq \cU$ such that
$$\|q^\eta\|_\infty,\ \|q^\zeta\|_\infty,\ \|R\|_\infty,\ \|\gamma\|_\infty,\ \|R\|_{\onevarzt},\ \|\gamma\|_{\onevarzt}$$
are all bounded by $M$. In the following we will allow the multiplicative constant indicated by the symbol $\lesssim$ to also depend on $M$.

By the Young--L\'oeve inequality (see e.g.~Theorem~6.8 in \cite{FrizVictoir2010}), we have
\begin{align*}
\bigg|\int_s^t(R_rc^\top_r\hspace{-1pt} + \sigma_r\rho_r)\,\rd(\eta - \zeta)_r\bigg| &\lesssim \big\|R_sc^\top_s\hspace{-1pt} + \sigma_s\rho_s\big\||(\eta_t - \zeta_t) - (\eta_s - \zeta_s)|\\
&\qquad + \big\|Rc^\top\hspace{-1pt} + \sigma\rho\big\|_{\onevarst}\|\eta - \zeta\|_{\pvarst}\\
&\lesssim \|\eta - \zeta\|_{\pvarst},
\end{align*}
from which we deduce that
$$|q^\eta_s - q^\zeta_s| \lesssim \int_s^t|q^\eta_r - q^\zeta_r|\,\rd r + \|\eta - \zeta\|_{\pvarst}$$
for all $s \in [0,t]$, and thus, by Gr\"onwall's inequality, that
\begin{equation}\label{eq:qsupbound}
\|q^\eta - q^\zeta\|_\infty \lesssim \|\eta - \zeta\|_{\pvarzt}.
\end{equation}

Since the state variables $q^\eta,q^\zeta,R$ and $\gamma$ are uniformly bounded, we are free to modify the coefficients $b_\mu,\lambda$ and $\psi$ outside of some large ball containing the domain of the state variables in its interior, without affecting the solutions $q^\eta,q^\zeta$. We may therefore pretend that actually $b_\mu \in \Lipb$ and $\lambda,\psi \in C^3_b$, so that in particular the hypotheses of Proposition~\ref{proproughstability} are satisfied. By the same argument, we may also suppose that $f$ and $g$ are Lipschitz in $q$.

By Proposition~\ref{proproughstability} combined with \eqref{eq:qsupbound}, we obtain
\begin{equation}\label{eq:filtintpsibound}
\bigg\|\int_0^\cdot\psi(q^\eta_s,\gamma_s)\,\rd\bme_s - \int_0^\cdot\psi(q^\zeta_s,\gamma_s)\,\rd\bmz_s\bigg\|_{\pvarzt} \lesssim \varrho_p(\bme,\bmz).
\end{equation}
Using \eqref{eq:qsupbound} and \eqref{eq:filtintpsibound}, we have, for any terminal condition $(t,\mu,\Sigma,a) \in [0,T] \times K$, that
\begin{align*}
\big|v^\eta&(t,\mu,\Sigma,a) - v^\zeta(t,\mu,\Sigma,a)\big|\\
&\leq \sup_{u \in \cU^M}\bigg|\int_0^t\big(f(q^\eta_s,R_s,\gamma_s,u_s) - f(q^\zeta_s,R_s,\gamma_s,u_s)\big)\,\rd s\\
&\qquad \qquad + \int_0^t\psi(q^\eta_s,\gamma_s)\,\rd\bme_s - \int_0^t\psi(q^\zeta_s,\gamma_s)\,\rd\bmz_s + g(q^\eta_0,R_0,\gamma_0) - g(q^\zeta_0,R_0,\gamma_0)\bigg|\\
&\lesssim \sup_{u \in \cU^M}\bigg(\int_0^t|q^\eta_s - q^\zeta_s|\,\rd s + \varrho_p(\bme,\bmz) + |q^\eta_0 - q^\zeta_0|\bigg)\\
&\lesssim \varrho_p(\bme,\bmz) \lesssim \varrho_{\opHol}(\bme,\bmz),
\end{align*}
and we conclude as we did in the proof of Theorem~\ref{thmvaluefuncsolvesroughHJB}.
\end{proof}

\begin{remark}
As we have seen, in order to prevent degeneracy of the control problem it is necessary to control the derivative of the parameters, rather than controlling them directly. This allows us to calibrate, not only beliefs about reasonable values the parameters could take, but also at what rate they should able to vary. For example, if one believes that the true parameters should remain fairly constant then one can put a large penalty on the magnitude of this derivative. In fact, by taking the penalty to be infinite for all non-zero controls (derivatives), we obtain a setting with unknown parameters which are constant in time. The discrete-time results of \cite{Cohen2017} suggest that we should then expect the resulting filter to converge to the true parameter. (Although our observations are not independent and identically distributed as in \cite{Cohen2017}, under reasonable conditions they are ergodic, and this leads to consistency properties in the likelihood function; see \cite{DoucMoulinesOlssonvanHandel2011}. In this case, we expect that this would lead to the nonlinear expectation asymptotically converging to the `true' expectation, and the analysis of \cite{Cohen2017} further suggests an interpretation of the nonlinear expectation in terms of confidence intervals.) Establishing precise convergence results could be the subject of future research.
\end{remark}

\vspace{12pt}
\noindent \textbf{Acknowledgements}\ \ A.L. Allan was supported by the Engineering and Physical Sciences Research Council [EP/L015811/1]. S.N. Cohen was supported by the Oxford-Man Institute for Quantitative Finance and the Oxford-Nie Financial Big Data Laboratory. The authors would also like to thank David J. Pr\"omel for helpful discussions.
\vspace{2pt}

\appendix

\section{Rough path estimates}\label{AppendixRoughPaths}

Before establishing existence of solutions to the RDE \eqref{eq:RDEforX}, we recall some useful estimates from Friz and Zhang \cite{FrizZhang2018}.

\begin{lemma}[Lemma~3.6 in \cite{FrizZhang2018}]\label{lemmainvariancebounds}
Let $\psi \in C^3_b$, $\gamma \in \Cptvar$ and $\bmz \in \sC^p$ with $\ver{\bmz}_p \leq L$. Let $(X,X') \in \sD^p_\zeta$. Then
$$\bigg(\int_0^\cdot\psi(X_s,\gamma_s)\,\rd\bmz_s,\psi(X,\gamma)\bigg) \in \sD^p_\zeta$$
is a controlled rough path, and we have
\begin{align*}
\big\|\psi(X,\gamma)\big\|_p &\leq C\Big(\big(|X'_0| + \|X'\|_p\big)\|\zeta\|_p + \big\|R^X\big\|_{\frac{p}{2}} + \|\gamma\|_{\frac{p}{2}}\Big),\\
\Big\|R^{\int_0^\cdot\psi(X_s,\gamma_s)\,\rd\bmz_s}\Big\|_{\frac{p}{2}} &\leq C\Big(1 + |X'_0| + \|X'\|_p + \big\|R^X\big\|_{\frac{p}{2}} + \|\gamma\|_{\frac{p}{2}}\Big)^{\hspace{-2pt}2}\ver{\bmz}_p,
\end{align*}
where the constant $C$ depends on $\psi,p$ and $L$.
\end{lemma}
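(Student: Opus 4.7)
The plan is to first check that $\psi(X,\gamma)$ is a controlled rough path with Gubinelli derivative $\psi(X,\gamma)' = D_x\psi(X,\gamma)X'$, and then to derive the two norm estimates by combining Taylor expansion with the basic rough integral bound \eqref{eq:roughintbound}.

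First, since $\gamma \in \Cptvar$, one has $(\gamma,0) \in \sD_\zeta^p$, so $(X,\gamma)$ is jointly controlled by $\zeta$ with derivative $(X',0)$. A first order Taylor expansion of $\psi$ around $(X_s,\gamma_s)$ gives
\begin{equation*}
\psi(X_t,\gamma_t) - \psi(X_s,\gamma_s) = D_x\psi(X_s,\gamma_s)X_{s,t} + D_a\psi(X_s,\gamma_s)\gamma_{s,t} + \textrm{O}\big(|X_{s,t}|^2 + |\gamma_{s,t}|^2\big),
\end{equation*}
and substituting $X_{s,t} = X'_s\zeta_{s,t} + R^X_{s,t}$ exhibits $D_x\psi(X_s,\gamma_s)X'_s$ as the correct Gubinelli derivative, with remainder
\begin{equation*}
R^{\psi(X,\gamma)}_{s,t} = D_x\psi(X_s,\gamma_s)R^X_{s,t} + D_a\psi(X_s,\gamma_s)\gamma_{s,t} + \textrm{O}\big(|X_{s,t}|^2 + |\gamma_{s,t}|^2\big).
\end{equation*}
Using boundedness of the derivatives of $\psi$, taking $\frac{p}{2}$-variation, and noting that $\|X\|_p^2$ dominates the $\frac{p}{2}$-variation of $|X_{\cdot,\cdot}|^2$ (and similarly for $\gamma$), this yields the auxiliary estimate
\begin{equation*}
\big\|R^{\psi(X,\gamma)}\big\|_{\frac{p}{2}} \lesssim \|X\|_p^2 + \big\|R^X\big\|_{\frac{p}{2}} + \|\gamma\|_{\frac{p}{2}},
\end{equation*}
where I have also used $\|\gamma\|_p \leq \|\gamma\|_{\frac{p}{2}}$ and $\|\gamma\|_{\frac{p}{2}}^2 \lesssim \|\gamma\|_{\frac{p}{2}}$ (up to an $L$-dependent constant after possibly adjusting the ambient $L$).

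For the first displayed estimate, I would simply use the Lipschitz bound together with the triangle inequality $|X_{s,t}| \leq |X'_s||\zeta_{s,t}| + |R^X_{s,t}|$ and the pointwise bound $\|X'\|_\infty \leq |X'_0| + \|X'\|_p$. Taking $p$-variation this gives $\|X\|_p \lesssim (|X'_0| + \|X'\|_p)\|\zeta\|_p + \|R^X\|_{\frac{p}{2}}$, whence
\begin{equation*}
\|\psi(X,\gamma)\|_p \lesssim \|X\|_p + \|\gamma\|_p \lesssim \big(|X'_0| + \|X'\|_p\big)\|\zeta\|_p + \big\|R^X\big\|_{\frac{p}{2}} + \|\gamma\|_{\frac{p}{2}}.
\end{equation*}

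For the second estimate, write $Y = \int_0^\cdot \psi(X_s,\gamma_s)\,\rd\bmz_s$, so that by definition of the rough integral $Y' = \psi(X,\gamma)$ and $R^Y_{s,t} = Y_{s,t} - \psi(X_s,\gamma_s)\zeta_{s,t}$. By Proposition~\ref{proproughintegral} applied to the controlled rough path $(\psi(X,\gamma), \psi(X,\gamma)')$,
\begin{equation*}
\big|R^Y_{s,t}\big| \leq \big|\psi(X,\gamma)'_s\zeta^{(2)}_{s,t}\big| + C_p\Big(\big\|R^{\psi(X,\gamma)}\big\|_{\ptvarst}\|\zeta\|_{\pvarst} + \|\psi(X,\gamma)'\|_{\pvarst}\big\|\zeta^{(2)}\big\|_{\ptvarst}\Big).
\end{equation*}
Taking $\frac{p}{2}$-variation, each of the three terms produces a factor bounded by $\ver{\bmz}_p$ times one of $\|\psi(X,\gamma)'\|_\infty$, $\|R^{\psi(X,\gamma)}\|_{\frac{p}{2}}$, or $\|\psi(X,\gamma)'\|_p$. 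The sup norm and $p$-variation of $\psi(X,\gamma)' = D_x\psi(X,\gamma)X'$ are estimated from the product rule and the same ingredients used above: one obtains
\begin{equation*}
\|\psi(X,\gamma)'\|_\infty + \|\psi(X,\gamma)'\|_p \lesssim |X'_0| + \|X'\|_p + \big(|X'_0| + \|X'\|_p\big)\big(\|X\|_p + \|\gamma\|_p\big),
\end{equation*}
and combining with the bound on $\|R^{\psi(X,\gamma)}\|_{\frac{p}{2}}$ derived above, everything fits into the quadratic expression $(1 + |X'_0| + \|X'\|_p + \|R^X\|_{\frac{p}{2}} + \|\gamma\|_{\frac{p}{2}})^2$.

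The main bookkeeping obstacle is ensuring that no spurious cross terms are generated in the squaring step, and in particular that $\|X\|_p$—which appears quadratically in the bound on $\|R^{\psi(X,\gamma)}\|_{\frac{p}{2}}$—can be reabsorbed into a single square of the controlled rough path norm $|X'_0| + \|X'\|_p + \|R^X\|_{\frac{p}{2}}$ via the estimate $\|X\|_p \lesssim (|X'_0| + \|X'\|_p)\|\zeta\|_p + \|R^X\|_{\frac{p}{2}}$ and the hypothesis $\ver{\bmz}_p \leq L$. Apart from this careful accounting, the argument is a direct assembly of Taylor expansion, Proposition~\ref{proproughintegral}, and elementary $q$-variation inequalities.
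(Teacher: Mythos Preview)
The paper does not actually prove this lemma: it is quoted verbatim as Lemma~3.6 from Friz and Zhang \cite{FrizZhang2018} and used as a black box in the appendix, so there is no ``paper's own proof'' to compare against. Your argument is the standard one---Taylor expand $\psi$ to identify the Gubinelli derivative and remainder, then feed the resulting controlled rough path $(\psi(X,\gamma),D_x\psi(X,\gamma)X')$ into the sewing estimate \eqref{eq:roughintbound}---and it is essentially correct.

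One small slip: the claim ``$\|\gamma\|_{\frac{p}{2}}^2 \lesssim \|\gamma\|_{\frac{p}{2}}$ up to an $L$-dependent constant'' is false, since $L$ bounds only $\ver{\bmz}_p$ and nothing constrains $\|\gamma\|_{\frac{p}{2}}$. The clean way to avoid generating a $|\gamma_{s,t}|^2$ term in the first place is to split $\psi(X_t,\gamma_t)-\psi(X_s,\gamma_s)$ as $\big(\psi(X_t,\gamma_t)-\psi(X_t,\gamma_s)\big)+\big(\psi(X_t,\gamma_s)-\psi(X_s,\gamma_s)\big)$ and Taylor expand only the second bracket in $X$; the first bracket is then bounded directly by $\|D_a\psi\|_\infty|\gamma_{s,t}|$, yielding $\|R^{\psi(X,\gamma)}\|_{\frac{p}{2}} \lesssim \|X\|_p^2 + \|R^X\|_{\frac{p}{2}} + \|\gamma\|_{\frac{p}{2}}$ as in Proposition~\ref{proproughestimates}(ii). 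That said, your slip is harmless for the lemma as stated: the auxiliary bound is only used inside the squared expression $(1 + |X'_0| + \|X'\|_p + \|R^X\|_{\frac{p}{2}} + \|\gamma\|_{\frac{p}{2}})^2$ in the second estimate, which absorbs $\|\gamma\|_{\frac{p}{2}}^2$ anyway.
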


The following lemma is a direct consequence of Lemma~3.4 in \cite{FrizZhang2018}.

\begin{lemma}\label{lemmacontractionbounds}
Let $\psi \in C^2_b$, $\gamma,\vartheta \in \Cptvar$ and $\bme,\bmz \in \sC^p$ with $\ver{\bme}_p,\ver{\bmz}_p \leq L$. Let $(X,X') \in \sD^p_\eta$ and $(Y,Y') \in \sD^p_\zeta$. For any $\delta \geq 1$, we have the following estimate
\begin{align*}
&\big\|\psi(X,\gamma) - \psi(Y,\vartheta)\big\|_p + \delta\Big\|R^{\int_0^\cdot\psi(X_s,\gamma_s)\,\rd\bme_s} - R^{\int_0^\cdot\psi(Y_s,\vartheta_s)\,\rd\bmz_s}\Big\|_{\frac{p}{2}}\\
&\leq C\bigg(\big\|R^{\psi(X,\gamma)} - R^{\psi(Y,\vartheta)}\big\|_{\frac{p}{2}} + \delta\Big(\big|\psi(X,\gamma)'_0\big| + \big\|\psi(X,\gamma)'\big\|_p + \big\|R^{\psi(X,\gamma)}\big\|_{\frac{p}{2}}\Big)\varrho_p(\bme,\bmz)\\
&\hspace{11pt} + \delta\Big(\big|\psi(X,\gamma)'_0 - \psi(Y,\vartheta)'_0\big| + \big\|\psi(X,\gamma)' - \psi(Y,\vartheta)'\big\|_p + \big\|R^{\psi(X,\gamma)} - R^{\psi(Y,\vartheta)}\big\|_{\frac{p}{2}}\Big)\ver{\bmz}_p\bigg),
\end{align*}
where the constant $C$ depends on $p$ and $L$.
\end{lemma}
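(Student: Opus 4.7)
My plan is to reduce the statement directly to Lemma~3.4 of \cite{FrizZhang2018} by viewing $(X,\gamma)$ (resp.\ $(Y,\vartheta)$) as a single controlled rough path with values in $\R^m\times\R^k$. Since $\gamma,\vartheta\in\Cptvar$, they are trivially controlled by $\eta$ and $\zeta$ respectively with Gubinelli derivative zero; indeed $R^{\gamma}_{s,t} = \gamma_{s,t}$, which has finite $\frac{p}{2}$-variation since $\frac{p}{2}\leq p$. Therefore $\widetilde{X}:=(X,\gamma)$ with $\widetilde{X}':=(X',0)$ lies in $\sD^p_\eta$, and similarly $\widetilde{Y}:=(Y,\vartheta)$ with $\widetilde{Y}':=(Y',0)$ lies in $\sD^p_\zeta$, with remainders
\[
R^{\widetilde{X}}_{s,t}=(R^X_{s,t},\gamma_{s,t}),\qquad R^{\widetilde{Y}}_{s,t}=(R^Y_{s,t},\vartheta_{s,t}).
\]

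Next I would apply Lemma~3.4 of \cite{FrizZhang2018} to the function $\psi\in C^2_b$ acting on the $\R^m\times\R^k$-valued controlled rough paths $(\widetilde{X},\widetilde{X}')$ and $(\widetilde{Y},\widetilde{Y}')$. This result simultaneously delivers (i) a Lipschitz-type estimate on the composed controlled rough path $\psi(\widetilde{X})\in\sD^p_\eta$ in terms of the composed controlled rough path $\psi(\widetilde{Y})\in\sD^p_\zeta$, controlling the $p$-variation of $\psi(\widetilde{X})-\psi(\widetilde{Y})$ by $\|R^{\psi(\widetilde{X})}-R^{\psi(\widetilde{Y})}\|_{p/2}$ plus a term quantifying the difference of the driving rough paths weighted by the size of $\psi(\widetilde{Y})$, and (ii) a stability estimate for the rough integral $\int\psi(\widetilde{X})\,\rd\bme$ against $\int\psi(\widetilde{Y})\,\rd\bmz$, with the natural splitting in which one term contains the difference $\varrho_p(\bme,\bmz)$ multiplied by a norm of $\psi(\widetilde{X})$ as a controlled rough path, and the other contains the difference $\|\psi(\widetilde{X})-\psi(\widetilde{Y})\|_{\sD^p}$ multiplied by $\ver{\bmz}_p$. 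The scaling factor $\delta\geq 1$ appears naturally by multiplying the second estimate by $\delta$ and adding it to the first, exactly as in the statement.

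The only bookkeeping is to rewrite the norms of $\widetilde{X},\widetilde{X}'$ in terms of the original data: since $\widetilde{X}'=(X',0)$ we have $\|\widetilde{X}'\|_p=\|X'\|_p$ and $|\widetilde{X}'_0|=|X'_0|$, while $\|R^{\widetilde{X}}\|_{p/2}\leq \|R^X\|_{p/2}+\|\gamma\|_{p/2}$. Applying the chain rule to $\psi\in C^2_b$ gives $\psi(\widetilde{X})'=D\psi(\widetilde{X})\,\widetilde{X}'=D_x\psi(X,\gamma)X'$, which matches the definition in \eqref{eq:GubderivlambdaXgamma} used throughout the paper, so the quantities $\psi(X,\gamma)'$ on the right-hand side of the stated inequality coincide with those produced by Friz--Zhang.

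The main (very minor) obstacle is simply verifying that the form of Lemma~3.4 of \cite{FrizZhang2018}, as stated there, does include the joint stability in both the integrand and the driver, and that their estimate does not require $\psi$ to be defined only on the range of a single controlled rough path; this is standard since $\psi\in C^2_b$ is globally bounded with bounded derivatives. Once this is checked, the only thing left is to choose the constant $C=C(p,L)$ to absorb the factors coming from $\ver{\bme}_p,\ver{\bmz}_p\leq L$ that arise when bounding $\|\psi(\widetilde{X})\|_{\sD^p}$ via Lemma~\ref{lemmainvariancebounds}, completing the proof.
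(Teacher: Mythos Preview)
Your proposal is correct and matches the paper's approach: the paper simply states that the lemma ``is a direct consequence of Lemma~3.4 in \cite{FrizZhang2018}'', and your reduction via the joint controlled rough path $\widetilde{X}=(X,\gamma)$ with $\widetilde{X}'=(X',0)$ is precisely the bookkeeping needed to make that citation work. One very minor remark: you do not actually need to invoke Lemma~\ref{lemmainvariancebounds} at the end, since the quantities $|\psi(X,\gamma)'_0|+\|\psi(X,\gamma)'\|_p+\|R^{\psi(X,\gamma)}\|_{p/2}$ appear explicitly on the right-hand side of the stated estimate rather than being absorbed into the constant.
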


\begin{lemma}[Lemma~3.5 in \cite{FrizZhang2018}]\label{lemmacompwithsmoothfuncbounds}
Let $\psi \in C^3_b$, $\gamma,\vartheta \in \Cptvar$ and $\bme,\bmz \in \sC^p$ with $\ver{\bme}_p$, $\ver{\bmz}_p \leq L$. Let $(X,X') \in \sD^p_\eta$ and $(Y,Y') \in \sD^p_\zeta$. Suppose that
$$|X'_0| + \|X'\|_p + \big\|R^X\big\|_{\frac{p}{2}} \leq M \qquad \text{and} \qquad |Y'_0| + \|Y'\|_p + \big\|R^Y\big\|_{\frac{p}{2}} \leq M$$
and $\|\gamma\|_{\frac{p}{2}},\|\vartheta\|_{\frac{p}{2}} \leq M$ for some $M > 0$. Then we have
\begin{align*}
\big\|\psi(X,\gamma)' - \psi(Y,\vartheta)'\big\|_p \leq C\Big(|X_0& - Y_0| + |X'_0 - Y'_0| + \|X' - Y'\|_p\\
&+ \big\|R^X - R^Y\big\|_{\frac{p}{2}} + \|\gamma - \vartheta\|_\infty + \|\gamma - \vartheta\|_{\frac{p}{2}} + \varrho_p(\bme,\bmz)\Big),
\end{align*}
\vspace{-18pt}
\begin{align*}
\big\|R^{\psi(X,\gamma)} - R^{\psi(Y,\vartheta)}\big\|_{\frac{p}{2}} \leq C\Big(|X_0& - Y_0| + |X'_0 - Y'_0| + \|X' - Y'\|_p\|\zeta\|_p\\
&+ \big\|R^X - R^Y\big\|_{\frac{p}{2}} + \|\gamma - \vartheta\|_\infty + \|\gamma - \vartheta\|_{\frac{p}{2}} + \varrho_p(\bme,\bmz)\Big),
\end{align*}
where the constant $C$ depends on $\psi,p,L$ and $M$.
\end{lemma}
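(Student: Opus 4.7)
The plan is to adapt the proof of Lemma~3.5 in \cite{FrizZhang2018} to incorporate the additional dependence of $\psi$ on the control paths $\gamma$ and $\vartheta$. A conceptually clean device is to view the pair $(X,\gamma)$ as itself a controlled rough path with values in $\R^m\times\R^k$ relative to $\eta$, with Gubinelli derivative $(X',0)$ and remainder $(R^X_{s,t},\gamma_{s,t})$, since $\gamma\in\Cptvar$ is trivially controlled with zero derivative; similarly $(Y,\vartheta)$ is controlled by $\zeta$. Then $\tilde\psi(x,a):=\psi(x,a)$ lies in $C^3_b$ on the product space, so the argument of \cite[Lemma~3.5]{FrizZhang2018} applies almost verbatim, with the extra terms $\|\gamma-\vartheta\|_\infty$ and $\|\gamma-\vartheta\|_{\frac{p}{2}}$ arising from bounding the difference $(R^X-R^Y,\gamma-\vartheta)$ in the natural product norm on $\R^m\times\R^k$.

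For the Gubinelli derivative estimate, I would decompose
\begin{equation*}
\psi(X,\gamma)'-\psi(Y,\vartheta)' = \bigl[D_x\psi(X,\gamma)-D_x\psi(Y,\vartheta)\bigr]X' + D_x\psi(Y,\vartheta)\bigl[X'-Y'\bigr]
\end{equation*}
and apply the standard product estimate $\|fg\|_p\leq\|f\|_\infty\|g\|_p+\|f\|_p\|g\|_\infty$ to each term. Boundedness and Lipschitz continuity of $D_x\psi$ (from $\psi\in C^3_b$), combined with the a priori bound $|X'_0|+\|X'\|_p\leq M$, control the required suprema and $p$-variations. The variation of $D_x\psi(X,\gamma)-D_x\psi(Y,\vartheta)$ is handled by the ``chain rule for differences'', writing it as an integral of $D^2\psi$ along a linear interpolation between $(X,\gamma)$ and $(Y,\vartheta)$; this produces exactly the mix of terms $|X_0-Y_0|$, $\|X'-Y'\|_p$, $\|R^X-R^Y\|_{\frac{p}{2}}$, $\|\gamma-\vartheta\|_\infty$, $\|\gamma-\vartheta\|_{\frac{p}{2}}$ and $\varrho_p(\bme,\bmz)$ appearing on the right-hand side.

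For the remainder estimate, I would Taylor expand
\begin{equation*}
\psi(X_t,\gamma_t)-\psi(X_s,\gamma_s) = D_x\psi(X_s,\gamma_s)X_{s,t} + D_a\psi(X_s,\gamma_s)\gamma_{s,t} + r^\psi_{s,t},
\end{equation*}
with quadratic Taylor remainder $|r^\psi_{s,t}|\lesssim|X_{s,t}|^2+|\gamma_{s,t}|^2$, and substitute $X_{s,t}=X'_s\eta_{s,t}+R^X_{s,t}$ to obtain $R^{\psi(X,\gamma)}_{s,t}=D_x\psi(X_s,\gamma_s)R^X_{s,t}+D_a\psi(X_s,\gamma_s)\gamma_{s,t}+r^\psi_{s,t}$. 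The analogous identity for $R^{\psi(Y,\vartheta)}$ lets one split $R^{\psi(X,\gamma)}_{s,t}-R^{\psi(Y,\vartheta)}_{s,t}$ into a handful of terms, each of which I would estimate in $\frac{p}{2}$-variation using Lipschitz/boundedness of the derivatives of $\psi$, the a priori bounds on $\|R^X\|_{\frac{p}{2}}$, $\|R^Y\|_{\frac{p}{2}}$, $\|\gamma\|_{\frac{p}{2}}$, $\|\vartheta\|_{\frac{p}{2}}$, and the expansion $X_{s,t}-Y_{s,t}=(X'-Y')_s\zeta_{s,t}+X'_s(\eta-\zeta)_{s,t}+R^X_{s,t}-R^Y_{s,t}$. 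The factor $\|\zeta\|_p$ multiplying $\|X'-Y'\|_p$ in the stated bound arises precisely from this expansion via the term $(X'-Y')_s\zeta_{s,t}$, which appears when bounding the Taylor remainder difference and the term $D_x\psi(Y,\vartheta)[X-Y]$ in $p$-variation.

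The main obstacle is combinatorial rather than conceptual: one must carefully separate the numerous cross-terms in the remainder estimate and assign each contribution to the correct term on the right-hand side, in particular so that the factor $\|\zeta\|_p$ attaches only to $\|X'-Y'\|_p$ and nowhere else. No fundamentally new technique beyond \cite{FrizZhang2018} is required, but attention to the role of the extra control paths $\gamma,\vartheta$ throughout the calculation is essential.
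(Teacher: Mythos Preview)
The paper does not supply its own proof of this lemma: it is stated with the attribution ``Lemma~3.5 in \cite{FrizZhang2018}'' and used as a black box in the appendix. Your approach---embedding $(X,\gamma)$ as a controlled rough path in $\R^m\times\R^k$ with Gubinelli derivative $(X',0)$ and remainder $(R^X_{s,t},\gamma_{s,t})$, then invoking the original lemma from \cite{FrizZhang2018}---is correct and is the standard way to absorb the extra control variable; the direct Taylor-expansion argument you sketch is likewise valid and amounts to unfolding that same proof by hand.
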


\begin{proof}[Proof of Theorem~\ref{thmsolnofRDE}]
The following argument is adapted from the proof of Theorem~3.8 in \cite{FrizZhang2018}. Let $L > 0$ be such that $\ver{\bmz}_p \leq L$. We define a map $\cM^\gamma_T \colon \sD^p_\zeta \to \sD^p_\zeta$ by
$$\cM^\gamma_T(X,X') := \bigg(x + \int_0^\cdot b(X_s,\gamma_s)\,\rd s + \int_0^\cdot\lambda(X_s,\gamma_s)\,\rd\bmz_s,\,\lambda(X,\gamma)\bigg).$$
We will show that this map has a unique fixed point. For $\delta \geq 1$, we define the ball
$$\cB^{(\delta)}_T := \Big\{(X,X') \in \sD_\zeta^{p}([0,T];\R^m) : (X_0,X'_0) = (x,\lambda(x,\gamma_0)),\, \|X,X'\|_{\zeta,p}^{(\delta)} \leq 1\Big\},$$
where
$$\|X,X'\|_{\zeta,p}^{(\delta)} := \|X'\|_p + \delta\big\|R^X\big\|_{\frac{p}{2}}.$$
We will show that, for a suitable choice of $\delta$ and for $T$ sufficiently small, $\cM^\gamma_T$ leaves $\cB^{(\delta)}_T$ invariant, and then that it is a contraction on $\cB^{(\delta)}_T$.

By Lemma~\ref{lemmainvariancebounds}, any $(X,X') \in \cB^{(\delta)}_T$ satisfies
\begin{align*}
\big\|\cM^\gamma_T(X,X')\big\|_{\zeta,p}^{(\delta)} &\leq \big\|\lambda(X,\gamma)\big\|_p + \delta\bigg\|\int_0^\cdot b(X_s,\gamma_s)\,\rd s\bigg\|_{\frac{p}{2}} + \delta\Big\|R^{\int_0^\cdot\lambda(X_s,\gamma_s)\,\rd\bmz_s}\Big\|_{\frac{p}{2}}\\
&\leq C_1\bigg(\|\gamma\|_{\ptvarzT} + \delta\ver{\bmz}_{\pvarzT} + \delta T + \frac{1}{\delta}\bigg)
\end{align*}
for some constant $C_1 \geq \frac{1}{2}$ depending only on $b,\lambda,p,L$ and $\|\gamma\|_{\frac{p}{2}}$. Let $\delta = \delta_1 := 2C_1 \geq 1$, so that
$$\big\|\cM^\gamma_T(X,X')\big\|_{\zeta,p}^{(\delta_1)} \leq C_1\Big(\|\gamma\|_{\ptvarzT} + 2C_1\ver{\bmz}_{\pvarzT} + 2C_1T\Big) + \frac{1}{2}.$$
Hence, taking $T = T_1$ sufficiently small, we can ensure that $\|\cM^\gamma_{T_1}(X,X')\|_{\zeta,p}^{(\delta_1)} \leq 1$, so that $\cM^\gamma_{T_1}(X,X') \in \cB^{(\delta_1)}_{T_1}$. That is, $\cB^{(\delta_1)}_{T_1}$ is invariant under $\cM^\gamma_{T_1}$.

Let $(X,X'),(Y,Y') \in \cB^{(\delta_1)}_T$ for some $T \leq T_1$. For any (new) $\delta \geq 1$ we have
\begin{align*}
\big\|\cM^\gamma_T(X,X') - &\cM^\gamma_T(Y,Y')\big\|_{\zeta,p}^{(\delta)} \leq \delta\bigg\|\int_0^\cdot b(X_s,\gamma_s)\,\rd s - \int_0^\cdot b(Y_s,\gamma_s)\,\rd s\bigg\|_{\frac{p}{2}}\\
&\qquad + \big\|\lambda(X,\gamma) - \lambda(Y,\gamma)\big\|_p + \delta\Big\|R^{\int_0^\cdot\lambda(X_s,\gamma_s)\,\rd\bmz_s} - R^{\int_0^\cdot\lambda(Y_s,\gamma_s)\,\rd\bmz_s}\Big\|_{\frac{p}{2}}\\
&\leq C\bigg(\delta T\|X - Y\|_\infty + \big\|R^{\lambda(X,\gamma)} - R^{\lambda(Y,\gamma)}\big\|_{\frac{p}{2}}\\
&\qquad \quad + \delta\Big(\big\|\lambda(X,\gamma)' - \lambda(Y,\gamma)'\big\|_p + \big\|R^{\lambda(X,\gamma)} - R^{\lambda(Y,\gamma)}\big\|_{\frac{p}{2}}\Big)\ver{\bmz}_p\bigg).
\end{align*}
for some constant $C$ depending on $b,p$ and $L$, where we used the result of Lemma~\ref{lemmacontractionbounds} to obtain the last line.

We can take $M > 0$, dependent only on $\lambda$ and $\|\gamma\|_{\frac{p}{2}}$, sufficiently large such that $$\|\gamma\|_{\frac{p}{2}} \leq M \qquad \text{and} \qquad |X'_0| + \|X'\|_p + \big\|R^X\big\|_{\frac{p}{2}} \leq M$$
for all $(X,X') \in \cB^{(\delta_1)}_T$. Noting that $\|X - Y\|_\infty \leq \|R^X - R^Y\|_{\frac{p}{2}}$ and applying the estimates in Lemma~\ref{lemmacompwithsmoothfuncbounds}, we then deduce that
\begin{align*}
\big\|\cM^\gamma_T(X,X'&) - \cM^\gamma_T(Y,Y')\big\|_{\zeta,p}^{(\delta)}\\
&\leq C_2\bigg(\big\|R^X - R^Y\big\|_{\frac{p}{2}} + \delta\Big(\|X' - Y'\|_p + \big\|R^X - R^Y\big\|_{\frac{p}{2}}\Big)\big(\ver{\bmz}_{\pvarzT} + T\big)\bigg),
\end{align*}
for a new constant $C_2 > \frac{1}{2}$ which depends only on $b,\lambda,p,L$ and $M$. Let $\delta = \delta_2 := 2C_2 > 1$. We can then choose $T = T_2 \leq T_1$ sufficiently small such that $C_2\delta_2(\ver{\bmz}_{p;[0,T_2]} + T_2) \leq \frac{1}{2}$. We then have that
\begin{align*}
\big\|\cM^\gamma_{T_2}(X,X') - \cM^\gamma_{T_2}(Y,Y')\big\|_{\zeta,p}^{(\delta_2)} &\leq \frac{1}{2}\|X' - Y'\|_p + \frac{\delta_2 + 1}{2}\big\|R^X - R^Y\big\|_{\frac{p}{2}}\\
&\leq \frac{\delta_2 + 1}{2\delta_2}\big\|(X,X') - (Y,Y')\big\|_{\zeta,p}^{(\delta_2)},
\end{align*}
which establishes the contraction property for $\cM^\gamma_{T_2}$.

It follows that there exists a unique fixed point $(X,X') \in \sD_\zeta^{p}$ of the map $\cM^\gamma_{T_2}$, which is then the unique solution of \eqref{eq:RDEforXthm} in $\sD_\zeta^{p}$ satisfying $X' = \lambda(X,\gamma)$ over the time interval $[0,T_2]$. Noting that the time $T_2$ was chosen independently of the initial values $x$, $\gamma_0$, we may then simply paste solutions together to obtain a unique solution over the entire interval $[0,T]$ for any given $T > 0$.
\end{proof}

\begin{proof}[Proof of Proposition~\ref{proproughstability}]
Since $\|\gamma\|_{\frac{p}{2}}$ is bounded by $M$, it follows from Proposition~\ref{proproughestimates} and the fact that $X' = \lambda(X,\gamma)$, that there exists an $\tilde{M} > 0$, depending on $b,\lambda,p,T,L$ and $M$, such that the norms
$$\|\gamma\|_{\frac{p}{2}},\ \|X\|_p,\ |X'_0|,\ \|X'\|_p,\ \big\|R^X\big\|_{\frac{p}{2}},\ \big|\lambda(X,\gamma)'_0\big|,\ \big\|\lambda(X,\gamma)'\big\|_p,\ \big\|R^{\lambda(X,\gamma)}\big\|_{\frac{p}{2}},$$
and the same with $X$ and $\gamma$ replaced by $Y$ and $\vartheta$, are all bounded by $\tilde{M}$. In particular we note that the hypotheses of Lemma~\ref{lemmacompwithsmoothfuncbounds} are satisfied. In the following the symbol $\lesssim$ will denote inequality up to a multiplicative constant which may depend on $b,\lambda,\psi,p,T,L$ and $\tilde{M}$.

For any $\delta \geq 1$, we have
\begin{align*}
\|X' - &Y'\|_p + \delta\big\|R^X - R^Y\big\|_{\frac{p}{2}}\\
&\lesssim \delta\bigg\|\int_0^\cdot b(X_s,\gamma_s)\,\rd s - \int_0^\cdot b(Y_s,\vartheta_s)\,\rd s\bigg\|_{\frac{p}{2}}\\
&\hspace{35pt} + \big\|\lambda(X,\gamma) - \lambda(Y,\vartheta)\big\|_p + \delta\Big\|R^{\int_0^\cdot\lambda(X_s,\gamma_s)\,\rd\bme_s} - R^{\int_0^\cdot\lambda(Y_s,\vartheta_s)\,\rd\bmz_s}\Big\|_{\frac{p}{2}}.
\end{align*}
Since the drift $b$ is Lipschitz, it is easy to see that
\begin{align*}
\bigg\|\int_0^\cdot b(X_s,\gamma_s)\,\rd s - \int_0^\cdot b(Y_s,\vartheta_s)\,\rd s\bigg\|_{\frac{p}{2}} &\lesssim \big(\|X - Y\|_\infty + \|\gamma - \vartheta\|_\infty\big)T\\
&\leq \big(|x - y| + \|X - Y\|_p + \|\gamma - \vartheta\|_\infty\big)T.
\end{align*}
As $(X,X') = (X,\lambda(X,\gamma)) \in \sD^p_\eta$ and $(Y,Y') = (Y,\lambda(Y,\vartheta)) \in \sD^p_\zeta$, we have that
\begin{align}
\|X - Y\|_p &\leq \|X'\|_\infty\|\eta - \zeta\|_p + \|X' - Y'\|_\infty\|\zeta\|_p + \|R^X - R^Y\|_{\frac{p}{2}}\nonumber\\
&\lesssim \|\eta - \zeta\|_p + |x - y| + \|\gamma - \vartheta\|_\infty + \|X' - Y'\|_p + \|R^X - R^Y\|_{\frac{p}{2}}.\label{eq:XYpbasicbound}
\end{align}
Combining the results of Lemmas~\ref{lemmacontractionbounds} and \ref{lemmacompwithsmoothfuncbounds}, we then deduce that
\begin{align*}
\|X' &- Y'\|_p + \delta\big\|R^X - R^Y\big\|_{\frac{p}{2}}\\
&\leq C_0\bigg(|x - y| + \delta\varrho_p(\bme,\bmz) + \big\|R^X - R^Y\big\|_{\frac{p}{2}} + \|\gamma - \vartheta\|_\infty + \|\gamma - \vartheta\|_{\frac{p}{2}}\\
&\hspace{35pt} + \delta\Big(|x - y| + \|X' - Y'\|_p + \big\|R^X - R^Y\big\|_{\frac{p}{2}}\\
&\hspace{130pt}+ \|\gamma - \vartheta\|_\infty + \|\gamma - \vartheta\|_{\frac{p}{2}}\Big)\Big(\ver{\bmz}_{\pvarzT} + T\Big)\bigg),
\end{align*}
for some constant $C_0 > \frac{1}{2}$ which depends on $b,\lambda,p,T,L$ and $\tilde{M}$.

Let $\delta = \delta_0 := 2C_0 > 1$. We can then take $T = T_0$ (depending only on $p,L$ and $C_0$) sufficiently small such that
$$C_0\delta_0\Big(\ver{\bmz}_{p;[0,T_0]} + T_0\Big) \leq C_0\delta_0\Big(\|\zeta\|_{\opHol}T_0^{\frac{1}{p}} + \big\|\zeta^{(2)}\big\|_{\tpHol}T_0^{\frac{2}{p}} + T_0\Big) \leq \frac{1}{2},$$
so that, after rearranging, we obtain
\begin{align*}
\|X' - Y'&\|_p + (\delta_0 - 1)\big\|R^X - R^Y\big\|_{\frac{p}{2}}\\
&\leq (\delta_0 + 1)\Big(|x - y| + \|\gamma - \vartheta\|_\infty + \|\gamma - \vartheta\|_{\frac{p}{2}}\Big) + \delta_0^2\varrho_p(\bme,\bmz).
\end{align*}
It follows that the estimate in \eqref{eq:roughstability} holds over any time interval of length $T_0$. One can then extend this estimate to hold over the union of any finite number of such intervals (with a correspondingly larger constant $C$) by pasting via Lemma~\ref{lemmapvarpartitionbound}.

The bound in \eqref{eq:XYpbasicbound} also holds with $X$ and $Y$ replaced with $\int_0^\cdot\psi(X_s,\gamma_s)\,\rd\bme_s$ and $\int_0^\cdot\psi(Y_s,\vartheta_s)\,\rd\bmz_s$ respectively, so that
\begin{align*}
\bigg\|\int_0^\cdot\psi(X_s,\gamma_s)\,\rd\bme_s - \int_0^\cdot\psi(Y_s,\vartheta_s)\,\rd&\bmz_s\bigg\|_p \lesssim \|\eta - \zeta\|_p + |x - y| + \big\|\psi(X,\gamma) - \psi(Y,\vartheta)\big\|_p\\
&+ \|\gamma - \vartheta\|_\infty + \Big\|R^{\int_0^\cdot\psi(X_s,\gamma_s)\,\rd\bme_s} - R^{\int_0^\cdot\psi(Y_s,\vartheta_s)\,\rd\bmz_s}\Big\|_{\frac{p}{2}}.
\end{align*}
Applying again the results of Lemmas~\ref{lemmacontractionbounds} and \ref{lemmacompwithsmoothfuncbounds}, this time with $\delta = 1$, we deduce that
\begin{align*}
\bigg\|\int_0^\cdot&\psi(X_s,\gamma_s)\,\rd\bme_s - \int_0^\cdot\psi(Y_s,\vartheta_s)\,\rd\bmz_s\bigg\|_p\\
&\lesssim |x - y| + \|X' - Y'\|_p + \big\|R^X - R^Y\big\|_{\frac{p}{2}} + \|\gamma - \vartheta\|_\infty + \|\gamma - \vartheta\|_{\frac{p}{2}} + \varrho_p(\bme,\bmz).
\end{align*}
Combining this with \eqref{eq:roughstability}, we obtain \eqref{eq:roughstabilityintegral}.
\end{proof}

\bibliographystyle{abbrv}
\bibliography{References_Andy}

\end{document}